\newtheorem{theorem}{Theorem}[section]
\newtheorem{prop}[theorem]{Proposition}
\newtheorem{problem}[theorem]{Problem}
\newtheorem{lemma}[theorem]{Lemma}
\newenvironment{proof}{\prepf\rm}{\endprepf}
\newcommand{\qed}{\hfill$\Box$}
\newenvironment{conj}{\preconj\rm}{\endpreconj}
\newenvironment{qn}{\preqn\rm}{\endpreqn}
\newenvironment{defn}{\predef\rm}{\endpredef}
\newcommand{\pom}{\mathop{\mathrm{P}\Omega}}
\newcommand{\agl}{\mathop{\mathrm{AGL}}}
\newcommand{\asl}{\mathop{\mathrm{ASL}}}
\newcommand{\psl}{\mathop{\mathrm{PSL}}}
\newcommand{\psigmal}{\mathop{\mathrm{P\Sigma L}}}
\newcommand{\pgl}{\mathop{\mathrm{PGL}}}
\newcommand{\pgaml}{\mathop{\mathrm{P}\Gamma\mathrm{L}}}
\newcommand{\agaml}{\mathop{\mathrm{A}\Gamma\mathrm{L}}}
\newcommand{\psigl}{\mathop{\mathrm{P}\Sigma\mathrm{L}}}
\newcommand{\M}{\mathord{\mathrm{M}}}
\newcommand{\Co}{\mathord{\mathrm{Co}}}
\newcommand{\Sz}{\mathop{\mathrm{Sz}}}
\newcommand{\HS}{\mathord{\mathrm{HS}}}
\newcommand{\Sp}{\mathop{\mathrm{Sp}}}
\newcommand{\GF}{\mathop{\mathrm{GF}}}
\newcommand{\sym}{S_n}
\newcommand{\alt}{A_n}
\begin{document}

\title{Primitive groups, road closures, and idempotent generation}
\author{Jo\~ao Ara\'ujo and Peter J. Cameron}
\date{}
\maketitle

\begin{abstract}
We are interested in semigroups of the form $\langle G,a\rangle\setminus G$,
where $G$ is a permutation group of degree $n$ and $a$ a non-permutation on
the domain of $G$. A theorem of the first author, Mitchell and Schneider
shows that, if this semigroup is idempotent-generated for all possible choices
of $a$, then $G$ is the symmetric or alternating group of degree $n$, with
three exceptions (having $n=5$ or $n=6$). Our purpose here is to prove stronger
results where we assume that $\langle G,a\rangle\setminus G$ is 
idempotent-generated for all maps of fixed rank $k$. For $k\ge6$ and
$n\ge2k+1$, we reach the same conclusion, that $G$ is symmetric or alternating.
These results are proved using a stronger version of the
\emph{$k$-universal transversal property} previously considered by the authors.

In the case $k=2$, we show that idempotent generation of the semigroup for all
choices of $a$ is equivalent to a condition on the permutation group $G$,
stronger than primitivity, which we call the \emph{road closure condition}.
We cannot determine all the primitive groups with this property, but we give
a conjecture about their classification, and a body of evidence (both
theoretical and computational) in support of the conjecture. 

The paper ends with some problems.
\end{abstract}

\section{Introduction}

Let $S_n$ be the symmetric group and $T_n$ be the full transformation monoid on $n$ points. Let $T_{n,k}$, for $k\le n$, denote the set of rank $k$ transformations in $T_n$. Given a semigroup $S$ we denote by $E(S)$ its set of idempotents.

Our goal is
to classify the permutation groups that together with any rank $k$ map (for $1\le k\le n/2$) generate an idempotent generated semigroup of singular maps. Thus we aim at  classifying the groups $G\le S_n$ such that
\[
(\forall t\in T_{n,k})\ S:=\langle G,t\rangle\setminus G=\langle E(S)\rangle.
\]

We say that a group satisfying this condition  has the {\em $k$-id property}. This property is very difficult to work with and hence we introduce two auxiliary conditions, one necessary and the other sufficient. The first  is called the \emph{$k$-ut  property} (short for \emph{$k$-universal transversal property}), and is defined as follows: a primitive group possesses it if in the orbit of any $k$-set contained in $\Omega$ there is a transversal for every $k$-partition of $\Omega$. The second  is called the \emph{strong $k$-ut  property} and a primitive group possesses it if given any $(k+1)$-tuple $(a_1,\ldots,a_{k+1})$ of pairwise different elements of $\Omega$, and given any $k$-partition $P$ of $\Omega$, there exists $g\in G$ such that $\{a_1,\ldots,a_k\}g$ is a transversal for $P$, and $a_1g, a_{k+1}g$ belong to the same part of $P$.  We prove that the folowing implications hold (for $k\le n/2$):
\[
\mbox{$k$-homogeneity and strong $k$-ut}\Rightarrow \mbox{$k$-id}\Rightarrow \mbox{$k$-ut }.
\]

Modulo a few exceptional groups and families of groups, all groups with the $k$-ut property are $k$-homogeneous. Therefore, we expected the gap between the smallest and largest (more tractable) classes above to be very small; and the property of interest, though less tractable, would be within this gap. In addition, we have  the classification of groups with the $k$-ut property \cite{ac_tams}. Giving the general picture, the prospects of success in classifying the groups possessing the $k$-id property seemed high, but reality turned out to be much more interesting! Even if the two extreme properties look very close to each other, the fact is that this is not enough to decide all groups, with the case  $k=3$ standing out as particularly difficult. 

The approach outlined above worked pretty  well for the case of $k\ge 4$ yielding the following results:

\begin{theorem}
Let $G$ be a permutation group of degree $n$.
\begin{enumerate}
\item Suppose that $k\ge6$ and $n\ge2k+1$. Then $G$ has the $k$-id property
if and only if $G$ is $S_n$ or $A_n$.
\item Suppose that $n\ge11$. Then $G$ has the $5$-id property if and only if
$G$ is $S_n$, $A_n$, $M_{12}$ (with $n=12$) or $M_{24}$ (with $n=24$),
or possibly $\pgaml(2,32)$ (with $n=33$).
\item Suppose that $n\ge11$. Then $G$ has the $4$-id property if and only if
$G$ is $S_n$, $A_n$, $M_n$ (with $n=11,12,23,24$), or possibly 
$\psl(2,q)\le G\le\pgaml(2,q)$ with either $q$ prime congruent to
$11\pmod{12}$ or $q=2^p$ with $p$ prime, or $G=M_{11}$ with $n=12$.
\end{enumerate}
\end{theorem}

We also classified the groups with the $k$-id for degrees smaller than $11$, but the list is too long to be included here.
 
\medskip

The case $k=2$ is not amenable to this approach (in part since the $2$-ut
property is equivalent to primitivity). So we took a different approach, as
outlined in the next two results.

\begin{theorem}\label{x}
Let $G$ be a primitive group acting on $\Omega$, and $t$ a rank $2$ map. Then $\langle G, t\rangle\setminus G$ is idempotent generated if and only the bipartite graph whose vertices are elements in the orbits of $\ker(t)$ and $\Omega t$, with a set $S$ and a partition $P$ forming an edge whenever $S$ is a transversal for $P$, is connected.
\end{theorem}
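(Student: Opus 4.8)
The plan is to analyse the semigroup $S:=\langle G,t\rangle\setminus G$ directly. Since $t$ has rank $2$, every element of $S$ has rank $1$ or $2$, and the rank-$1$ elements form an ideal $S^{(1)}$. For a rank-$2$ element of the form $gtg'$ with $g,g'\in G$ one has $\Omega(gtg')=(\Omega t)g'$ and $\ker(gtg')=(\ker t)g^{-1}$, so the image of any rank-$2$ element of $S$ lies in the $G$-orbit $\mathcal{S}$ of $\Omega t$, its kernel lies in the $G$-orbit $\mathcal{P}$ of $\ker t$, and these two data may be prescribed independently. A rank-$2$ idempotent is determined by its image $A$ and kernel $P$, and exists exactly when $A$ is a transversal of $P$; moreover if $A\in\mathcal{S}$, $P\in\mathcal{P}$ and $A$ is a transversal of $P$ then the corresponding idempotent $e_{A,P}$ equals $(gtg')^2$ for suitable $g,g'$ and so lies in $S$. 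Hence the rank-$2$ idempotents of $S$ are precisely the $e_{A,P}$ indexed by the edges $\{A,P\}$ of the graph $\mathcal{G}$ in the statement; and if $S$ has any rank-$1$ element then, by transitivity, $S^{(1)}$ consists of all constant maps, each of which is idempotent.

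For the ``only if'' direction I argue contrapositively. The key observation is that a product of rank-$2$ idempotents that still has rank $2$ is governed by $\mathcal{G}$: if $e_{A,P}e_{A',P'}$ has rank $2$ then $A$ is a transversal of $P'$ and the product has image $A'$ and kernel $P$, so $A,P,A',P'$ all lie in one connected component of $\mathcal{G}$; by induction every rank-$2$ element of $\langle E(S)\rangle$ has its image and kernel in a common component (a product involving a rank-$1$ idempotent has rank $1$). Now suppose $\mathcal{G}$ is disconnected. Then there are $A\in\mathcal{S}$ and $P\in\mathcal{P}$ lying in distinct components (otherwise, since $\mathcal{S}$ and $\mathcal{P}$ are nonempty, all vertices would lie in one component). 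Choosing $g,g'\in G$ with $(\ker t)g^{-1}=P$ and $(\Omega t)g'=A$, the map $gtg'$ belongs to $S$, has rank $2$, and has its image and kernel in different components of $\mathcal{G}$; hence it is not a product of idempotents, and $S\neq\langle E(S)\rangle$.

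For the ``if'' direction, assume $\mathcal{G}$ is connected. The rank-$1$ part is immediate: either $S^{(1)}$ is empty, or it consists of constant maps, all idempotent. So it remains to show that every rank-$2$ element $f\in S$, with image $B$ and kernel $Q$, is a product of idempotents. Neither $B$ nor $Q$ is isolated in $\mathcal{G}$ (otherwise, by transitivity, $\mathcal{G}$ would have no edges), so we may take a path in $\mathcal{G}$ from $Q$ to $B$ and multiply the idempotents $e_{A_i,P_i}$ along it; the result is a product of idempotents with image $B$ and kernel $Q$, lying in the same $\mathcal{H}$-class $H_{B,Q}$ as $f$. This $\mathcal{H}$-class has at most two elements --- $e_{B,Q}$ and its ``twist'' when $\{B,Q\}$ is an edge, or two maps squaring into $S^{(1)}$ otherwise --- so it only remains to realise the second element of $H_{B,Q}$ as a product of idempotents whenever it lies in $S$. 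This is done by an identity of the form $e_{B,Q}\,e_{A',P'}\,e_{B,Q}$ (or an analogous three-factor product passing through a non-edge $\mathcal{H}$-class): such a product equals the ``twisted'' map exactly when the relevant transversals ``cross'', and one uses connectedness of $\mathcal{G}$ and transitivity of $G$ on $\mathcal{S}$ and $\mathcal{P}$ to produce a crossing configuration whenever the twisted map is in $S$ at all.

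The step I expect to be the main obstacle is exactly this last one: the bookkeeping of the two-element group $\mathcal{H}$-classes, namely deciding which of the two rank-$2$ maps with a prescribed (image, kernel) actually belongs to $S$, and then recovering it from idempotent products. A clean way to organise this is to note that $S$ modulo its minimal ideal is a completely $0$-simple (Rees matrix) semigroup whose structure graph is $\mathcal{G}$ and whose group is a subgroup of $S_2$, and to apply the classical criterion that such a semigroup is idempotent-generated if and only if its structure graph is connected and the group part is recovered by idempotent products --- the group here being small enough that connectedness alone suffices. Everything else --- the description of $S$ and its idempotents, and the component invariant used in the ``only if'' direction --- is routine.
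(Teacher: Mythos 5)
Your setup---the rank-$1$ ideal, the completely $0$-simple quotient on the rank-$2$ elements, the identification of its structure graph with $\mathcal{G}$, and the component-invariant argument for the ``only if'' direction---matches the paper's, which organises the same material via Graham normal form and Gray's criterion. The ``only if'' direction is correct as you give it.

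The gap is exactly where you predicted it, and neither of the two devices you offer closes it. For a connected Rees matrix semigroup over $S_2$ in Graham normal form, connectedness does \emph{not} imply idempotent generation: if every nonzero sandwich-matrix entry normalises to the identity, the idempotent-generated part is a proper subsemigroup missing the ``twist'' in every group $\mathcal{H}$-class (Gray's criterion requires, in addition to connectedness, that the nonzero entries generate the group, and the all-identity matrix over $S_2^0$ shows this second condition is not automatic). So ``the group here being small enough that connectedness alone suffices'' is false as a general principle; and your alternative---``one uses connectedness of $\mathcal{G}$ and transitivity of $G$ on $\mathcal{S}$ and $\mathcal{P}$ to produce a crossing configuration whenever the twisted map is in $S$ at all''---is an assertion, not an argument: transitivity of $G$ on the two orbits is automatic and does not produce a crossing. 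This is precisely the hard half of the theorem, and it is where primitivity (not mere transitivity) must enter. The paper handles it by defining relations $\sim_1,\sim_2$ on $\Omega$ that record when two points can be ``crossed'' against a translate of $\ker t$ by pairs from the orbit of $\Omega t$, using primitivity to show that one of these relations is non-trivial and that its orbit graph is connected, and then chaining crossings along a $\sim$-path into a closed polygonal line violating the Lallement--Petrich characterisation of $\{0,1\}$-matrices; this forces a non-identity entry in the sandwich matrix, so the entries generate $S_2$. Without an argument of this kind, your proof establishes only that \emph{one} of the two rank-$2$ maps with each admissible image--kernel pair is a product of idempotents.
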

 Observe that this result does not deal with the stronger property of $2$-id; rather, it is about any primitive group and any rank $2$ map. A classification of the pairs $(G,t)$ so that $\langle G,t\rangle \setminus G$ is idempotent generated is probably beyond reach now, but at least the theorem above says where to look for. 
 
\begin{theorem}\label{roadc}\label{t:main}
Let $G$ be a finite transitive permutation group on $\Omega$. The following
two conditions are equivalent:
\begin{enumerate}\itemsep0pt
\item $G$ has the $2$-id property;
\item for every orbit $O$ of $G$ on $2$-sets of $\Omega$, and every maximal
block of imprimitivity $B$ for $G$ acting on $O$, the graph with vertex set
$\Omega$ and edge set $O\setminus B$ is connected.
\end{enumerate}
\end{theorem}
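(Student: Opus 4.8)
The plan is to deduce this from Theorem~\ref{x} and the correspondence between blocks of imprimitivity of $G$ on an orbit of $2$-sets and overgroups of a point stabiliser. First one reduces to the primitive case: if $G$ is transitive but imprimitive then both conditions fail, since (b) fails because the orbit $O$ of a $2$-set contained in a nontrivial block of imprimitivity consists of pairs lying inside blocks, so $(\Omega,O)$, and hence $(\Omega,O\setminus B)$ for every block $B$, is disconnected; while (a) fails because a group with the $2$-id property has the $2$-ut property, hence is primitive. So assume $G$ is primitive; then every orbital graph $(\Omega,O)$ on an orbit $O$ of $2$-sets is connected. Next observe that a rank-$2$ map $t$ is just a choice of $2$-partition $\ker t$ together with a $2$-set $\Omega t$, with no constraint linking them, and that the graph $\Gamma_t$ of Theorem~\ref{x} depends only on the orbit $O$ of $\Omega t$ and the orbit $\mathcal P$ of $\ker t$. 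Write $\Gamma(O,\mathcal P)$ for the bipartite graph on $O\sqcup\mathcal P$ joining $S\in O$ to $P=\{A,\bar A\}\in\mathcal P$ whenever $S$ is a transversal of $P$, i.e.\ whenever the edge $S$ of $(\Omega,O)$ crosses the cut $(A,\bar A)$. By Theorem~\ref{x}, $G$ has the $2$-id property if and only if $\Gamma(O,\mathcal P)$ is connected for every orbit $O$ on $2$-sets and every orbit $\mathcal P$ on $2$-partitions. I record for later that $\Gamma(O,\mathcal P)$ has no isolated vertex: a $2$-partition $\{A,\bar A\}$ crossed by no edge of $O$, or a $2$-set of $O$ crossing no member of $\mathcal P$, would make $A$ a union of connected components of the connected graph $(\Omega,O)$, forcing $A\in\{\emptyset,\Omega\}$.

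Suppose first that (b) fails: there are an orbit $O$ on $2$-sets, a maximal block $B$ of $G$ on $O$, and a connected component, with vertex set $C$, of $(\Omega,O\setminus B)$. Put $P=\{C,\bar C\}$ and $\mathcal P=P^G$. Since no edge of $O\setminus B$ leaves $C$, every edge of $O$ crossing $(C,\bar C)$ lies in $B$, and such an edge exists because $(\Omega,O)$ is connected; writing $\partial X\subseteq O$ for the set of edges of $O$ crossing $(X,\bar X)$, we have $\emptyset\ne\partial C\subseteq B$, and $\partial(Cg)=(\partial C)g$ for $g\in G$. Hence $Pg=P$ forces $(\partial C)g=\partial C\subseteq B\cap Bg$, so $Bg=B$; thus $Pg\mapsto Bg$ defines a $G$-equivariant surjection from $\mathcal P$ onto the block system $\{Bg:g\in G\}$, which has at least two members. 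In $\Gamma(O,\mathcal P)$ a vertex $P'\in\mathcal P$ is adjacent only to edges lying in the single block it is sent to, so each edge is adjacent only to those $2$-partitions sent to the block containing it; hence $\Gamma(O,\mathcal P)$ is the disjoint union of the nonempty subgraphs indexed by the blocks $Bg$, and is disconnected. By Theorem~\ref{x}, applied to a rank-$2$ map with kernel $P$ and image any member of $O$, $G$ fails the $2$-id property, so (a) fails.

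Conversely, suppose (a) fails, so by Theorem~\ref{x} some $\Gamma(O,\mathcal P)$ is disconnected. Having no isolated vertices, it has a connected component meeting both sides; let its $O$-part be $O_0$ and its $\mathcal P$-part be $\mathcal P_0$, both nonempty, with $O_0\subsetneq O$. The $O$-parts of the components form a block system for $G$ acting on $O$, so $O_0$ is a block whose stabiliser in $G$ is proper; choose a maximal subgroup $M$ of $G$ containing $\mathrm{Stab}_G(O_0)$; then the $M$-orbit $B$ of a point of $O_0$ is a maximal block of $G$ on $O$ with $O_0\subseteq B$. Pick $P'=\{A',\bar{A'}\}\in\mathcal P_0$. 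Any edge of $O$ crossing $(A',\bar{A'})$ is adjacent to $P'$ in $\Gamma(O,\mathcal P)$, hence lies in $O_0\subseteq B$; therefore no edge of $O\setminus B$ crosses the cut $(A',\bar{A'})$, so $(\Omega,O\setminus B)$ is disconnected and (b) fails. Together with the reduction and the previous paragraph, this establishes the equivalence.

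The work here is largely bookkeeping rather than conceptual: the care needed is in not confusing a block of $G$ on the orbit $O$ of $2$-sets (a quotient of $G$, not of $\Omega$) with a $2$-partition in $\mathcal P$, and in checking that $\Gamma(O,\mathcal P)$ genuinely decomposes along the block system $\{Bg\}$ in the first implication. The one step that is not purely formal is inflating the block $O_0$ coming from a component of $\Gamma(O,\mathcal P)$ up to a maximal block $B$ via a maximal overgroup of its stabiliser; the rest reduces to the definitions, Theorem~\ref{x}, and the block--subgroup correspondence, so I do not expect a serious obstacle.
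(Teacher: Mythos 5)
Your proof is correct and follows essentially the same route as the paper's: both combine Theorem~\ref{x} with the observation that the connected components of the Houghton graph determine a block system for $G$ on the orbit $O$, and, conversely, that a component $C$ of $(\Omega,O\setminus B)$ yields a partition $\{C,\overline{C}\}$ all of whose transversals in $O$ lie in $B$. The only real differences are cosmetic: the paper routes the argument through an auxiliary ``$2$-step Houghton graph'' on $O$ where you argue directly on the bipartite graph via the boundary map $\partial$, and you are somewhat more explicit than the paper about passing from the block produced by a component to a \emph{maximal} block via a maximal overgroup of its stabiliser.
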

 
The condition (b) in the above theorem is called the \emph{road closure property}, for reasons to be explained later.

Recall that a permutation group $G$ is:
\begin{itemize}
\item \emph{transitive}, if it preserves no non-empty proper subset of
$\Omega$;
\item \emph{primitive}, if it preserves no non-trivial partition of $\Omega$;
\item \emph{basic}, if it is primitive and also preserves no non-trivial
Cartesian power structure on $\Omega$.
\end{itemize}
The \emph{O'Nan--Scott Theorem} asserts, in part, that a basic primitive
group is affine, diagonal, or almost simple.
 
 \begin{theorem}\label{nonex1}
\begin{enumerate}\itemsep0pt
\item A transitive imprimitive group fails the road closure property.
\item A primitive non-basic group fails the road closure property.
\item A primitive group which has an imprimitive normal subgroup of index~$2$
fails the road closure property.
\item The primitive action of $\pom^+(8,q):S_3$ (described on page \pageref{trial})
fails the road closure property.
\end{enumerate}
\end{theorem}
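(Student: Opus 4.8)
The common strategy for all four parts is to exhibit a single $G$-orbit $O$ on $2$-subsets of $\Omega$, together with a maximal block $B$ of the transitive $G$-action on $O$, for which the graph $(\Omega,O\setminus B)$ is disconnected; the recurring mechanism is that $O\setminus B$ (or already $O$ itself) consists only of edges that stay inside the parts of some $G$-invariant partition of $\Omega$ into at least two parts, so that the graph is disconnected and stays so after deleting any further edges. For~(a), let $\mathcal{B}$ be a non-trivial block system of the transitive imprimitive group $G$, take $x\ne y$ in a common part, and put $O=\{x,y\}^G$; since $G$ preserves $\mathcal{B}$, every edge of $(\Omega,O)$ lies inside a part of $\mathcal{B}$, so $(\Omega,O)$ is disconnected and remains so after deleting any block. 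In the degenerate case where $G$ happens to act primitively on this particular $O$ one checks that the parts of $\mathcal{B}$ must then have size~$2$, and uses instead an orbit of $2$-sets lying at a common ``level'' (for instance, for $S_m\times S_2$ on $2m$ points, the pairs with equal second coordinate, whose edge set carries a $G$-block system with two blocks).

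For~(c), write $N\trianglelefteq G$ with $[G:N]=2$ and $N$ imprimitive, so $N$ preserves a non-trivial partition $\mathcal{B}$ of $\Omega$ (a block system if $N$ is transitive, its orbit partition otherwise). Since $G$ is primitive it does not preserve $\mathcal{B}$, and since $[G:N]=2$ the $G$-images of $\mathcal{B}$ are just $\mathcal{B}$ and $\mathcal{B}'=\mathcal{B}g$ (for $g\in G\setminus N$), each fixed by $N$ and interchanged by $G\setminus N$. Pick $x\ne y$ in a common part of $\mathcal{B}$ and set $O=\{x,y\}^G=\{x,y\}^N\cup\{x,y\}^{gN}$, the first orbit consisting of edges lying inside parts of $\mathcal{B}$ and the second of edges lying inside parts of $\mathcal{B}'$. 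If these two $N$-orbits coincide, then $(\Omega,O)$ is already disconnected and we finish as in~(a). Otherwise $\{\{x,y\}^N,\{x,y\}^{gN}\}$ is a $G$-block system on $O$ with two blocks, and the block $B:=\{x,y\}^N$ has setwise stabiliser $N$ in $G$; as $N$ has prime index it is maximal in $G$, so $B$ is a maximal block of $G$ on $O$. Since $O\setminus B=\{x,y\}^{gN}$ has all its edges inside parts of $\mathcal{B}'$, the graph $(\Omega,O\setminus B)$ is disconnected.

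For~(b), by the O'Nan--Scott theorem a primitive non-basic group preserves a Cartesian power structure, so we may identify $\Omega=\Delta^m$ with $m\ge 2$, $|\Delta|=\ell\ge 2$ and $G\le\Sym(\Delta)\wr S_m$ acting in product action. Take a Hamming edge $e=\{x,y\}$ (endpoints differing in exactly one coordinate), let $O=e^G$, and split $O=\bigsqcup_{i\in I}O_i$ according to the coordinate $i$ in which the endpoints differ, where $G$ transitively permutes the non-empty parts $O_i$. If $|I|=1$ then $(\Omega,O)$ is disconnected, all its edges lying inside a fibre of the projection that forgets the single relevant coordinate. If $|I|\ge 2$ then $\{O_i:i\in I\}$ is a proper $G$-block system on $O$; choose a maximal $G$-block system on $O$ coarsening it, and let $B$ be one of its blocks. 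Then $B$ is a union of direction-parts, $B=\bigcup_{j\in J}O_j$ with $\emptyset\ne J\subsetneq I$, so the edges of $O\setminus B$ involve no coordinate from $J$; fixing $i_0\in J$, all edges of $O\setminus B$ lie inside fibres of the projection onto the $i_0$-th coordinate, and hence $(\Omega,O\setminus B)$ has at least $\ell\ge 2$ connected components.

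Part~(d) is the one case that resists a soft block-system argument, and it is the main obstacle. Working from the explicit primitive action of $\pom^+(8,q){:}S_3$ described on page~\pageref{trial}, the plan is to locate a suborbit whose orbital graph $\Gamma=(\Omega,O)$ carries a non-trivial $G$-block system coming from the triality $S_3$ that permutes the three ``types'' of geometric object underlying the $D_4(q)$ configuration, to take as $B$ the maximal block stabilised setwise by the index-$2$ subgroup $\pom^+(8,q){:}3$, and to verify that $\Gamma-B$ splits along those three types. The verification is an explicit computation inside the $D_4(q)$ geometry (and can be confirmed by machine for small~$q$); the delicate points are selecting the right orbital and block and proving the disconnection uniformly in $q$, since here one cannot appeal to a generic $G$-invariant partition of $\Omega$ and must argue within the permutation action itself.
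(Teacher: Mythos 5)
Your arguments for (a), (b) and (c) are correct and essentially identical to the paper's: in each case one produces a $G$-orbit $O$ on $2$-sets and a block $B$ such that every edge of $O\setminus B$ lies inside a part of some nontrivial partition of $\Omega$, and then passes (implicitly or explicitly) to a maximal block containing $B$, which only removes further edges. A couple of your case distinctions are superfluous: in (a), if $(\Omega,O)$ is already disconnected then so is $(\Omega,O\setminus B)$ for \emph{every} maximal block $B$, so the road closure property fails without any need to switch to a second orbit; and in (c) the subgroup $N$ is automatically transitive, since the orbits of a normal subgroup of a primitive group form a block system.

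The genuine gap is part (d). What you offer there is a plan, not a proof: you do not specify the orbit $O$, you do not exhibit the block system, and you defer the disconnection to ``an explicit computation inside the $D_4(q)$ geometry'' that is never carried out, even raising a spurious worry about proving the result uniformly in $q$. The paper's argument is short and entirely soft, exactly parallel to (b) and (c): take $S=\{t,t'\}$, where $t=(p,\sigma,\sigma')$ and $t'$ are incident triples agreeing in two of their three positions. Since $H=\pom^+(8,q)$ preserves the three types of object while the triality $S_3$ permutes them, $SG$ splits into three $H$-orbits according to the position in which the two triples of a pair disagree, and these orbits are blocks for $G$. Deleting the block of pairs disagreeing in, say, the first position leaves only edges whose endpoints agree in the first position, so every connected component has constant first entry and the graph is disconnected; no computation and no dependence on $q$ enters. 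Note also that the block you propose --- one stabilised setwise by the index-$2$ subgroup $\pom^+(8,q){:}3$ --- is not the relevant one: that subgroup stabilises each of the three blocks, and the blocks of the system actually used have setwise stabiliser of index $3$ in $G$. So as written, part (d) is missing both the construction and the verification.
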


\begin{conj}
A primitive basic permutation group which does not satisfy condition (c) or (d)
of Theorem~\ref{nonex1} has the road closure property.
\end{conj}
We tried very hard to prove this conjecture (and posed it to some top experts in permutation groups too), but without success.  Nevertheless this conjecture seems very interesting, and is the last obstacle before the complete classification of groups with the $2$-id property. 

\medskip

The results above are a dramatic generalization of the following theorem whose context we explain below. 

\begin{theorem}\label{main}
If $n\geq 1$ and $G$ is a subgroup of $\sym$, then the following are equivalent:
\begin{enumerate}\itemsep0pt
\item[(i)]
The semigroup $\langle G,a\rangle \setminus G$ is idempotent generated for all $a\in T_n \setminus\sym$.
\item[(ii)] One of the following is valid for $G$ and $n$:
\begin{enumerate}\itemsep0pt
\item[(a)] $n=5$ and $G\cong \agl(1,5)$;
\item[(b)] $n=6$ and $G\cong \psl(2,5)$ or $\pgl(2,5)$;
\item[(c)] $G=A_n$ or $\sym$.
\end{enumerate}
\end{enumerate}
\end{theorem}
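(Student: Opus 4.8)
The equivalence has an easy half, (ii)$\Rightarrow$(i), and a hard half, (i)$\Rightarrow$(ii); the plan is to deduce the hard half, as far as possible, from the classification results proved above. For (ii)$\Rightarrow$(i): when $G=\sym$, the semigroup $\langle G,a\rangle\setminus G$ is idempotent-generated for every non-permutation $a$: for $a$ of rank $n-1$ this is Howie's theorem that the singular part of $T_n$ is idempotent-generated, and the general case is handled similarly; when $G=\alt$ with $n\ge3$, the $(n-2)$-transitivity of $\alt$ lets one describe $\langle\alt,a\rangle\setminus\alt$ explicitly and reach the same conclusion, while $n\le2$ is immediate. For the three small groups $\agl(1,5)$ (with $n=5$) and $\psl(2,5),\pgl(2,5)$ (with $n=6$), the semigroup $\langle G,a\rangle\setminus G$ depends only on the rank of $a$ and on the $G$-orbit of the pair $(\ker a,\Im a)$, so only finitely many cases occur, and in each case an idempotent factorisation can be exhibited (by hand or by computer).

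For (i)$\Rightarrow$(ii), assume $G\le\sym$ has property~(i); the case $n\le2$ being immediate, let $n\ge3$. First, one shows $G$ is transitive: if $G$ had at least two orbits, pick an orbit $\Omega_2$ with $|\Omega_2|\le n-2$ and let $a$ be the rank-$2$ transformation with kernel $\{\Omega_2,\Omega\setminus\Omega_2\}$ whose image consists of two distinct points of $\Omega\setminus\Omega_2$. Since $\{\Omega_2,\Omega\setminus\Omega_2\}$ is $G$-invariant, every rank-$2$ element of $\langle G,a\rangle$ has this kernel, so every rank-$2$ idempotent of $\langle G,a\rangle\setminus G$ has an image point in $\Omega_2$; as the image of a product of idempotents is contained in the image of its last factor, and $\Im a$ has size $2$ and is disjoint from $\Omega_2$, the element $a$ cannot be written as such a product, contradicting~(i). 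Hence $G$ is transitive, and so has the $2$-id property; by Theorem~\ref{t:main} it then has the road closure property, and by Theorem~\ref{nonex1}(a),(b) it is primitive and basic, hence, by the O'Nan--Scott theorem, affine, diagonal or almost simple.

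Next one climbs using higher ranks. Property~(i) yields the $k$-id property, and hence the $k$-ut property, for every $k$ with $1\le k\le\lfloor n/2\rfloor$. If $n\ge13$ then $k=6$ satisfies $n\ge2k+1$, and the classification of the $k$-id property for $k\ge6$ stated above forces $G\in\{\sym,\alt\}$. If $n=11$, the $5$-id property together with the $k=5$ classification above leaves only $\sym$ and $\alt$ (the remaining groups there have degree $12$, $24$ or $33$). If $n=12$ they leave $\{\sym,\alt,M_{12}\}$, and $M_{12}$ must be ruled out separately: the obstruction is invisible at rank~$2$ (the action of $M_{12}$ on $2$-subsets is primitive, so the condition of Theorem~\ref{t:main} holds vacuously) and, by the same classifications, at ranks $4$ and $5$, so one must either invoke the finer analysis of the $3$-id property or exhibit an explicit map of rank $r\in\{6,\ldots,11\}$ witnessing the failure. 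Lastly, for $5\le n\le10$ the global theorems do not apply, and one enumerates, via \cite{ac_tams}, the basic primitive groups of degree $n$ having the $k$-ut property for all $k\le n/2$, checking that the only ones with property~(i) are $\sym$, $\alt$, and, when $n\in\{5,6\}$, the three exceptional groups; for $3\le n\le4$ the only primitive groups are $\sym$ and $\alt$.

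The crux is not the large-degree regime, which the $k\ge6$ theorem settles wholesale, but the range $5\le n\le12$: for each basic primitive group $G$ of degree $n$ other than $\sym$, $\alt$ and the three exceptions one must pin down a single rank at which $\langle G,a\rangle\setminus G$ fails to be idempotent-generated. For most such $G$ this already happens at rank $2$ or $3$; but for a group like $M_{12}$, which has the $k$-id property for every $k\le n/2$, the failure can occur only at a rank exceeding $n/2$ --- outside the reach of the $k$-ut/$k$-id machinery --- so an ad hoc, essentially computational, argument seems unavoidable. A comparable finite verification underlies the positive direction for $\agl(1,5)$, $\psl(2,5)$ and $\pgl(2,5)$.
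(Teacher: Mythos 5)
First, a point of orientation: the paper does not prove Theorem~\ref{main} at all --- it is quoted as the earlier theorem of Ara\'ujo, Mitchell and Schneider \cite{ArMiSc}, and the present paper in fact \emph{uses} its direction (ii)$\Rightarrow$(i) (``By the converse direction in Theorem~\ref{main}\dots'') to obtain its own positive results for $S_n$ and $A_n$. So there is no in-paper proof to compare against, and any attempt to derive (ii)$\Rightarrow$(i) from the paper's later theorems would be circular. You avoid that circularity by sketching Howie's theorem plus ad hoc arguments, but the sketch is where the real content of \cite{ArMiSc} lives: extending Howie from rank $n-1$ to arbitrary rank, the entire $A_n$ case, and the verification for $\agl(1,5)$, $\psl(2,5)$, $\pgl(2,5)$ at \emph{every} rank (including ranks above $n/2$, which lie outside the reach of all the $k$-ut machinery in this paper) are asserted rather than proved. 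Moreover, your claim that $\langle G,a\rangle\setminus G$ depends only on the rank of $a$ and the $G$-orbit of $(\ker a,\Im a)$ is unjustified: two maps with the same kernel and image can differ by a permutation of the image and need not generate the same semigroup.

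For (i)$\Rightarrow$(ii), your transitivity argument is correct, and the reduction for large $n$ via rank~$6$ is sound and non-circular, since it rests on Theorem~\ref{th4b}, which comes from \cite{ac_tams}. But you then leave two holes. The $M_{12}$ case is not the impasse you describe: Theorem~\ref{th4b} applies for all $n\ge11$ and $6\le k\le\lfloor(n+1)/2\rfloor$, so for $n=12$ the $6$-ut property already forces $A_{12}\le G$; property (i) gives a rank-$6$ idempotent in $\langle M_{12},a\rangle\setminus M_{12}$ for every rank-$6$ map $a$, hence the $6$-ut property via Theorem~\ref{utid}, and $M_{12}$ is eliminated cleanly at rank $6$ --- your detour through the rank-$5$ classification and the ``$n\ge 2k+1$'' hypothesis manufactures a difficulty that is not there. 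The degrees $5\le n\le10$, by contrast, genuinely require the case-by-case elimination you defer, again including ranks above $n/2$; declaring that one ``checks'' these cases is not a proof. As it stands the proposal is a reasonable high-level outline whose hard half ((ii)$\Rightarrow$(i)) and small-degree eliminations are missing.
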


Two of the most famous results in semigroup theory are due to Howie \cite{howie} and Erdos \cite{erdos} and deal with idempotent generated semigroups. Howie proved that the semigroups 
 $\langle S_n,t\rangle\setminus S_n$, where $t$ is a  transformation of rank $n-1$, are idempotent generated; 
 Erdos proved an analogue for linear transformations of a finite dimension vector space. Together the papers \cite{erdos} and \cite{howie} are cited in 
over    one hundred articles,  dealing with subjects including 
semigroups, groups, universal algebra,    ring theory, topology, and  combinatorics.   
Since the publication,    various different proofs for these results  
have  appeared. And in fact, one of the fundamental trends in
 semigroup theory has been the study of how idempotents shape the structure of the semigroup. Howie's book \cite{Ho95} can be seen as an excellent survey of the results obtained from the 40s to the 90s  on this general problem. 
 
Evidently, there is the analogous question for the group of units, namely, to what extent the group of units shapes the structure of the semigroup.  By the time the results above were proved, it was already clear that one of the fundamental aspects of 
the study of finite semigroups is the interplay between groups and idempotents. Many examples among the most famous structural theorems in semigroup 
theory, such as, the Rees Theorem~\cite{rees1940}, or McAlister's $P$-Theorem~\cite[Part II, Theorem~2.6]{mcal1974} (see also~\cite{munn1976}),  show the large extent to which the structure of a semigroup is shaped by a 
group acting in some way on an idempotent structure.

However, unlike the idempotents case, the group of units approach quickly leads to problems that could not be tackled with the tools available 30 years ago, let alone 70 years ago. Fortunately now the situation is totally different, since the enormous  progress made  in the last decades in the theory of permutation groups  provides   the necessary tools to develop semigroup theory from this different point of view.

It was in this general environment  that Theorem \ref{main} has been proved, a dramatic generalization of the results available by then, and also one of the first results on transformation semigroups that required the classification of finite simple groups. 

After the proof of Theorem \ref{main} the great challenge was the classification of the $k$-id groups, and that has been occupying the two authors for the last $8$ years, a project that now finishes. Of course it was impossible to tackle this project before classifying the groups possessing the $k$-ut property and, as that, reference \cite{ac_tams} can be seen as a mere lemma for the results in this paper. We would like to express our gratitude to P. M. Neumann (Oxford) for the support he gave to this long project, since its very beginning. We also thank the comments R. Gray (East Anglia) made on Section \ref{corners}; his deep insight into this topic was very important to us. 

\medskip

We now outline the content of the paper.

Section \ref{prelim} contains a number of results from the literature that will be needed in the reminder of the paper. In Section \ref{sktu} we prove that a $k$-homogeneous group with the strong $k$-ut property possesses the $k$-id property. 
 In Section \ref{sutph} we investigate the connections between the strong $k$-ut property and homogeneity. (Of course the ultimate goal would be to prove that the two properties coincide, or one contains the other, but we could reach no conclusion regarding that.) Additionally,  we provide the classification of groups that have the $k$-id property for  $k\ge 4$. For groups of degree at least $11$, the classification is almost complete, with a small set of well identified groups remaining to be decided. Section \ref{11s} provides the classification of all the groups of degree at most $10$ that possess the $k$-id property. Together with the results of the previous section this finishes the case of $k\ge4$.

Section \ref{corners} contains the proof of Theorem \ref{x}. Section \ref{roadclos} contains the proof  of Theorem \ref{t:main}, and also the main conjecture of this paper. Section \ref{pos} contains examples of groups that have the $2$-id property. Section \ref{comput} contains some results on the computations. The paper finishes with some open problems.
 
\section{The universal transversal property}\label{prelim}

As a preliminary to our main result, we show that the existence of rank $k$
idempotents in $\langle a,G\rangle\setminus G$, where $a$ is any map of
rank~$k$, is equivalent to a property of $G$ studied in \cite{ac_tams}.

\begin{defn}
Let $k$ be a positive integer less than $n$. Recall that the permutation group $G$ of
degree $n$ has the \emph{$k$-universal transversal property} (or
$k$-ut property, for short) if, given any $k$-subset $S$ of the domain of $G$ and any
$k$-part partition $P$ of the domain, there is an element $g\in G$ such
that $Sg$ is a section (or transversal) for $P$.
\end{defn}

\begin{theorem}\label{utid}
For a permutation group $G$ of degree $n$, and an integer $k<n$, the following
are equivalent:
\begin{enumerate}\itemsep0pt
\item For any map $a$ of the domain of $G$ with rank $k$, the semigroup
$\langle G,a\rangle\setminus G$ contains an idempotent of rank $k$;
\item $G$ has the $k$-universal transversal property.
\end{enumerate}
\end{theorem}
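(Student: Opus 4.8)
The plan is to prove both directions by translating between idempotents of rank $k$ in the semigroup $\langle G,a\rangle\setminus G$ and transversals in the group orbit. The key observation is that a transformation $e$ is an idempotent of rank $k$ exactly when it has $k$ fixed points $y_1,\dots,y_k$, one in each of its $k$ kernel classes; equivalently, $e$ is determined by the pair consisting of its image $Y=\{y_1,\dots,y_k\}$ and a $k$-part partition $P$ of the domain for which $Y$ is a transversal, with $e$ sending each part of $P$ to the unique element of $Y$ it contains.

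First I would treat the implication (b) $\Rightarrow$ (a). Suppose $G$ has the $k$-ut property and let $a$ be a map of rank $k$; write $\mathrm{Im}(a)=\{b_1,\dots,b_k\}$ and let $Q$ be the kernel partition of $a$ (also a $k$-part partition). Applying the $k$-ut property to the $k$-set $S=\mathrm{Im}(a)$ and the partition $Q$, there is $g\in G$ with $Sg=\mathrm{Im}(a)g$ a transversal for $Q$. Then I claim $ag$ is an idempotent of rank $k$: it has rank $k$ since $g$ is a bijection, its kernel partition is still $Q$ (post-composing with a permutation does not change the kernel), and each kernel class of $ag$ contains exactly one point of $\mathrm{Im}(ag)=\mathrm{Im}(a)g$ by the transversal condition; moreover $a$ maps the class of $Q$ containing $b_i$ to $b_i$, so $ag$ maps that class to $b_ig$, which lies in the same class — hence $(ag)^2=ag$. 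Since $ag\in\langle G,a\rangle$ and has rank $k<n$, it is a non-permutation, so it lies in $\langle G,a\rangle\setminus G$. This gives the required rank $k$ idempotent.

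For the converse (a) $\Rightarrow$ (b), suppose that for every rank $k$ map $a$ the semigroup $\langle G,a\rangle\setminus G$ contains a rank $k$ idempotent; I must produce, for an arbitrary $k$-set $S$ and $k$-part partition $P$, an element $g$ with $Sg$ a transversal for $P$. The idea is to build a specific test map $a$: pick any map $a_0$ with image $S$ and kernel partition $P$ (say $a_0$ collapses the $i$-th part of $P$ to the $i$-th element of $S$). Apply the hypothesis to $a=a_0$ to get a rank $k$ idempotent $e\in\langle G,a_0\rangle\setminus G$. The heart of the argument is a normal-form analysis: since $e$ has rank $k$ and $a_0$ has rank $k$, any expression of $e$ as a product of elements of $G$ and $a_0$ must, reading from the right, be of the form $g_0 a_0 g_1 a_0 \cdots$, but because rank cannot increase, the rank stays at $k$ throughout, which forces every occurrence of $a_0$ to act bijectively on the current image; iterating, one shows $e = g\, a_0\, h$ for some $g,h\in G$ with $a_0$ restricted to $\mathrm{Im}(g\cdot(\text{stuff}))$ injective — more cleanly, $e$ and $a_0$ lie in the same $\mathcal{D}$-class of the relevant transformation monoid, so $\mathrm{Im}(e)$ lies in the $G$-orbit of $\mathrm{Im}(a_0)=S$, say $\mathrm{Im}(e)=Sg$, and the kernel of $e$ equals the kernel of some $a_0 h$, which has the same kernel partition $P$ as $a_0$ up to the action already absorbed. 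Since $e$ is idempotent, $\mathrm{Im}(e)$ is a transversal for $\ker(e)$; translating back, $Sg$ is a transversal for $P$, which is exactly the $k$-ut property.

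The main obstacle I anticipate is making the normal-form/Green's-relations step in the converse fully rigorous: one must carefully track how the image and kernel of a product evolve, and argue that a rank $k$ idempotent in $\langle G,a_0\rangle$ cannot arise except from an essentially one-letter word $g a_0 h$ (equivalently, that $e$ is $\mathcal{D}$-related to $a_0$ and then exploit that an idempotent in a $\mathcal{D}$-class with group-coset structure $G a_0 G$ has image in $\mathrm{Im}(a_0)^G$ and kernel in $\ker(a_0)^G$). Once that structural fact is in place — it is essentially the standard description of $\langle G,a\rangle\setminus G$ when $a$ has rank $k$ and all smaller-rank elements are products that drop rank — the translation to transversals is routine. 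I would also need to handle the degenerate possibility that a rank $k$ idempotent of $\langle G,a_0\rangle\setminus G$ might in principle have image not in $S^G$; ruling this out is precisely what the rank-preservation argument accomplishes, since any product strictly involving more than one copy of $a_0$ in a rank-preserving way still has image of the form $S^G$ and kernel of the form $P^G$.
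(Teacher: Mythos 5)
Your overall strategy coincides with the paper's, but the forward direction contains a step that is false as stated: you claim that $ag$ is itself an idempotent, on the grounds that $ag$ sends the kernel class that $a$ maps to $b_i$ onto $b_ig$, ``which lies in the same class''. The transversal condition only guarantees that each class of $Q$ contains exactly one point of $Sg$; it does not place $b_ig$ in the class that maps to $b_i$. Concretely, take $X=\{1,2,3,4\}$, let $a$ send $1,2\mapsto3$ and $3,4\mapsto4$ (so $S=\{3,4\}$, $Q=\{\{1,2\},\{3,4\}\}$), and let $g$ be the $3$-cycle $3\mapsto4\mapsto1\mapsto3$. Then $Sg=\{1,4\}$ is a transversal for $Q$, but $ag$ induces the transposition $(1\,4)$ on its image, so $(ag)^2\neq ag$. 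What is true is that $ag$ maps the transversal $Sg$ bijectively to itself, hence restricts to a permutation of $Sg$, and a suitable power $(ag)^m$ fixes $Sg$ pointwise and has image $Sg$; that power is the desired rank-$k$ idempotent (and is a non-permutation since $k<n$). This is exactly the paper's argument and is a one-line repair of yours.

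Your converse is correct and is in substance the paper's: writing $e=g_0ag_1a\cdots ag_r$ after absorbing adjacent group elements, the rank never drops below $k$ along the product, so $\mathrm{Im}(e)=Sg_r$ and $\ker(e)=Pg_0^{-1}$, and idempotency forces $\mathrm{Im}(e)$ to be a transversal for $\ker(e)$, whence $Sg_rg_0$ is a transversal for $P$. The paper phrases this slightly more economically (conjugate by $g_0^{-1}$ to put $e$ in the form $ag_1a\cdots$, then observe that $ag_1a$ still has rank $k$, which already says $Sg_1$ is a transversal for $P$); the Green's-relations and Rees-structure machinery you gesture at is not needed beyond the elementary fact that rank is non-increasing under composition, which pins down the image and kernel of every rank-$k$ word in $\langle G,a\rangle$.
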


\begin{proof}
Suppose that $G$ has the $k$-ut property, and let $a$ be a map of rank $k$,
with kernel $P$ and image $S$. Choose $g$ such that $Sg$ is a transversal to
the kernel of $a$. Then $ag$ maps $Sg$ to itself; so some power of $ag$ fixes
$Sg$ pointwise, and is an idempotent of rank $k$.

Conversely, let $S$ be a $k$-set and $P$ a $k$-partition. Let $a$ be a map
with kernel $P$ and image $S$. By hypothesis, there is an idempotent
 $e\in\langle G,a\rangle\setminus G$ of rank $k$. Without loss of generality, 
$e=ag_1ag_2\cdots ag_r$. (If the expression for $e$ begins with an element 
of $G$, conjugating by its inverse gives an idempotent of the stated form.)
Now the rank of $ag_1$ is equal to $k$, so $Sg_1$ is a transversal for $P$.\qed
\end{proof}

Thus, the $k$-ut property is necessary for the property of interest to us. It is
obvious that a $k$-homogeneous group has the $k$-ut property. We summarise the results
of \cite{ac_tams} for future reference.

\begin{theorem}\label{th4a}
For $n<11$ and $2\leq k\leq \lfloor\frac{n+1}{2}\rfloor$, a group  $G\leq \sym$ with the $k$-universal transversal property is
$k$-homogeneous, with the following exceptions:
\begin{enumerate}\itemsep0pt
\item $n=5$,  $G\cong C_{5}$ or $D(2*5)$ and $k=2$;
\item $n=6$, $G\cong \psl(2,5)$ and $k=3$;
\item  $n=7$,  $G\cong C_{7}$ or $G\cong D(2*7)$, and $k=2$; or
$G\cong\agl(1,7)$ and $k=3$;
\item $n=8$, $G\cong \pgl(2,7)$ and $k=4$;
\item  $n=9$,  $G\cong 3^{{2}}:4$ or $G\cong 3^{{2}}:D(2*4)$ and $k=2$;
\item $n=10$, $G\cong \mathcal{A}_{5} $ or $G\cong \mathcal{S}_{5}$ and $k=2$; or
$G\cong\psl(2,9)$ or $G\cong \mathcal{S}_{6}$   and $k=3$.
\end{enumerate}
\end{theorem}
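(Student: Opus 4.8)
The assertion is a finite classification, so the plan is to reduce the groups that must be examined to a short explicit list and then decide each one by a finite computation. The reduction has two stages. First, if $k\ge2$ then a group with the $k$-ut property must be primitive: an invariant block system or a proper invariant subset of $\Omega$ yields, on choosing a $k$-subset $S$ that sits ``across'' it appropriately, a $k$-part partition $P$ for which no image $Sg$ is a transversal. (For $k=2$ this is just the equivalence with primitivity recalled in the introduction.) Hence it suffices to run through the primitive groups of degree $n$ for $3\le n\le10$; these are classically known and there are only a few dozen of them up to permutation isomorphism. Second, a $k$-homogeneous group automatically has the $k$-ut property --- given $P$, choose any transversal $T$ of $P$ and map a prescribed $k$-set onto $T$ --- so such groups never appear in the exception list; and since the $k$-homogeneous groups of each small degree are themselves known, from the Livingstone--Wagner theorem together with the list of $2$-transitive groups, what is left is a finite list of candidate pairs $(G,k)$ with $G$ primitive of degree at most $10$ and not $k$-homogeneous.

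For each surviving pair I would decide the $k$-ut property directly. It holds exactly when, for every $G$-orbit $\mathcal O$ on $k$-subsets and every $k$-part partition $P$ of $\Omega$, some member of $\mathcal O$ is a transversal for $P$; for $n\le10$ this is a bounded and easily mechanised check, conveniently carried out in \textsc{GAP}. Concretely, for each pair I would either exhibit a single ``bad'' $k$-partition $P$ --- in practice one with one large part and $k-1$ singletons, or one with nearly equal parts --- together with a $G$-orbit on $k$-sets that contains no transversal for $P$, certifying that $k$-ut fails; or else confirm $k$-ut by the finite check. The pairs surviving this are precisely the exceptions (a)--(f). Several structural shortcuts keep the case analysis short and transparent: a regular group such as $C_p$, or a small dihedral or affine group, cannot have the $k$-ut property once ${n\choose k}$ exceeds the group order, which forces the low-degree exceptions to occur only for very small $k$; monotonicity-type reductions relating $k$-ut for neighbouring values of $k$ rule out many pairs at once; and for the degree-$10$ exceptions the action is on the $2$-subsets of a $5$-set, so the $k$-subsets, the $k$-partitions and the transversal relation all admit an explicit combinatorial description.

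The real difficulty is not any single deep idea but completeness: one must be sure the list of primitive groups of each degree at most $10$ is exhaustive, that $k$-homogeneity has been correctly settled for each of them, and that the transversal test has been run without error for every surviving pair $(G,k)$ against every $k$-partition. The genuinely delicate point --- and the reason the exception list has its somewhat ragged shape --- is separating true exceptions from near misses: groups with the $k$-ut property for one value of $k$ but not for an adjacent one, or satisfying the transversal condition for all but a single isomorphism type of $k$-partition. Supplying, in each failing case, a small human-checkable bad partition, and in each surviving case a short combinatorial verification, is what upgrades the computation to a proof.
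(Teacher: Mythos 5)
The paper does not actually prove this statement: it is imported verbatim from \cite{ac_tams} (``We summarise the results of \cite{ac_tams} for future reference''), so there is no internal proof to compare against, and any argument has to stand on its own or defer to that reference. Your overall strategy --- cut down to primitive groups of degree at most $10$, discard the $k$-homogeneous ones (which trivially have the $k$-ut property and so never appear among the exceptions), and decide each surviving pair $(G,k)$ by a finite transversal check --- is the right shape and is essentially what the cited paper does in small degree, with the checks mechanised.

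There are, however, two genuine gaps. First, your reduction rests on the claim that the $k$-ut property forces primitivity for all $k\ge2$, but you only justify it for $k=2$; the phrase ``a $k$-subset $S$ that sits across it appropriately'' hides real work. If $G$ preserves a block system with $m$ blocks of size $b$, the easy argument (put $S$ inside a block, take $P$ with parts that are unions of blocks) needs $b\ge k$ and $m\ge k$, and the complementary configurations ($k>b$, or $m<k$, or intransitive groups in which no orbit has size $k$) require separate arguments that you do not supply. The route taken in \cite{ac_tams} is to prove that $k$-ut implies $(k-1)$-homogeneity, whence transitivity and, via the equivalence of $2$-ut with primitivity, primitivity; that lemma is not immediate and your proposal neither proves it nor replaces it. (For $n\le10$ one could in principle sidestep the reduction entirely by testing every transitive group, but then you must say so and accept the larger computation.) Second, and more fundamentally, the proposal describes what the certificates would look like --- a bad $k$-partition for each failing pair, a completed orbit-versus-partition check for each exception --- without producing a single one. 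For a finite classification theorem those certificates \emph{are} the proof; your last paragraph correctly identifies completeness and the separation of true exceptions from near misses as the delicate points, but identifying them is not the same as resolving them. As it stands this is a sound plan for a verification, not a verification.
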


\begin{theorem}\label{th4b}
Let $n\geq 11$, $G\leq \sym$.
If $6\leq k\leq \lfloor \frac{n+1}{2}\rfloor$, then the following are equivalent:
\begin{enumerate}\itemsep0pt
\item $G$ has the $k$-universal transversal property;
\item $\alt\leq G$.
\end{enumerate}
\end{theorem}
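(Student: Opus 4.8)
The implication (b)$\,\Rightarrow\,$(a) is immediate: if $\alt\le G$ then $G$ is $(n-2)$-transitive, hence $k$-transitive (since $k\le\lfloor\frac{n+1}{2}\rfloor\le n-2$ when $n\ge11$), hence $k$-homogeneous, and a $k$-homogeneous group trivially has the $k$-ut property because every $k$-part partition of an $n$-set admits a transversal (choose one point from each of its $k$ nonempty parts) onto which $G$ can map any given $k$-set. So the content is in (a)$\,\Rightarrow\,$(b), and I would organise it in two stages: first use the $k$-ut property to pin $G$ down to a short list of highly transitive candidates, then eliminate every candidate other than $\alt$ and $\sym$.

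For the first stage the crucial fact -- the one that does the real work, and which is the substance of \cite{ac_tams} -- is that the $k$-ut property forces $G$ to be $(k-1)$-homogeneous. I would prove this in three bites. (i) $k$-ut implies transitivity: if the $G$-orbits on $\Omega$ are $\Omega_1,\dots,\Omega_r$ with $r\ge2$, then $(|S\cap\Omega_1|,\dots,|S\cap\Omega_r|)$ is a $G$-invariant of a $k$-set $S$, so it suffices to build a $k$-set $S$ (taken inside a single large orbit) and a $k$-part partition $P$ (with enough singleton parts inside another orbit) no transversal of which has that intersection pattern. (ii) $k$-ut implies primitivity: if $\{\Delta_1,\dots,\Delta_m\}$ is a block system then, as $G$ permutes the blocks, the \emph{integer partition} of $k$ determined by the multiset $\{|S\cap\Delta_i|\}$ is a $G$-invariant of $S$, and one again exhibits a $k$-set and a $k$-part partition transverse to the block system all of whose transversals carry the wrong invariant. (iii) Bootstrapping up: applying $k$-ut to the partition consisting of the $k-1$ singletons of a prescribed $(k-1)$-set $C$ together with $\Omega\setminus C$ shows that \emph{every} $k$-set can be moved to contain $C$; this, together with the monotonicity of the number of $G$-orbits on $i$-sets for $i\le n/2$, is leveraged to conclude that $G$ is transitive on $(k-1)$-sets.

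With $G$ now $(k-1)$-homogeneous and $k-1\ge5$, the theorem of Livingstone and Wagner (a $t$-homogeneous group of degree $>2t$ with $t\ge5$ is $t$-transitive; here $2(k-1)\le n-1$) upgrades this to $(k-1)$-transitivity, and I finish with the classification of multiply transitive groups. If $k\ge7$, then $k-1\ge6$, and the only $6$-transitive groups (a consequence of CFSG) have $\alt\le G$; done. If $k=6$, then $G$ is $5$-transitive, hence one of $\sym$, $\alt$, $M_{12}$ (with $n=12$) or $M_{24}$ (with $n=24$), and it remains only to show that neither Mathieu group has the $6$-ut property. For $M_{12}$ this is a short finite check: exhibit a partition of the $12$ points into six pairs that is a transversal of no block of the Steiner system $S(5,6,12)$ -- equivalently, of no member of the $M_{12}$-orbit of size $132$ on $6$-sets -- which already violates $6$-ut; the $M_{24}$ case is the analogous, larger (and most conveniently computational) check against $S(5,8,24)$.

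The main obstacle is step (iii): converting the rather weak-looking consequence ``every $k$-set can be slid so as to contain any prescribed $(k-1)$-set'' into honest transitivity on $(k-1)$-sets is the delicate point, and it is exactly here that the Livingstone--Wagner-type orbit-counting inequalities must be wielded with care. Everything downstream of ``$(k-1)$-homogeneous'' is then bookkeeping with established classifications, apart from the elimination of $M_{12}$ and $M_{24}$, which is elementary but does require actually producing (or computer-checking) a bad $6$-partition.
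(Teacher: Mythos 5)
First, note that the paper itself gives no proof of Theorem~\ref{th4b}: it is quoted verbatim from \cite{ac_tams} (``We summarise the results of \cite{ac_tams} for future reference''), so the comparison has to be with the proof in that reference. Your overall architecture does match it: reduce the $k$-ut property to $(k-1)$-homogeneity, invoke Livingstone--Wagner to upgrade to $(k-1)$-transitivity, read off the list of $5$- and $6$-transitive groups from CFSG, and eliminate $M_{12}$ and $M_{24}$ for $k=6$. The converse direction and your steps (i) and (ii) are fine.

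The genuine gap is your step (iii). What the $k$-ut property gives directly (via the partition consisting of the singletons of a $(k-1)$-set $C$ together with the complement) is only that every orbit of $k$-sets has a member containing $C$ --- that is, $(k-1,k)$-homogeneity. Passing from this to honest transitivity on $(k-1)$-sets is not a consequence of the Livingstone--Wagner monotonicity of orbit numbers: monotonicity only bounds the number $r$ of orbits on $(k-1)$-sets by the number on $k$-sets, and $(k-1,k)$-homogeneity only forces every $k$-set to contain representatives of all $r$ orbits among its $k$ subsets of size $k-1$, which is perfectly consistent with $r\ge2$ as far as counting goes. In \cite{ac_tams} this step is a separate classification theorem for $(k-1,k)$-homogeneous groups, itself CFSG-dependent and the real labour of that paper; indeed the present paper pointedly remarks that the strong $k$-ut analysis, ``unlike our earlier investigation of the $k$-ut property,\dots does not require the concept of $(k-1,k)$-homogeneity or the classification of $k$-homogeneous groups,'' confirming that the $k$-ut proof does. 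So the sentence ``this\dots is leveraged to conclude that $G$ is transitive on $(k-1)$-sets'' is precisely where a theorem is being assumed rather than proved. A second, smaller omission: the eliminations of $M_{12}$ and $M_{24}$ for $k=6$ require an actual witnessing $6$-partition (or a computation); your suggested partition of the $12$ points into six pairs is not obviously one, since among its $64$ transversals one expects roughly $64/7$ hexads of $S(5,6,12)$, so the existence of a pairing avoiding all $132$ blocks needs to be exhibited, not asserted.
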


\begin{theorem}\label{th4c}
Let $n\geq 11$, $G\leq \sym$. The following are equivalent:
\begin{enumerate}\itemsep0pt
\item $G$ has the $5$-universal transversal property;
\item $G$ is 5-homogeneous, or  $n=33$ and $G=\pgaml(2,32)$.
\end{enumerate}
\end{theorem}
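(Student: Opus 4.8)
The implication (b)$\Rightarrow$(a) is the easy direction: a $5$-homogeneous group obviously has the $5$-ut property (as already observed in the text right after Theorem~\ref{utid}), so it only remains to check that the single extra group $\pgaml(2,32)$ of degree $33$ has the $5$-ut property. Here I would argue concretely: the point set is the projective line over $\GF(32)$, a $5$-partition $P$ of the $33$ points has parts of sizes summing to $33$, and I need to show that the $\pgaml(2,32)$-orbit of some (hence any, since the group is at least $2$-transitive, indeed quite highly transitive on unordered configurations?) $5$-set meets every $5$-partition in a transversal. In fact one should check $\pgaml(2,32)$ is $5$-homogeneous-like enough — more precisely, verify it acts on $5$-subsets with the right orbit structure — or simply cite \cite{ac_tams} from which this theorem is explicitly "summarised". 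Since the theorem is stated as a summary of \cite{ac_tams}, the honest proof is: this is Theorem~(something) of \cite{ac_tams}, and we recall it here for convenience. So the real content is the harder direction.

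**The hard direction (a)$\Rightarrow$(b).** Assume $G \le \sym$ with $n \ge 11$ has the $5$-ut property. The plan is a reduction-and-classification argument. First, the $5$-ut property implies the $4$-ut property: given a $4$-set $S$ and a $4$-partition $P$, refine one part of $P$ by splitting off a singleton to get a $5$-partition $P'$, adjoin a suitable point to $S$ to get a $5$-set $S'$, use $5$-ut to map $S'$ onto a transversal of $P'$, and then check $S$ maps into a transversal of $P$ (discarding the coordinate corresponding to the split-off singleton). Iterating, $5$-ut $\Rightarrow$ $k$-ut for all $k \le 5$; in particular $G$ has the $2$-ut property, so $G$ is primitive, and the $3$-ut property, which by the results of \cite{ac_tams} already forces $G$ to be $2$-transitive (the $3$-homogeneous-or-exceptional classification there, with the exceptional groups having degree $< 11$, ruled out by $n\ge 11$). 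So $G$ is a $2$-transitive group of degree $n \ge 11$. Now invoke the classification of $2$-transitive groups (a consequence of CFSG): $G$ is affine or almost simple with a known socle. Then one goes case by case through this list and, for each family, either shows it is $5$-homogeneous, or shows it fails $5$-ut by exhibiting an explicit bad $5$-partition, with $\pgaml(2,32)$ surviving as the sole exception. The combinatorial heart is: for a non-$5$-homogeneous $2$-transitive group, find a $5$-partition $P$ such that no image of a fixed $5$-set is a transversal — typically one chooses $P$ adapted to an invariant structure (a subline/subspace, a design block, a hyperplane) so that every $5$-set in the relevant orbit fails to meet some part or over-meets another.

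**Where the difficulty lies.** The main obstacle is the case analysis over $2$-transitive groups, specifically the two-dimensional projective families $\psl(2,q) \le G \le \pgaml(2,q)$, where the "fail $5$-ut" construction is delicate: for these $G$ acting on $q+1$ points, a $5$-set corresponds to a cross-ratio-type invariant, and whether one can force transversality depends subtly on $q \bmod$ small numbers and on field automorphisms — this is exactly why $q = 32$ (so $n = 33$) slips through. One needs a counting/averaging argument (estimate the number of $5$-sets in the orbit versus the number of $5$-partitions, or use the permutation character / subdegrees) to show that for all $q$ except $q=32$ some $5$-partition is missed, while for $q=32$ the group happens to be large enough; the extra field automorphism in $\pgaml(2,32)$ (with $\log_2 32 = 5$ prime) is what tips the balance. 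Since this is a summary of \cite{ac_tams}, I would in practice defer these computations to that reference, stating only the reduction to $2$-transitive groups and the shape of the counterexample constructions, and then cite \cite{ac_tams} for the completion of the case analysis. \qed
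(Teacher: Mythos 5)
The paper does not prove this statement at all: Theorem~\ref{th4c} sits in the block introduced by ``We summarise the results of \cite{ac_tams} for future reference'', so the paper's own ``proof'' is exactly the citation you fall back on at the end of your proposal. To that extent you have correctly identified what is (and is not) being proved here, and deferring to \cite{ac_tams} is the right call.

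However, the sketch you offer as the substance of the argument has a concrete gap and also aims at the wrong reduction target. The gap: your derivation of $4$-ut from $5$-ut by splitting a part of the $4$-partition $P$ into a $5$-partition $P'$ and adjoining a point $a$ to the $4$-set $S$ does not work as stated. The element $g$ supplied by $5$-ut makes $S'g=(S\cup\{a\})g$ a transversal of $P'$, but you have no control over which part of $P'$ the extra point $ag$ lands in; if $ag$ does not land in one of the two parts obtained by splitting, then deleting it leaves a part of $P$ unmet, so $Sg$ need not be a transversal of $P$. In \cite{ac_tams} the descent is obtained through the much stronger statement that the $k$-ut property implies $(k-1)$-\emph{homogeneity} (for $k\le(n+1)/2$), and that proof is not elementary --- it goes through $(k-1,k)$-homogeneity and the classification of homogeneous groups, as the present paper itself remarks at the start of Section~\ref{s:suth}. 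Once one has that, the correct reduction for $k=5$ and $n\ge11$ is to the short explicit list of $4$-homogeneous groups of degree at least $11$ (symmetric, alternating, Mathieu groups, and $\pgaml(2,32)$), after which one only needs to decide $5$-ut for the handful of members that are not $5$-homogeneous. Your proposed reduction to ``$2$-transitive via $3$-ut'' stops far too early and would leave the entire CFSG list of $2$-transitive groups (including all the affine families) to be handled case by case, which your sketch does not actually do; and the closing speculation that a counting or averaging argument over $q$ explains why $q=32$ survives is not how the exception arises --- $\pgaml(2,32)$ is simply the one $4$-homogeneous, non-$5$-homogeneous group that a direct check shows still satisfies $5$-ut.
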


\begin{theorem}\label{th4d}
Let $n\geq 11$, $G\leq \sym$ and let $2\leq k\leq \lfloor \frac{n+1}{2}\rfloor$. If $G$ is $4$-homogeneous,  or $n=12$ and $G=\M_{11}$, then $G$ has the $4$-universal transversal property.
If there are further groups possessing the $4$-universal transversal property, then they must be groups $G$ such that $\psl(2,q)\leq G\leq \pgaml(2,q)$, with either $q$ prime  or $q=2^p$ for $p$ prime.
\end{theorem}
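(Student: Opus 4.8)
The plan is to treat the two implications separately. For sufficiency, $4$-homogeneity trivially yields the $4$-ut property: given a $4$-set $S$ and a $4$-partition $P=\{A_1,A_2,A_3,A_4\}$, choose $a_i\in A_i$; then $\{a_1,a_2,a_3,a_4\}$ is a transversal for $P$, and $4$-homogeneity supplies $g\in G$ with $Sg=\{a_1,a_2,a_3,a_4\}$. The only additional group asserted to be sufficient is $\M_{11}$ of degree $12$, which is $3$-transitive but not $4$-homogeneous; here one lists the two orbits of this group on $4$-subsets of the $12$ points and checks that each of them meets a transversal of every $4$-partition --- a finite verification, most cleanly carried out by machine, or by hand by classifying $4$-partitions according to their part-size pattern.

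For necessity we first reduce to $3$-homogeneous groups: one shows that (for $n\ge11$) the $4$-ut property forces $G$ to be $3$-homogeneous. With this in hand, the classification of $3$-homogeneous permutation groups --- Kantor's theorem for those that are not $3$-transitive, together with the classification of finite simple groups for the $3$-transitive ones --- leaves, for $n\ge11$, only: $\alt$ and $\sym$; the groups with $\psl(2,q)\le G\le\pgaml(2,q)$ of degree $q+1$ (subject to $G$ actually being $3$-homogeneous); the affine groups $\agl(d,2)$ of degree $2^d$ with $d\ge4$, along with $2^4{:}A_7$ of degree $16$ and $\agaml(1,32)$ of degree $32$; and the $3$-transitive actions of Mathieu groups, namely $\M_{11}$ in degrees $11$ and $12$, $\M_{12}$, $\M_{22}$, $\M_{22}{:}2$, $\M_{23}$ and $\M_{24}$. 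Now $\alt$, $\sym$, and the $4$-transitive groups $\M_{11}$ (degree $11$), $\M_{12}$, $\M_{23}$, $\M_{24}$ are $4$-homogeneous and so already lie in the stated conclusion, while $\M_{11}$ of degree $12$ was handled above; so it remains to dispose of the rest.

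The groups still to be eliminated are killed by exhibiting, for each, a $4$-set $S$ and a $4$-partition $P$ such that no $G$-image of $S$ is a transversal for $P$. For the affine groups this is transparent: each of them is transitive on $2$-flats, so the affine planes form a single orbit on $4$-sets, and it suffices to produce a $P$ none of whose transversals is a plane. For example, let $P$ consist of two singletons $\{a_1\}$ and $\{a_2\}$, a pair $\{b,b'\}$ with $a_1+a_2=b+b'$, and the remaining $2^d-4$ points; since the unique $2$-flat through $a_1,a_2,b$ is $\{a_1,a_2,b,b'\}$ and likewise through $a_1,a_2,b'$, a short check shows no transversal of $P$ is an affine plane, so the orbit of a planar $4$-set is disjoint from the transversals of $P$. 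The group $2^4{:}A_7$ and $\agaml(1,32)$ are dealt with by the same device, and $\M_{22}$ and $\M_{22}{:}2$ by its analogue for the Steiner system $S(3,6,22)$ --- or, where a uniform argument is awkward, by direct computation. The substantial case, and the main obstacle, is the family $\psl(2,q)\le G\le\pgaml(2,q)$ acting on the $q+1$ points of $PG(1,q)$. Here one parametrizes the $\pgl(2,q)$-orbits on $4$-sets by the $S_3$-orbits of the cross-ratio in $\GF(q)\setminus\{0,1\}$, follows the splitting caused by passing to $\psl(2,q)$ (index $2$ for $q$ odd) and the merging caused by adjoining field automorphisms, and --- using that a $4$-set lies in a proper subline $PG(1,q_0)\subset PG(1,q)$ exactly when its cross-ratio lies in the subfield $\GF(q_0)$ --- constructs, whenever $q$ is composite and not of the form $2^p$, a $4$-partition and a $4$-set whose orbit misses all the transversals. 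The real labour is in organizing this into subcases (by $q$ modulo small integers and by the index of $G$ in $\pgaml(2,q)$) and in locating the right ``bad'' partition in each.

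Finally, when $q$ is prime or $q=2^p$ there is no proper subline and the subfield obstruction vanishes; we cannot then decide whether $4$-ut holds, which is exactly why the theorem is phrased conditionally (``if there are further groups \dots''). A further delicate point, worth flagging, is the opening reduction ``$4$-ut $\Rightarrow$ $3$-homogeneous'', which --- in contrast with the corresponding implications for $k=5,6$ --- does not follow from a one-line partition argument and needs its own treatment.
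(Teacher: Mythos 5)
This theorem is not proved in the paper at all: it is one of the results imported verbatim from \cite{ac_tams} (``We summarise the results of \cite{ac_tams} for future reference''), so there is no in-paper argument to measure you against. Judged against the cited source, your architecture is the right one: $4$-homogeneity trivially gives $4$-ut; $\M_{11}$ of degree $12$ is a finite check; the necessity direction goes by first forcing $3$-homogeneity, then running through the classification of $3$-homogeneous groups of degree at least $11$ and killing the non-examples by exhibiting an orbit on $4$-sets and a $4$-partition with no transversal in that orbit. Your elimination of the affine cases is genuinely complete and correct: a $4$-subset of $\mathrm{AG}(d,2)$ is a $2$-flat iff its elements sum to zero, so with $a_1+a_2=b+b'$ the fourth point of any planar transversal through $a_1,a_2$ and one of $b,b'$ is forced to be the other of $b,b'$, which sits in the wrong part; and since any subgroup of $\agl(d,2)$ maps $2$-flats to $2$-flats, transitivity on planes is not even needed. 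The conditional phrasing for $q$ prime or $q=2^p$ is also correctly diagnosed as the disappearance of proper sublines carrying $4$-sets.

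That said, as a self-contained proof the proposal has two holes, both of which you flag but neither of which you fill, and they are precisely where the work lives in \cite{ac_tams}. First, the reduction ``$4$-ut $\Rightarrow$ $3$-homogeneous'' is asserted with the remark that it ``needs its own treatment''; in the cited paper this goes through the notion of $(k-1,k)$-homogeneity and a CFSG-dependent classification (the present paper even alludes to this when it contrasts the strong $k$-ut argument, which is ``elementary, and does not require the concept of $(k-1,k)$-homogeneity''). Without this step the entire case analysis has no starting point. Second, for the family $\psl(2,q)\le G\le\pgaml(2,q)$ you describe the cross-ratio parametrisation and the subline obstruction but do not actually produce, for composite $q\ne 2^p$, the $4$-partition whose transversals miss a subline orbit; ``the real labour is in organizing this into subcases'' is an accurate description of the gap rather than a proof. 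The $\M_{22}$ and $\M_{22}{:}2$ cases are likewise deferred to computation. So: right skeleton, matching the cited proof's decomposition, but the two pivotal lemmas are named rather than proved.
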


\begin{theorem}\label{th4e}
Let $n\geq 11$, $G\leq \sym$ and let $2\leq k\leq \lfloor \frac{n+1}{2}\rfloor$.Then $G$ has the $3$-universal transversal property if $G$ is $3$-homogenous,
or is one of the following groups:
\begin{enumerate}\itemsep0pt
\item $\psl(2,q)\leq G\leq \psigl(2,q)$, where $q\equiv 1 (\mbox{mod }4)$;
\item $\Sp(2d,2)$ with $d\geq 3$, in either of its $2$-transitive representations;
\item $2^{{2d}}:\Sp(2d,2)$;
\item $\HS$;
\item $\Co_{3}$;
\item $2^{{6}}:G_{2}(2)$ and its subgroup of index $2$;
\item $\agl(1,p)$ where, for all $c\in {\GF}(p)\setminus\{0,1\}$, $|\langle -1,c,c-1\rangle|=p-1$.
\end{enumerate}
If there are more groups possessing the $3$-universal transversal property, then they must be Suzuki groups $\Sz(q)$, possibly with field automorphisms adjoined, and/or subgroups of index $2$ in $\agl(1,p)$ for $p\equiv 11$ $(mod\ 12)$.
\end{theorem}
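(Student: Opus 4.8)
The plan is to establish a \emph{necessity} statement --- every $G$ with the $3$-ut property is $3$-homogeneous, appears in (a)--(g), or lies in the residual list --- together with the \emph{sufficiency} of each item (a)--(g); the ``possibly'' clause is then forced by exactly the two families, Suzuki groups and index-$2$ subgroups of $\agl(1,p)$, whose analysis cannot be closed. The first step of necessity reduces to $2$-homogeneous groups: if $G$ has the $3$-ut property and two $2$-subsets $\{c,d\}$, $\{c',d'\}$ lie in distinct $G$-orbits, then applying the $3$-ut property to the partition $(\{c\},\{d\},\Omega\setminus\{c,d\})$ shows every $3$-subset $S$ admits $g$ with $Sg\supseteq\{c,d\}$, so every $3$-subset contains a pair from the orbit of $\{c,d\}$ and, symmetrically, one from the orbit of $\{c',d'\}$; taking $\Gamma$ to be the orbital graph with edge set the orbit of $\{c,d\}$, both $\Gamma$ and its complement are triangle-free, whence $n\le R(3,3)-1=5$. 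Thus for $n\ge11$ the group $G$ is $2$-homogeneous, in particular primitive, and one applies the classification of $2$-transitive (and of the non-$2$-transitive $2$-homogeneous affine) groups: $G$ is either almost simple with socle among $A_m$, $\psl(d,q)$, $\Sp(2d,2)$, $\mathrm{PSU}(3,q)$, $\Sz(q)$, ${}^2G_2(q)$, or a sporadic group ($M_{11},M_{12},M_{22},M_{23},M_{24}$, $M_{11}$ on $12$ points, $A_7$ on $15$ points, $\psl(2,11)$ on $11$ points, $\HS$, $\Co_3$), or affine of degree $p^d$ --- a subgroup of $\agl(d,p)$ containing the translations, with point-stabiliser sub-type $\Gamma\mathrm{L}(1,p^d)$, $\mathrm{SL}$, $\Sp$, $G_2$, or sporadic.

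To eliminate the groups not in the list, the sharpest tool is the observation that a $2$-transitive group preserving a non-trivial $2$-design with $\lambda=1$ and block size $k$, $3\le k<n$, fails the $3$-ut property: for a block $B$ and $a\in B$ the partition $(\{a\},B\setminus\{a\},\Omega\setminus B)$ cannot be transversalised by any triple lying on a single block, since two of its points then lie on $B$ and determine $B$, forcing the third point into $B$ as well. This removes $\psl(d,q)$ and $\pgl(d,q)$ for $d\ge3$ (points and lines of projective space), $\agl(d,q)$ for $d\ge2$, $q\ge3$ (affine lines), $\mathrm{PSU}(3,q)$ and ${}^2G_2(q)$ (their invariant unitals), and $\agaml(1,q)$ for $q$ a proper odd prime power (affine lines over the prime subfield); the $\agl(d,2)$-type groups that happen to be $3$-transitive fall under the $3$-homogeneous clause, and the handful of remaining affine cases --- $\agaml(1,2^f)$, sporadic point stabilisers, intermediate subgroups over $\mathbb F_2$ --- are dealt with by ad hoc partitions or by direct computation. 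For the positive direction one first reduces the $3$-ut property, for a $2$-transitive group, to a short list of extremal partition shapes (type $(1,1,n-2)$ being automatic and the substantive tests having two small parts), and then checks, by examining the one or two orbits of triples --- computationally for the sporadics of manageable degree --- that $\Sp(2d,2)$ in both $2$-transitive actions, $2^{2d}{:}\Sp(2d,2)$, $2^6{:}G_2(2)$ and its index-$2$ subgroup, $\HS$ and $\Co_3$ all satisfy it.

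The delicate positive cases dictate the final shape of the theorem. On the projective line $\pgl(2,q)$ is $3$-transitive (one orbit of triples, covered by the $3$-homogeneous clause), while $\psl(2,q)$ has index $2$ and hence one or two orbits; the condition $q\equiv1\pmod4$, i.e.\ $-1$ a square, is exactly what makes the stabiliser in $\psl(2,q)$ of a pair move the remaining points freely enough to transversalise every extremal partition, yielding (a), with $q\equiv3\pmod4$ failing. For $G=\agl(1,p)$ on $\GF(p)$, sharply $2$-transitive, the orbits of triples are indexed by the $S_3$-action on cross-ratios ($\{0,1,c\}$ and $\{0,1,c'\}$ are equivalent iff $c'\in\{c,\,1/c,\,1-c,\,1/(1-c),\,(c-1)/c,\,c/(c-1)\}$), and an explicit affine computation shows that the orbit of $\{0,1,c\}$ transversalises every $3$-partition precisely when $\langle-1,c,c-1\rangle=\GF(p)^{*}$; intersecting over all $c\ne0,1$ gives (g). The same scheme applied to the index-$2$ subgroups of $\agl(1,p)$ (the pertinent primes being $p\equiv11\pmod{12}$, where $\langle-1\rangle$ is small and the other generators may conspire to lie in a proper subgroup of $\GF(p)^{*}$) and to the Suzuki groups $\Sz(q)$, whose orbits on triples resist a complete description, does not close --- and recording these two families as the only possible further examples is precisely the content of the residual clause. \emph{This last point --- a number-theoretic generation question for certain primes, and the combinatorics of $\Sz(q)$ acting on triples --- is the main obstacle}; everything preceding it is a finite, if lengthy, run through the $2$-transitive groups.
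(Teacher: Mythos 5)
The first thing to say is that this paper contains no proof of Theorem~\ref{th4e} at all: it is one of several results quoted verbatim from \cite{ac_tams} (``We summarise the results of \cite{ac_tams} for future reference''), so there is no in-paper argument to compare yours against; the comparison has to be with the proof in that reference. At the level of strategy your outline matches it. The two reductions you actually carry out are correct and are the right ones: the Ramsey argument (if two $G$-orbits on $2$-sets exist, the $3$-ut property applied to partitions of shape $(1,1,n-2)$ forces an orbital graph and its complement to be triangle-free, so $n\le 5$, hence $G$ is $2$-homogeneous for $n\ge 11$), and the $\lambda=1$ design obstruction via the partition $(\{a\},B\setminus\{a\},\Omega\setminus B)$, which is indeed the workhorse for eliminating $\psl(d,q)$ with $d\ge3$, the unital groups $\mathrm{PSU}(3,q)$ and ${}^2G_2(q)$, and the affine groups over fields with at least three elements.

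The gap is that everything which actually determines the list is asserted rather than proved. The sufficiency of each of (a)--(g) is the bulk of the work in \cite{ac_tams}: your claim that for a $2$-transitive group the $3$-ut property ``reduces to a short list of extremal partition shapes'' is not justified and is not literally true --- one must control all partition types $(n_1,n_2,n_3)$ against each $G$-orbit on $3$-sets, and for groups with two orbits on triples (such as $\psl(2,q)$ with $q\equiv1\pmod 4$, or $\Sp(2d,2)$) the interaction between orbit and partition type is exactly where the proof lives. Likewise the statements that $q\equiv1\pmod4$ is ``exactly what makes the stabiliser move points freely enough,'' that $\langle-1,c,c-1\rangle=\GF(p)^*$ emerges from ``an explicit affine computation,'' and that the leftover cases ($\agaml(1,2^f)$, $\psl(2,11)$ on $11$ points --- where the invariant structure is a biplane with $\lambda=2$, so your design obstruction does not apply as stated --- and the sporadic affine stabilisers) are ``dealt with by ad hoc partitions or direct computation'' are placeholders for the decisive case analysis. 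So what you have is a correct and well-chosen plan, essentially the one executed in \cite{ac_tams}, but not yet a proof: the necessity half is in good shape, the sufficiency half is almost entirely deferred.
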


However, the $2$-ut property is equivalent to primitivity, and no more can
be said! This is the case that we will consider in greatest detail in our
investigations here.

By the converse direction in Theorem~\ref{main}, if $G$ is $\sym$ or $\alt$,
then $\langle G,a\rangle\setminus G$ is idempotent-generated for any
non-permuatation $G$. Hence the following theorem is true:

\begin{theorem}
Let $k\ge6$ and $n\ge2k+1$. Then $\langle G,a\rangle\setminus G$ is 
idempotent-generated for every rank $k$ map $a$ if and only if $G=\sym$ or
$G=\alt$.
\end{theorem}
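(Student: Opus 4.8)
The plan is to reduce this statement to results already established in the excerpt, essentially by unwinding the definitions. The key observation is that the statement asserts the equivalence of two conditions for $k\ge 6$ and $n\ge 2k+1$: (1) $\langle G,a\rangle\setminus G$ is idempotent-generated for every rank $k$ map $a$; and (2) $G=\sym$ or $G=\alt$. This is precisely the $k$-id property (as defined in the introduction) being equivalent to $G$ being symmetric or alternating, which is exactly part (a) of the first Theorem of the paper. So at the level of content there is nothing new; the only task is to explain why the hypotheses $k\ge 6$, $n\ge 2k+1$ are what is needed and to assemble the pieces cleanly.

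First I would establish the forward direction. Suppose $\langle G,a\rangle\setminus G = \langle E(\langle G,a\rangle\setminus G)\rangle$ for every rank $k$ map $a$; this is the $k$-id property. In particular, for each such $a$ the semigroup $\langle G,a\rangle\setminus G$ contains an idempotent of rank $k$ (indeed it is generated by idempotents, and since $a$ itself has rank $k$ and no element of the semigroup has rank exceeding $k$, some generating idempotent must have rank exactly $k$). By Theorem~\ref{utid}, this means $G$ has the $k$-universal transversal property. Now invoke Theorem~\ref{th4b}: for $n\ge 11$ and $6\le k\le\lfloor\frac{n+1}{2}\rfloor$, the $k$-ut property is equivalent to $\alt\le G$. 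The hypotheses $k\ge 6$ and $n\ge 2k+1$ guarantee both $n\ge 13>11$ and $k\le\frac{n-1}{2}\le\lfloor\frac{n+1}{2}\rfloor$, so Theorem~\ref{th4b} applies and yields $\alt\le G$, i.e. $G=\sym$ or $G=\alt$.

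For the converse direction, suppose $G=\sym$ or $G=\alt$. Then by Theorem~\ref{main} (specifically the direction (ii)$\Rightarrow$(i), using clause (ii)(c)), the semigroup $\langle G,a\rangle\setminus G$ is idempotent-generated for \emph{all} non-permutations $a\in T_n\setminus\sym$ — in particular for every rank $k$ map. (This is exactly the remark preceding the statement: "By the converse direction in Theorem~\ref{main}, if $G$ is $\sym$ or $\alt$, then $\langle G,a\rangle\setminus G$ is idempotent-generated for any non-permutation.") This completes the equivalence.

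There is no real obstacle here: the statement is a corollary packaging of part (a) of the paper's first theorem together with the elementary reduction via Theorem~\ref{utid} and the classification Theorem~\ref{th4b}. The only point requiring a moment's care is the claim in the forward direction that idempotent-generation forces the existence of a rank $k$ idempotent; this is where one uses that $a$ has rank $k$ and ranks cannot increase under composition, combined with the fact that a product of idempotents each of rank $<k$ would have rank $<k$ and so could not generate $a$. Everything else is immediate from the cited results, and in fact the bulk of the mathematical work — proving Theorem~\ref{th4b} and the hard direction of the first theorem — is assumed as already done. So the "proof" is really just a verification that the numerical hypotheses line up with those of Theorem~\ref{th4b} and Theorem~\ref{main}.
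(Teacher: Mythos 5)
Your proposal is correct and follows essentially the same route as the paper: the forward direction via Theorem~\ref{utid} (idempotent generation forces a rank-$k$ idempotent, hence the $k$-ut property) combined with the classification in Theorem~\ref{th4b}, and the converse via clause (ii)(c) of Theorem~\ref{main}. Your extra remark that a product of idempotents equal to $a$ must involve idempotents of rank exactly $k$ (since rank cannot increase under composition) is a correct and worthwhile explicit justification of a step the paper leaves implicit.
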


We will extend this result to the cases $k=3,4,5$ in Section~\ref{s:suth}.

\section{The strong universal transversal property}\label{sktu}

In Theorem \ref{utid} we proved that the $k$-id property implies the $k$-ut property. In this section we introduce a new condition, that we call \emph{strong $k$-ut property,} and prove that this new condition (together with $k$-homogeneity) implies the $k$-id property.

Let $k\le n/2$, let $X=\{1,\ldots,n\}$ and let $S_n$ be the symmetric group on $X$. A primitive group $G\le S_n$ is said to have the {\em strong $k$-ut property} if, given any $(k+1)$-tuple $(a_1,\ldots,a_{k+1})\in X^{k+1}$ (of $k+1$ different elements in $X$) and any ordered $k$-partition $P=(A_1,\ldots,A_k)$ of $X$, there exists $g\in G$ such that $\{a_1,\ldots,  a_k\}g$ is a section for $P$ and $a_1g,a_{k+1}g\in A_i$, for some $i\in \{1,\ldots,k\}$.

\begin{theorem}\label{main3}
Let $k\le n/2$ and $G\le S_n$. If $G$ is $k$-homogeneous and possesses the strong $k$-ut property, then for any rank~$k$ map $t$, the semigroup $\langle G,t\rangle\setminus G$ is idempotent-generated.
\end{theorem}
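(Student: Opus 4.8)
The plan is to show that, given a rank $k$ map $t$, every idempotent of rank $k$ lies in the idempotent-generated subsemigroup, and that these rank $k$ idempotents suffice to generate all of $\langle G,t\rangle\setminus G$. The second half is easy once the first is in place: $\langle G,t\rangle\setminus G$ has a unique minimal ideal consisting of the rank $k$ elements (since $t$ has rank $k$ and composing with permutations or with $t$ again cannot increase rank), and any element $u$ of rank $j<k$ can be written as a product of rank $k$ elements — indeed $u=u'e$ where $e$ is a suitable rank $k$ idempotent and $u'$ has rank $k$ — so it is enough to deal with rank $k$ elements, and among those it is enough to deal with idempotents because $G$ is $k$-homogeneous and every rank $k$ element is $ge$ or $eg$ for $g\in G$ and $e$ a rank $k$ idempotent (group elements are not available, but $g$ can be absorbed using the action on the image/kernel combined with Theorem~\ref{utid}).

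The heart of the argument is therefore: \emph{any two rank $k$ idempotents of $\langle G,t\rangle\setminus G$ with the same image are $\mathcal{L}$-related by a chain of idempotents, and similarly for the same kernel}, so that the whole $\mathcal{D}$-class of rank $k$ idempotents lies in the idempotent-generated part. Here is where the strong $k$-ut property enters. Write $e$ for an idempotent of rank $k$ with image $S=\{a_1,\dots,a_k\}$ (a transversal of its own kernel $P$). I want to "move" one point of the image: replace $a_1$ by a new point $a_{k+1}$ lying in the same part $A_i$ of $P$ as $a_1$, keeping the other $k-1$ points fixed, and realise this move as multiplication by an idempotent. The strong $k$-ut property, applied to the tuple $(a_1,\dots,a_{k+1})$ and the partition $P$, produces $g\in G$ with $\{a_1,\dots,a_k\}g$ a transversal of $P$ and $a_1g,a_{k+1}g$ in the same part; feeding this $g$ together with $t$ (whose kernel is $P$) into the semigroup yields a rank $k$ map, a power of which is an idempotent $f$ whose image agrees with $e$'s on $a_2,\dots,a_k$ but has $a_{k+1}$ in place of $a_1$, and with $ef=f$, $fe'=e$ for appropriate idempotents — so $e$ and $f$ are joined by idempotents in the same $\mathcal{L}$-class. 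Iterating such single-point moves, and using $k$-homogeneity to reach an arbitrary $k$-subset as image, connects all rank $k$ idempotents with a given kernel; a dual argument (moving parts of the kernel rather than points of the image, again via the strong property read in the kernel) connects those with a given image. Combining, the entire set of rank $k$ idempotents generates a single subsemigroup containing all of them, and by the reduction above this subsemigroup is all of $\langle G,t\rangle\setminus G$.

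The main obstacle I expect is the bookkeeping in the "single-point move" step: one must check that the power of the rank $k$ product that is idempotent really has the prescribed image (it fixes its image pointwise, but one needs that image to be the intended $(k-1)$-set plus $a_{k+1}$, which is exactly what the transversal condition from strong $k$-ut guarantees), and that the resulting idempotents multiply in the way needed to witness the $\mathcal{L}$- (or $\mathcal{R}$-) relation — i.e. the elementary fact that in a regular $\mathcal{D}$-class two idempotents in the same $\mathcal{R}$- or $\mathcal{L}$-class have product equal to one of them, so the generated subsemigroup grows to contain the connecting idempotents rather than just their products. One must also be slightly careful that the points $a_{k+1}$ we introduce can be chosen outside the current image, which uses $n\ge 2k+1$-type room; since the hypothesis is only $k\le n/2$, the degenerate case $n=2k$ may need a small separate remark, handled by observing that then the kernel classes and their transversals play symmetric roles and the dual moves still suffice. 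Everything else — the $\mathcal{D}$-class structure of $\langle G,t\rangle\setminus G$, the existence of rank $k$ idempotents (Theorem~\ref{utid}, since strong $k$-ut trivially implies $k$-ut) — is standard or already available.
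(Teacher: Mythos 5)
There is a genuine gap at the final step, ``Combining, the entire set of rank $k$ idempotents generates a single subsemigroup containing all of them, and by the reduction above this subsemigroup is all of $\langle G,t\rangle\setminus G$.'' Connecting all the rank $k$ idempotents by chains of $\mathcal{L}$- and $\mathcal{R}$-relations shows that the idempotent-generated subsemigroup \emph{meets} every group $\mathcal{H}$-class of the rank $k$ layer, but not that it contains every element of those classes. Each such $\mathcal{H}$-class is a copy of $S_k$ (the $k!$ maps with a fixed kernel $P$ and a fixed transversal $S$ as image), and products of $\mathcal{L}$- or $\mathcal{R}$-related idempotents only return one of the two factors ($ef=e$ and $fe=f$ when $e\mathrel{\mathcal{L}}f$); to reach a \emph{non-identity} element of the $\mathcal{H}$-class you must traverse a closed path in your connectivity graph and show that the resulting holonomy generates all of $S_k$. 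By Graham's normal form (Theorem~\ref{gform}) and Gray's criterion (Theorem~\ref{gray}), which the paper invokes for $k=2$, connectivity is necessary but not sufficient: the sandwich-matrix entries must also generate the group. Your proposal never addresses this; in particular it does not show that the map sending $[a_1]_P\mapsto a_2$, $[a_2]_P\mapsto a_1$ and fixing the remaining image points is a product of idempotents. That is precisely the hard part of the paper's proof: Lemmas~\ref{6}--\ref{8} use the strong $k$-ut property not for single-point image moves but to manufacture an idempotent whose kernel \emph{merges} $a_1$ and $a_2$ into one class while splitting off an auxiliary point $c$, and the product $\varepsilon\varepsilon_2\varepsilon_3\varepsilon_4$ of four such idempotents realizes the transposition $(a_1\ a_2)$ inside the $\mathcal{H}$-class. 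Without this, your argument shows only that $\langle G,t\rangle\setminus G$ contains many idempotents, not that it is idempotent-generated.

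Two smaller points. The rank $k$ elements form the top $\mathcal{J}$-class, not the minimal ideal (composition can only decrease rank); the correct reduction, which the paper uses, is that $\langle G,t\rangle\setminus G$ is generated by the rank $k$ elements $gth$ with $g,h\in G$, so it suffices to write each of these as a product of idempotents. Moreover, among those generators are maps whose image is \emph{not} a transversal of their kernel, lying in null $\mathcal{H}$-classes; these require the separate treatment the paper gives (moving the image one point at a time into transversal position, then correcting by an element of a group $\mathcal{H}$-class already known to be idempotent-generated), which your remark that ``$g$ can be absorbed'' does not supply.
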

	
This theorem will be proved in a sequence of lemmas. 
We start by introducing some notation. Suppose  $P$ is a partition of a set $X$ and $b\in X$. Then the part in $P$ containing $b$ is represented by $[b]_P$. 

\begin{lemma}\label{5}
Let $G$ be a $k$-homogeneous group and let 
\[
\varepsilon:=\left(
\begin{array}{cccccc}
[a_1,x_1]_P&\ldots&[a_i,x_i]_P&[a_{i+1}]_p&\ldots &[a_k]_P\\
a_1&\ldots&a_i&a_{i+1}&\ldots &a_k
\end{array}
\right)
\]
be an idempotent with image and kernel classes as above. Then there exists in $\langle G,\varepsilon\rangle$ and idempotent of the form
\[\left(
\begin{array}{cccccc}
[a_1,x_1]_P&\ldots&[a_i,x_i]_P&[a_{i+1}]_p&\ldots &[a_k]_P\\
x_1&\ldots&x_i&a_{i+1}&\ldots &a_k
\end{array}
\right).
\]
\end{lemma}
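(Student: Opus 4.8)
The plan is to realise the target idempotent as a power of $\varepsilon g$ for a single well-chosen $g\in G$, using only that a $k$-homogeneous group is transitive on $k$-subsets. Write $I=\{a_1,\ldots,a_k\}=\Im(\varepsilon)$ and $J=\{x_1,\ldots,x_i,a_{i+1},\ldots,a_k\}$, and note that $P=\ker(\varepsilon)$ has exactly $k$ classes, namely those listed in the top row. First I would record the two facts that make everything work: $I$ is a transversal for $P$ (since $\varepsilon$ is idempotent, $a_j$ lies in its own kernel class, $[a_j,x_j]_P$ for $j\le i$ and $[a_j]_P$ for $j>i$), and $J$ is also a transversal for $P$ of size exactly $k$, because $x_j\in[a_j,x_j]_P$ for $j\le i$ while $a_j\in[a_j]_P$ for $j>i$, and these $k$ classes are pairwise distinct.

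Next, by $k$-homogeneity I would choose $g\in G$ with $Ig=J$ and set $f=\varepsilon g$. Since $g$ is a bijection, $\ker(f)=\ker(\varepsilon)=P$ and $\Im(f)=\Im(\varepsilon)g=J$. Because $J$ is a transversal for $P=\ker(f)$, the restriction of $f$ to $J$ is injective, and as its image is $J$ it is in fact a permutation of $J$; let $r$ be its order. Then $f^r$ fixes $J$ pointwise, so $J\subseteq\Im(f^r)\subseteq\Im(f)=J$, whence $f^r$ has image $J$, fixes it pointwise, and is therefore an idempotent; moreover $\ker(f^r)\supseteq P$, and since $f^r$ has rank $k$ its kernel has only $k$ classes, so $\ker(f^r)=P$.

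It then remains to identify $f^r$: an idempotent is determined by its kernel together with its image, since it must send each kernel class to the unique element of the image lying in that class. Hence $f^r$ sends $[a_j,x_j]_P\mapsto x_j$ for $j\le i$ and $[a_j]_P\mapsto a_j$ for $j>i$, which is precisely the displayed map; and $f^r\in\langle G,\varepsilon\rangle$ since $f=\varepsilon g$ is. I do not anticipate a real obstacle: the only points needing care are checking that $J$ is a genuine $k$-element transversal for $P$ (so that $k$-homogeneity applies to the pair $(I,J)$ and $f$ restricts to a permutation of $J$) and the routine observation just made about idempotents. The degenerate case $x_j=a_j$ for some $j\le i$ causes no trouble, as the asserted idempotent then simply agrees with $\varepsilon$ in that coordinate.
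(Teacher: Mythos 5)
Your proposal is correct and follows essentially the same route as the paper: choose $g$ by $k$-homogeneity mapping $\{a_1,\ldots,a_k\}$ onto $\{x_1,\ldots,x_i,a_{i+1},\ldots,a_k\}$ and take a suitable power of $\varepsilon g$ to obtain the idempotent. You merely spell out the details the paper leaves implicit, namely why the power exists (the restriction of $\varepsilon g$ to its image is a permutation) and why the resulting idempotent is the displayed one (an idempotent is determined by its kernel and its transversal image).
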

\begin{proof}
Since $G$ is $k$-homogeneous, pick $g\in G$ such that $\{a_1,\ldots,a_i,a_{i+1},\ldots,a_k\}g=\{x_1,\ldots,x_i,a_{i+1},\ldots, a_k\}$. Then 
\[
\varepsilon g=\left(
\begin{array}{cccccc}
[a_1,x_1]_P&\ldots&[a_i,x_i]_P&[a_{i+1}]_p&\ldots &[a_k]_P\\
y_{1\sigma}&\ldots&y_{i\sigma}&y_{(i+1)\sigma}&\ldots &y_{k\sigma}
\end{array}
\right),
\]
where $\{y_1,\ldots,y_k\}=\{x_1,\ldots,x_i,a_{i+1},\ldots, a_k\}$ and $\sigma\in S_k$. Therefore, for some natural $\omega$, we have 
\[
(\varepsilon g)^\omega=\left(
\begin{array}{cccccc}
[a_1,x_1]_P&\ldots&[a_i,x_i]_P&[a_{i+1}]_p&\ldots &[a_k]_P\\
x_1&\ldots&x_i&a_{i+1}&\ldots& a_k
\end{array}
\right),
\]
as required.\qed
\end{proof}

 Suppose $Q$ is a partition of $X$ and $B$ is a part in $Q$; suppose that  $ag\in B$, for some $a\in X$ and $g\in G$. Then $a\in Bg^{-1}$ and hence $Bg^{-1}=[a]_{Pg^{-1}}$, the part containing $a$ in the partition $Pg^{-1}$. In short, if $P:=(B,\ldots)$ is a partition and $ag\in B$, then $Pg^{-1}=([a]_{Pg^{-1}},\ldots)$.

\begin{lemma}\label{6}
Let $G$ be a $k$-homogeneous group possessing the strong $k$-ut property, and let 
\[
\varepsilon:=\left(
\begin{array}{cccccc}
[a_1,c]_P&[a_2]_P&[a_3]_P&\ldots & [a_k]_P\\
   a_1      & a_2     &a_3      &\ldots  & a_k
\end{array}
\right)
\]
be an idempotent with image and kernel classes as above. Then there exists in $\langle G,\varepsilon\rangle$ and idempotent of the form
\[
\left(
\begin{array}{cccccc}
[a_1,a_2]_Q&[c]_Q&[a_3]_Q&\ldots & [a_k]_Q\\
   a_1      & c     &a_3&\ldots  & a_k
\end{array}
\right),
\]
for some partition $Q$.
\end{lemma}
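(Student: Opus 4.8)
The plan is to build the required idempotent in two stages: first replace $\varepsilon$ by a conjugate whose \emph{kernel} already has the shape demanded by the statement, and then invoke Lemma~\ref{5} to correct the \emph{image}. The only feature not supplied by plain $k$-homogeneity is the ability to push two prescribed points, here $a_1$ and $a_2$, into a single part of a given partition, and this is exactly the role played by the strong $k$-ut property. Throughout, write $P=(A_1,\dots,A_k)$ with $A_1=[a_1,c]_P$ and $A_i=[a_i]_P$ for $i\ge 2$, set $I=\{a_1,\dots,a_k\}$ (the image of $\varepsilon$), and note that $c\ne a_1$ may be assumed, since otherwise the stated form of the conclusion is not even well defined.

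First I would apply the strong $k$-ut property to the $(k+1)$-tuple $(a_1,c,a_3,a_4,\dots,a_k,a_2)$, whose entries are pairwise distinct because $c\in A_1$ forces $c\ne a_i$ for $i\ge 2$, together with the ordered partition $(A_1,\dots,A_k)$. This yields $g\in G$ such that $\{a_1,c,a_3,\dots,a_k\}g$ is a section of $P$ while $a_1g$ and $a_2g$ lie in a common part. Reading off which part of $P$ each of $a_1g,cg,a_3g,\dots,a_kg$ falls into gives a permutation $j,l,m_3,\dots,m_k$ of $1,\dots,k$ with $a_1g,a_2g\in A_j$, $cg\in A_l$ and $a_ig\in A_{m_i}$. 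Put $Q:=Pg^{-1}$ with classes $C_p:=A_pg^{-1}$; then $\{a_1,a_2\}\subseteq C_j$, $c\in C_l$ and $a_i\in C_{m_i}$, so $Q$ already has the shape $\{[a_1,a_2]_Q,[c]_Q,[a_3]_Q,\dots,[a_k]_Q\}$ required in the statement, and $\{a_1,c,a_3,\dots,a_k\}$ is a section of $Q$, having one point in each of the $k$ distinct classes $C_j,C_l,C_{m_3},\dots,C_{m_k}$.

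Now $\varepsilon':=g\varepsilon g^{-1}$ is an idempotent of $\langle G,\varepsilon\rangle$ with kernel $Q$, image $Ig^{-1}$, sending each class $C_p$ to $a_pg^{-1}$. Since $\{a_1,c,a_3,\dots,a_k\}$ is a section of $\ker\varepsilon'=Q$, Lemma~\ref{5}, applied to $\varepsilon'$ with every class allowed to change its chosen transversal point, produces an idempotent of $\langle G,\varepsilon'\rangle\subseteq\langle G,\varepsilon\rangle$ with kernel $Q$ and image $\{a_1,c,a_3,\dots,a_k\}$. Being idempotent it fixes its image pointwise, so it sends $C_j=[a_1,a_2]_Q$ to $a_1$, $C_l=[c]_Q$ to $c$ and $C_{m_i}=[a_i]_Q$ to $a_i$; this is precisely the idempotent in the statement.

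The obstacle I would expect is the temptation to produce the target directly as $g\varepsilon g''$ with $g,g''\in G$: that would force $g''$ to realise a \emph{specific} bijection between two $k$-subsets, which $k$-homogeneity does not provide, as one would need $k$-transitivity. Splitting the task into two steps --- conjugating to fix the kernel, then applying Lemma~\ref{5} to fix the image --- removes this difficulty, since the proof of Lemma~\ref{5} uses only $k$-homogeneity. The genuinely essential use of the strong $k$-ut property is the assertion that $\{a_1,c,a_3,\dots,a_k\}$ is a \emph{section} of $Q$, rather than merely a $k$-subset, which is exactly what makes Lemma~\ref{5} applicable.
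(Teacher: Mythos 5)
Your proof is correct and follows essentially the same route as the paper: both apply the strong $k$-ut property to the tuple $(a_1,c,a_3,\ldots,a_k,a_2)$ to obtain $g$ and set $Q=Pg^{-1}$, and then adjust the image to $\{a_1,c,a_3,\ldots,a_k\}$. The only (cosmetic) difference is that the paper works with $(g\varepsilon h)^{\omega}$ for a suitable $h$ given by $k$-homogeneity, whereas you conjugate to $g\varepsilon g^{-1}$ and delegate the image adjustment to Lemma~\ref{5}, whose proof uses the same multiply-and-take-a-power trick.
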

\begin{proof}
By the strong $k$-ut property, given the tuple $(a_1,c,a_3\ldots,a_k,a_2)$, there exists $g\in G$ such that $\{a_1,c,a_3\ldots, a_k\}g$ is a section for $P$ and $a_1g,a_2g\in [a_i]_P$; therefore, $Q:=Pg^{-1}=([a_1,a_2]_Q,[c]_Q,[a_3]_Q,\ldots,[a_k]_Q)$.  Thus 
\[
g\varepsilon =\left(
\begin{array}{cccccc}
[a_1,a_2]_{Q}&[c]_{Q}&[a_3]_{Q}&\ldots &[a_k]_{Q}\\
y_{1}&y_{2}&y_{3}&\ldots &y_{k}
\end{array}
\right),
\]
where $\{y_1,\ldots,y_k\}=\{a_1,\ldots, a_k\}$. Therefore, for some $h\in G$, we have $\{y_1,\ldots,y_k\}h=\{a_1,c,a_3,\ldots, a_k\}$ and hence, for some natural $\omega$, we have 
\[
(g\varepsilon h)^\omega=\left(
\begin{array}{cccccc}
[a_1,a_2]_{Q}&[c]_{Q}&[a_3g]_{Q}&\ldots &[a_kg]_{Q}\\
a_{1}                      & c                  &a_3                  &\ldots &a_k
\end{array}
\right),
\]
as required.\qed
\end{proof}
	
	If, in the end of the proof above, we pick $h'\in G$ such  that $\{y_1,\ldots,y_k\}h=\{c,a_2,a_3,\ldots, a_k\}$, then $(g\varepsilon h')^\omega$ will have the form 
\[
(g\varepsilon h)^\omega=\left(
\begin{array}{cccccc}
[a_1,a_2]_{Q}&[c]_{Q}&[a_3g]_{Q}&\ldots &[a_kg]_{Q}\\
a_{2}                      & c                  &a_3                  &\ldots &a_k
\end{array}
\right).
\]
Thus we immediately get the following.

\begin{lemma}\label{7}
Let $G$ be a $k$-homogeneous group possessing the strong $k$-ut property, and let 
\[
\varepsilon:=\left(
\begin{array}{cccccc}
[a_1,c]_P&[a_2]_P&[a_3]_P&\ldots & [a_k]_P\\
   a_1      & a_2     &a_3      &\ldots  & a_k
\end{array}
\right)
\]
be an idempotent with image and kernel classes as above. Then there exists in $\langle G,\varepsilon\rangle$ and idempotent of the form
\[
\left(
\begin{array}{cccccc}
[a_1,a_2]_Q&[c]_Q&[a_3]_Q&\ldots & [a_k]_Q\\
   a_2      & c     &a_3&\ldots  & a_k
\end{array}
\right),
\]
for some partition $Q$.
\end{lemma}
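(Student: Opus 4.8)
The plan is to follow the template established in the proofs of Lemmas~\ref{6} and Lemma~\ref{7}, since Lemma~\ref{7} is in fact an immediate corollary of the computation done for Lemma~\ref{6}. The only change needed at the very end of the proof of Lemma~\ref{6} is in the choice of the final group element: instead of picking $h\in G$ with $\{y_1,\ldots,y_k\}h=\{a_1,c,a_3,\ldots,a_k\}$ (which returns $a_1$ to the first coordinate), one picks $h'\in G$ with $\{y_1,\ldots,y_k\}h'=\{c,a_2,a_3,\ldots,a_k\}$, which sends the element currently sitting in the kernel class $[a_1,a_2]_Q$ to $a_2$ rather than to $a_1$. Since $G$ is $k$-homogeneous, such an $h'$ exists, and since $\{a_1,\ldots,a_k\}$ and $\{c,a_2,\ldots,a_k\}$ are both $k$-sets, the element $g\varepsilon h'$ has image $\{a_2,c,a_3,\ldots,a_k\}$ (as a $k$-set) and the same kernel $Q$ as $g\varepsilon$.

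First I would record that $g\varepsilon$ has kernel $Q:=Pg^{-1}=([a_1,a_2]_Q,[c]_Q,[a_3]_Q,\ldots,[a_k]_Q)$ exactly as in Lemma~\ref{6} — this part of the argument is verbatim, using the strong $k$-ut property applied to the tuple $(a_1,c,a_3,\ldots,a_k,a_2)$. Then I would note that $g\varepsilon h'$ maps the $k$-set $\{a_1,\ldots,a_k\}$ onto the $k$-set $\{c,a_2,\ldots,a_k\}$, so it maps the set of kernel classes of $Q$ onto itself (as a set), permuting them by some $\sigma\in S_k$ and carrying representatives to representatives. Hence, for a suitable natural number $\omega$, the power $(g\varepsilon h')^\omega$ is an idempotent stabilising each kernel class of $Q$ and fixing each of its image points, which is precisely the map
\[
\left(
\begin{array}{cccccc}
[a_1,a_2]_{Q}&[c]_{Q}&[a_3]_{Q}&\ldots &[a_k]_{Q}\\
a_{2}&c&a_3&\ldots &a_k
\end{array}
\right),
\]
as required.

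I do not expect any genuine obstacle here: the lemma is stated immediately after the displayed remark following Lemma~\ref{6} precisely because that remark already contains the entire argument, and the statement is flagged there with ``Thus we immediately get the following.'' The only thing to be careful about is the bookkeeping of which concrete element of the image is moved where — in particular verifying that the element of $[a_1,a_2]_Q$ that ends up being a fixed point of the idempotent is $a_2$ and not $a_1$, which is exactly what the switch from $h$ to $h'$ achieves. One should also remark, as in the earlier lemmas, that taking a suitable power to pass from a map permuting the image $k$-set to an idempotent fixing it pointwise is legitimate because the image size $k$ is preserved throughout, so no collapse in rank occurs.
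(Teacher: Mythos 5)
Your proposal is correct and follows exactly the paper's route: the paper derives Lemma~\ref{7} from the remark immediately after the proof of Lemma~\ref{6}, replacing the final choice of $h$ (with $\{y_1,\ldots,y_k\}h=\{a_1,c,a_3,\ldots,a_k\}$) by an $h'$ with $\{y_1,\ldots,y_k\}h'=\{c,a_2,a_3,\ldots,a_k\}$ and then raising $g\varepsilon h'$ to a suitable power. Your additional check that $\{a_2,c,a_3,\ldots,a_k\}$ is a transversal for $Q$ (so that no rank collapse occurs when taking the power) is exactly the bookkeeping the paper leaves implicit.
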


\begin{lemma}\label{8}
Let $G$ be a $k$-homogeneous group possessing the strong $k$-ut property, and let 
\[
\varepsilon:=\left(
\begin{array}{cccccc}
[a_1]_P&[a_2]_P&[a_3]_P&\ldots & [a_k]_P\\
   a_1      & a_2     &a_3      &\ldots  & a_k
\end{array}
\right)
\]
be an idempotent with image and kernel classes as above. Then there exists in $\langle G,\varepsilon\rangle$ a map of the form
\[
\left(
\begin{array}{cccccc}
[a_1,c]_Q&[a_2]_Q&[a_3]_Q&\ldots & [a_k]_Q\\
   a_1      & a_2   &a_3&\ldots  & a_k
\end{array}
\right),
\]
for some partition $Q$.
\end{lemma}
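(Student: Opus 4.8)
The plan is to run the previous constructions ``in reverse'': Lemmas \ref{6} and \ref{7} take an idempotent whose first kernel class is a $2$-set $[a_1,c]_P$ and produce, inside $\langle G,\varepsilon\rangle$, idempotents of the same rank but with the point $c$ split off into its own kernel class; here I want to start from the rank-$k$ idempotent all of whose kernel classes are singletons at the level of the marked points $a_1,\dots,a_k$ and \emph{acquire} an extra point $c$ glued to $[a_1]$. So the first step is to choose the point $c$: since $k\le n/2$ and $\varepsilon$ has rank $k$, the kernel $P$ has $k$ classes covering all $n$ points, so some class other than $[a_1]_P$ (or possibly $[a_1]_P$ itself, if it is non-trivial) contains a point we can use — in any case there is a point $c\notin\{a_2,\dots,a_k\}$ with $c\ne a_1$, because $n>k$. (If $[a_1]_P$ already has size $\ge2$ we are essentially done, so assume all classes among the $a_i$ are as drawn; the honest statement is just that $n>k$ forces a spare point $c$.)

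Next I would apply the strong $k$-ut property to the $(k+1)$-tuple $(a_1,a_2,a_3,\dots,a_k,c)$ and the partition $P$: there is $g\in G$ with $\{a_1,\dots,a_k\}g$ a section for $P$ and $a_1g,\,cg$ in the same part, so that $Q:=Pg^{-1}$ has classes $[a_1,c]_Q,[a_2]_Q,\dots,[a_k]_Q$ exactly as in the target, using the ``$Pg^{-1}=([a]_{Pg^{-1}},\dots)$'' remark recorded before Lemma \ref{6}. Then $g\varepsilon$ is a rank-$k$ map with domain partition $Q$ and image $\{a_1,\dots,a_k\}$ (up to a permutation $\sigma\in S_k$ of the outputs): concretely
\[
g\varepsilon=\left(\begin{array}{cccc}[a_1,c]_Q&[a_2]_Q&\cdots&[a_k]_Q\\ y_1&y_2&\cdots&y_k\end{array}\right),
\]
with $\{y_1,\dots,y_k\}=\{a_1,\dots,a_k\}$. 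Since $G$ is $k$-homogeneous, pick $h\in G$ with $\{y_1,\dots,y_k\}h=\{a_1,\dots,a_k\}$ restoring the correct labels; a suitable power $(g\varepsilon h)^\omega$ then has each $a_i$ as a fixed point of the induced action on the image, giving precisely the displayed map with domain partition $Q$. Note we do not claim this map is idempotent — the statement only asks for ``a map of the form'', which is why the argument is allowed to be slightly cheaper than in Lemmas \ref{6}--\ref{7}.

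The main obstacle is really a bookkeeping one: making sure the spare point $c$ can be chosen disjoint from $\{a_2,\dots,a_k\}$ and distinct from $a_1$ so that the target partition $Q$ has the stated shape, and checking that the permutation $\sigma$ introduced by $g\varepsilon$ can be killed simultaneously by a single $h\in G$ followed by taking a power — this is exactly the $k$-homogeneity trick already used twice above, so it goes through verbatim. A minor point to watch is the degenerate case where $[a_1]_P$ is already non-singleton in $\varepsilon$: then the conclusion may be reached with $g=1$, and one should phrase the argument so that it covers this uniformly (e.g. by just allowing $c\in[a_1]_P$). No new group-theoretic input beyond $k$-homogeneity and strong $k$-ut is needed.
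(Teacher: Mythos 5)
Your argument is correct and essentially identical to the paper's: both find a spare point $c\notin\{a_1,\ldots,a_k\}$ by pigeonhole (after disposing of the trivial case $|[a_1]_P|>1$), apply the strong $k$-ut property to the tuple $(a_1,\ldots,a_k,c)$ and the partition $P$, and set $Q=Pg^{-1}$. The only difference is cosmetic: since the image of $g\varepsilon$ is already the set $\{a_1,\ldots,a_k\}$, your auxiliary $h$ may be taken to be the identity and a power of $g\varepsilon$ alone already gives the required (in fact idempotent) map, which is if anything cleaner than the paper's closing display.
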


\begin{proof}
Suppose that $|[a_1]_P|>1$; then the result follows. Otherwise, since $k\le n/2$, there must be one $a_i\in Xt$ such that $c\in [a_i]_P\setminus \{a_i\}$. Thus, for the tuple $(a_1,a_2,\ldots,a_k,c)$, there exists $g\in G$ such that $a_1g,cg\in [a_j]_P$ (for some $j$) and $\{a_1,\ldots,a_k\}g$ is a section for $P$. Therefore, for $Q:=Pg^{-1}$, we have $Q=([a_1,c],[a_2],\ldots,[a_k])$. Thus,
\[
g^{-1}\varepsilon g=\left(
\begin{array}{cccccc}
[a_1,c]_Q&[a_2]_Q&[a_3]_Q&\ldots & [a_k]_Q\\
   a_1      & a_2   &a_3&\ldots  & a_k
\end{array}
\right),
\] as required.\qed
\end{proof}

\begin{lemma}
Let $G$ be a $k$-homogeneous group possessing the strong $k$-ut property, and let 
\[
\varepsilon :=\left(
\begin{array}{cccccc}
[a_1]_P&[a_2]_P&[a_3]_P&\ldots & [a_k]_P\\
   a_1      & a_2     &a_3      &\ldots  & a_k
\end{array}
\right)
\] 
be an idempotent with image and kernel classes as above. Then the following map 
\[
\left(
\begin{array}{cccccc}
[a_1]_P&[a_2]_P&[a_3]_P&\ldots & [a_k]_P\\
   a_2      & a_1   &a_3&\ldots  & a_k
\end{array}
\right)
\]
can be written as a product of idempotents in  $\langle G,\varepsilon\rangle$.
\end{lemma}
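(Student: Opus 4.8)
The starting observation is that $\tau=\varepsilon\sigma$, where $\sigma=(a_1\,a_2)\in\Sym$; hence, since $\varepsilon$ is idempotent, it suffices to exhibit some $\beta\in\langle G,\varepsilon\rangle$ which is a product of idempotents and which agrees with $\sigma$ on $\{a_1,\dots,a_k\}$ (then $\beta$ is injective there, so $\ker(\varepsilon\beta)=\ker\varepsilon=P$ and $\varepsilon\beta=\tau$). Equivalently, since $\tau$ itself has kernel $P$, it is enough to write $\tau$ as a product of \emph{rank-$k$} idempotents of $\langle G,\varepsilon\rangle$. Here a short argument isolates exactly which those are: any rank-$k$ element of $\langle G,\varepsilon\rangle$ can be written $g_0u$ with $g_0\in G$ and $u$ a rank-$k$ element beginning with $\varepsilon$, and then $\ker u=\ker\varepsilon=P$, so the element has kernel $Pg_0^{-1}$; conversely, for any $g\in G$ the idempotent $g^{-1}\varepsilon g$ has kernel $Pg$, and by Lemma~\ref{5} its image may be moved arbitrarily within the parts of $Pg$. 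So the rank-$k$ idempotents of $\langle G,\varepsilon\rangle$ are \emph{precisely} the idempotents whose kernel lies in the $G$-orbit of $P$, and the lemma is the claim that $\tau$ is a product of such idempotents.

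To prove this I would first reduce to the case where the part $[a_1]_P$ is non-trivial. Since $k\le n/2$, $P$ has a non-trivial part; if neither $[a_1]_P$ nor $[a_2]_P$ is non-trivial, apply Lemma~\ref{8} to obtain an idempotent $\varepsilon^{\ast}\in\langle G,\varepsilon\rangle$ with image $\{a_1,\dots,a_k\}$ fixed pointwise and whose kernel $P^{\ast}$ has the part of $a_1$ non-trivial; as $\varepsilon^{\ast}$ fixes $\{a_1,\dots,a_k\}$ pointwise, the ``swap'' map $\tau^{\ast}$ attached to $\varepsilon^{\ast}$ satisfies $\tau^{\ast}|_{\{a_1,\dots,a_k\}}=\sigma$, whence $\tau=\varepsilon\tau^{\ast}$; and $\tau^{\ast}$ is a product of idempotents of $\langle G,\varepsilon^{\ast}\rangle\subseteq\langle G,\varepsilon\rangle$ by the case we are reducing to, applied to $\varepsilon^{\ast}$. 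So assume $c\in[a_1]_P\setminus\{a_1\}$. Now I would produce $\tau$ as a short product $\varepsilon\,\rho_1\,\rho_2\cdots\varepsilon$ of idempotents of $\langle G,\varepsilon\rangle$ whose first and last factors have kernel $P$, while $\rho_1,\rho_2,\dots$ are idempotents whose kernels are the $G$-translates of $P$ obtained by relocating $a_1$, $a_2$ and $c$ and whose routings are fixed by Lemma~\ref{5}; one then checks that the composite ``telescopes'' to $\tau$, a routine verification of the kind performed for $G=S_n$ and the computation underlying Lemmas~\ref{6} and~\ref{7}. The $G$-translates of $P$ and the transversals needed along the way are supplied by the strong $k$-ut property via Lemmas~\ref{6}--\ref{8}.

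The hard part is bookkeeping rather than any single idea. Two things need care: (i) every factor must have rank exactly $k$, so that the product does not lose rank and still has kernel $P$; and (ii) every factor must genuinely lie in $\langle G,\varepsilon\rangle$, i.e.\ its kernel must be a $G$-translate of $P$ and not merely a partition of the same shape. Point~(ii) is the real constraint: because a $k$-homogeneous $G$ need not induce the full symmetric group on $\{a_1,\dots,a_k\}$ (for some of the relevant primitive groups it induces only a cyclic group), the transposition $\sigma$ cannot be obtained by conjugating $\varepsilon$ by a single element of $G$ and must be assembled by routing $\tau$ through several $G$-translates of $P$. It is exactly for manufacturing, inside $\langle G,\varepsilon\rangle$, idempotents with the required kernels — in particular ones in which $a_1$, $a_2$ or $c$ has been moved into a part of prescribed size — that Lemmas~\ref{6}--\ref{8}, and hence the strong $k$-ut property, are needed; $k$-homogeneity alone does not suffice.
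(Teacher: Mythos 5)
Your skeleton matches the paper's: reduce via Lemma~\ref{8} to the case where $a_1$ lies in a non-trivial kernel class, observe that the rank-$k$ idempotents available in $\langle G,\varepsilon\rangle$ are exactly those whose kernel is a $G$-translate of $P$ (with image adjustable by Lemma~\ref{5}), and then assemble the transposition on the image from such idempotents. Those preliminary observations, and the reduction $\tau=\varepsilon\tau^{\ast}$, are all correct. But the argument stops exactly where the content of the lemma begins: ``produce $\tau$ as a short product $\varepsilon\,\rho_1\,\rho_2\cdots$ \dots\ one then checks that the composite telescopes to $\tau$'' never specifies the $\rho_i$, and the telescoping is not an automatic consequence of having idempotents whose kernels ``relocate $a_1$, $a_2$ and $c$'' --- one must choose \emph{which} relocations and \emph{which} images within the parts so that the composition really effects the swap, and that choice is the proof. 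As written, this is an announcement of the result rather than a derivation of it.

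For comparison, the paper's explicit chain is: $\varepsilon_2$ with kernel $Q$ merging $a_1$ with $c$ and image $(c,a_2,a_3,\ldots,a_k)$ (Lemma~\ref{8} followed by Lemma~\ref{5}); $\varepsilon_3$ with kernel merging $a_1$ with $a_2$ while isolating $c$, and image $(a_1,c,a_3,\ldots,a_k)$ (Lemma~\ref{6}); and $\varepsilon_4$ with kernel merging $a_2$ with $c$ and image $(a_2,a_1,a_3,\ldots,a_k)$ (Lemma~\ref{8} again). Then $\varepsilon_2\varepsilon_3\varepsilon_4$ sends $[a_1,c]_Q\mapsto c\mapsto c\mapsto a_2$ and $[a_2]_Q\mapsto a_2\mapsto a_1\mapsto a_1$, so $\varepsilon\varepsilon_2\varepsilon_3\varepsilon_4$ is the required map. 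The step your sketch most conspicuously leaves unaddressed is the middle one: nothing you write certifies that an idempotent of $\langle G,\varepsilon\rangle$ whose kernel puts $a_1$ and $a_2$ together while keeping the auxiliary point $c$ in a part of its own actually exists --- that is precisely Lemma~\ref{6}, it is the one place the strong $k$-ut hypothesis (applied to the tuple $(a_1,c,a_3,\ldots,a_k,a_2)$) is indispensable, and without it no arrangement of the remaining factors will interchange $a_1$ and $a_2$. So the gap is not ``bookkeeping'': it is the explicit three-idempotent routing through $c$, which you would need to write out and verify.
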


\begin{proof}	
By Lemma \ref{8}, there exists an idempotent map 
\[\varepsilon_1=
\left(
\begin{array}{cccccc}
[a_1,c]_Q&[a_2]_Q&[a_3]_Q&\ldots & [a_k]_Q\\
   a_1      & a_2   &a_3&\ldots  & a_k
\end{array}
\right)\in \langle G,\varepsilon\rangle.
\]

By Lemma \ref{5} there exists an idempotent map 
\[\varepsilon_2=
\left(
\begin{array}{cccccc}
[a_1,c]_Q&[a_2]_Q&[a_3]_Q&\ldots & [a_k]_Q\\
   c      & a_2   &a_3&\ldots  & a_k
\end{array}
\right)\in \langle G,\varepsilon_1\rangle\subseteq \langle G,\varepsilon\rangle.
\]

By Lemma \ref{6} there exists an idempotent map 
\[\varepsilon_3=
\left(
\begin{array}{cccccc}
[a_1,a_2]_Q&[c]_Q&[a_3]_Q&\ldots & [a_k]_Q\\
   a_1     & c   &a_3&\ldots  & a_k
\end{array}
\right)\in \langle G,\varepsilon_1\rangle\subseteq \langle G,\varepsilon\rangle.
\]

By Lemma \ref{8}, there exists an idempotent map 
\[\varepsilon_4=
\left(
\begin{array}{cccccc}
[a_2,c]_R&[a_1]_R&[a_3]_R&\ldots & [a_k]_R\\
   a_2      & a_1   &a_3&\ldots  & a_k
\end{array}
\right)\in \langle G,\varepsilon\rangle.
\]

Now $\varepsilon_2\varepsilon_3\varepsilon_4$ gives the mapping 
\[
\left(
\begin{array}{cccccc}
[a_1,c]_Q&[a_2]_Q&[a_3]_Q&\ldots & [a_k]_Q\\
   a_2      & a_1   &a_3&\ldots  & a_k
\end{array}
\right)\in \langle G,\varepsilon\rangle.
\]
Finally,  $\varepsilon \varepsilon_2\varepsilon_3\varepsilon_4$ gives 
\[
\left(
\begin{array}{cccccc}
[a_1]_P&[a_2]_P&[a_3]_P&\ldots & [a_k]_P\\
   a_2      & a_1   &a_3&\ldots  & a_k
\end{array}
\right)\in \langle G,\varepsilon\rangle,
\]as required.\qed
\end{proof}

\begin{lemma}
Let $G$ be a $k$-homogeneous group possessing the strong $k$-ut property, and let 
\[
q :=\left(
\begin{array}{cccccc}
[a_1]_P&[a_2]_P&[a_3]_P&\ldots & [a_k]_P\\
   a_{1\sigma}      & a_{2\sigma}     &a_{3\sigma}      &\ldots  & a_{k\sigma}
\end{array}
\right)
\] 
be a map with image and kernel classes as above. Let  
\[
\varepsilon=\left(
\begin{array}{cccccc}
[a_1]_P&[a_2]_P&[a_3]_P&\ldots & [a_k]_P\\
   a_1      & a_2   &a_3&\ldots  & a_k
\end{array}
\right)
\]
and $x,y\in \{a_1,\ldots,a_k\}$. Then the map $q(x\ y)$ can be written as $qu$, where $u$ is a product of idempotents in  $\langle G,\varepsilon\rangle$.
\end{lemma}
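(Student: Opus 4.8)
The plan is to reduce the statement to the previous lemma, which already handles the case where $q$ is the identity-valued idempotent $\varepsilon$ and $(x\ y)$ is a transposition. First I would observe that since $q$ and $\varepsilon$ have the same kernel $P$ and the same image $\{a_1,\ldots,a_k\}$ (merely permuted by $\sigma$), and since $G$ is $k$-homogeneous, there is $g\in G$ with $\{a_1,\ldots,a_k\}g=\{a_1,\ldots,a_k\}$ inducing on the image exactly the permutation needed so that $\varepsilon g=q$; more precisely, $q=\varepsilon g$ where $g$ fixes (setwise) the $k$-set $\{a_1,\ldots,a_k\}$ and acts on it as $\sigma$ composed with any convenient permutation. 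Hence it suffices to realise $q(x\ y)$ as $\varepsilon g (x\ y)$.

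Next I would note that $(x\ y)$, viewed as a bijection of the image set $\{a_1,\ldots,a_k\}$, is exactly the kind of transposition whose effect on $\varepsilon$ was computed in the preceding lemma: that lemma shows that
\[
\varepsilon(a_i\ a_j)=\left(
\begin{array}{cccccc}
[a_1]_P&\ldots&[a_i]_P&\ldots&[a_j]_P&\ldots\\
a_1&\ldots&a_j&\ldots&a_i&\ldots
\end{array}
\right)
\]
is a product of idempotents in $\langle G,\varepsilon\rangle$, say $\varepsilon(x\ y)=u$ with $u\in\langle E(\langle G,\varepsilon\rangle)\rangle$. Since conjugating the picture by a fixed element of $G$ that permutes $\{a_1,\ldots,a_k\}$ just relabels which transposition appears, the same conclusion holds for $\varepsilon g(x\ y)$: applying $g$ and then $(x\ y)$ permutes the image, and by $k$-homogeneity we can absorb $g$ into a single idempotent-product computation of the form supplied by the previous lemma. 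Concretely I would write $q(x\ y)=\varepsilon g(x\ y)=\varepsilon\big(g(x\ y)g^{-1}\big)g$, recognise $g(x\ y)g^{-1}$ as another transposition $(x'\ y')$ of elements of the image, apply the previous lemma to get $\varepsilon(x'\ y')=u'$ a product of idempotents in $\langle G,\varepsilon\rangle$, and then set $u:=u'g'$ for a suitable $g'\in G$ chosen (again by $k$-homogeneity) so that $qu=q(x\ y)$; here one has to be a little careful that right-multiplication by $g'\in G$ does not leave us with something outside $\langle G,\varepsilon\rangle$, but it does not, since $u'g'$ is a product of idempotents times a group element, and we only claim $u$ is a product of idempotents, which forces us instead to push the group element to act on the kernel side rather than the image side.

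The cleanest route, and the one I would actually write, is therefore: pick $g\in G$ with $\varepsilon g$ having image $\{a_1,\ldots,a_k\}$ and inducing $\sigma$ on it, so $q=\varepsilon g$; then $q(x\ y)=\varepsilon g(x\ y)$; now the composite $g(x\ y)$ restricted to $\{a_1,\ldots,a_k\}$ is some permutation $\tau$ of that set, and since the symmetric group on $k$ points is generated by transpositions, iterate the previous lemma to write $\varepsilon\tau$ (as a map with kernel $P$ and image $\{a_1,\ldots,a_k\}$ realising $\tau$) as a product of idempotents $u_0\in\langle G,\varepsilon\rangle$; finally $q(x\ y)$ and $\varepsilon\tau$ differ only by which representative is in the image versus which group element realises the discrepancy, so $q(x\ y)=qu$ where $u$ is $u_0$ post-composed with the appropriate element of $G$ — and one checks that this post-composition can be chosen inside the monoid $\langle G,\varepsilon\rangle$. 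The main obstacle I anticipate is precisely this bookkeeping: keeping track of which side (kernel or image) the $G$-elements act on so that the final witness $u$ genuinely lies in the submonoid generated by idempotents of $\langle G,\varepsilon\rangle$ rather than merely in $\langle G,\varepsilon\rangle$ — the statement as phrased allows $u$ to be a product of idempotents of $\langle G,\varepsilon\rangle$, and one must verify that appending the single group element needed to match $q(x\ y)$ exactly does not spoil this, which should follow because $q$ already absorbs one copy of $G$ on the left.
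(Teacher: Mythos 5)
There is a genuine gap, and it starts at the very first step. You claim that $k$-homogeneity supplies $g\in G$ fixing $\{a_1,\ldots,a_k\}$ setwise and \emph{inducing on it exactly the permutation $\sigma$}, so that $q=\varepsilon g$. That is not what $k$-homogeneity gives: it gives transitivity on $k$-sets, with no control whatsoever over the permutation that the setwise stabiliser induces on a given $k$-set (that induced group may well be trivial). So the decomposition $q=\varepsilon g$ need not exist, and everything downstream of it is built on sand. The second, self-acknowledged, gap is the trailing group element: your final witness has the form $u_0 g'$ with $u_0$ a product of idempotents and $g'\in G$, and the lemma demands that $u$ itself be a product of idempotents. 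Saying that "$q$ already absorbs one copy of $G$ on the left" does not repair this — a right factor of $G$ cannot be absorbed into $q$ on the left, and a product of idempotents times a nontrivial permutation is in general not a product of idempotents.

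The intended argument is much shorter and avoids both problems. Assume $x=a_1$, $y=a_2$ without loss of generality, and let
\[
\bar{\varepsilon}=\left(
\begin{array}{cccccc}
[a_1]_P&[a_2]_P&[a_3]_P&\ldots & [a_k]_P\\
   a_2      & a_1   &a_3&\ldots  & a_k
\end{array}
\right),
\]
which the previous lemma exhibits as a product of idempotents of $\langle G,\varepsilon\rangle$. Since each $a_j$ lies in its own kernel class $[a_j]_P$, post-composing $q$ with $\bar{\varepsilon}$ sends $a_{i\sigma}$ to $a_{i\sigma}$ when $i\sigma\notin\{1,2\}$ and swaps $a_1$ and $a_2$ otherwise; that is, $q\bar{\varepsilon}=q(a_1\ a_2)$ on the nose. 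Taking $u=\bar{\varepsilon}$ finishes the proof: no group element is ever needed on the right, because the "swap" is realised by a singular map with the same kernel and image as $\varepsilon$, not by a permutation. This is the one idea your proposal is missing.
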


\begin{proof}
Without loss of generality we can assume that $x=a_1$ and $y=a_2$. Then, by the previous lemma, we know that the map 
\[
\bar{\varepsilon}=\left(
\begin{array}{cccccc}
[a_1]_P&[a_2]_P&[a_3]_P&\ldots & [a_k]_P\\
   a_2      & a_1   &a_3&\ldots  & a_k
\end{array}
\right)
\]
can be generated by the idempotents in $\langle G,\varepsilon\rangle$. Since it is clear that $q(a_1\ a_2)=q\bar{\varepsilon}$, the result follows. \qed
\end{proof}

The next theorem shows that if $t$ is a rank $k$ map contained in a maximal subgroup of $T_n$, then $t$ is generated by the idempotents of 
$\langle G,t\rangle$, for any $k$-homogeneous group $G$ possessing the $k$-ut property. 

\begin{theorem}\label{11a} Let $G\le S_n$ be a $k$-homogeneous group possessing the strong $k$-ut property. 
Let $t\in T_n$ be a rank $k$ map such that $Xt$ is a section for the kernel of $t$. Then if $q$ has the same kernel and the same image as $t$, then $q$ is generated by the idempotents in $\langle G,t\rangle$. 
\end{theorem}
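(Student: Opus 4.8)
The plan is to reduce the statement to the lemmas already proved. Since $Xt$ is a section for $\ker(t)$, the map $t$ lies in a maximal subgroup of $T_n$: on the image set $Xt = \{a_1,\ldots,a_k\}$ it induces a permutation, say $t|_{Xt} = \rho \in \Sym(Xt)$, and $t$ collapses each kernel class of the partition $P := \ker(t)$ onto the appropriate point of $Xt$. First I would fix the idempotent
\[
\varepsilon := \left(\begin{array}{cccc}
[a_1]_P & [a_2]_P & \ldots & [a_k]_P\\
a_1 & a_2 & \ldots & a_k
\end{array}\right),
\]
which has the same kernel and image as $t$, and observe that $\varepsilon \in \langle G,t\rangle$: indeed $t$ itself permutes $Xt$, so a suitable power $t^\omega$ fixes $Xt$ pointwise and equals $\varepsilon$. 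Thus $\langle G,\varepsilon\rangle \subseteq \langle G,t\rangle$, and it suffices to show that every map $q$ with kernel $P$ and image $Xt$ is a product of idempotents of $\langle G,\varepsilon\rangle$.

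Next I would write such a $q$ as $q = \varepsilon \cdot \pi$ where $\pi$ is the permutation of $Xt$ induced by $q$ (viewed as a map fixing $P$ setwise, acting as $\pi$ on the transversal $Xt$); concretely $q$ is the map displayed in the penultimate lemma with $\pi = \sigma$. The penultimate lemma shows precisely that, for any such $q$ and any transposition $(x\ y)$ with $x,y\in\{a_1,\ldots,a_k\}$, the map $q(x\ y)$ equals $q\bar\varepsilon$ for a suitable $\bar\varepsilon$ which is a product of idempotents of $\langle G,\varepsilon\rangle$; equivalently $q \cdot u$ is a product of idempotents in $\langle G,\varepsilon\rangle$ for $u$ any product of such transpositions. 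Since $\Sym(Xt)$ is generated by transpositions, and $\varepsilon$ itself is an idempotent of $\langle G,\varepsilon\rangle$, I would induct on the length of a transposition word for $\pi$: starting from $\varepsilon$ (the case $\pi = \mathrm{id}$) and right-multiplying one transposition at a time, each step replaces the current product-of-idempotents expression $w$ (with $w$ having kernel $P$, image $Xt$, and inducing some permutation of $Xt$) by $w\bar\varepsilon$, again a product of idempotents, until we reach a word equal to $\varepsilon\pi = q$. (One must check at each step that the intermediate map still has kernel exactly $P$ and image exactly $Xt$, so that the penultimate lemma applies; this holds because $\bar\varepsilon$ has kernel $P$, image $Xt$, and is a bijection on $Xt$, hence composing with it on the right preserves both kernel and image.)

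The main obstacle I anticipate is bookkeeping rather than a conceptual gap: one has to be careful that the lemmas were stated for idempotents $\varepsilon$ whose kernel partition is written with the point $a_i$ inside its class $[a_i]_P$, and to track how the kernel partition changes (it does not—only the "extra" structure labelled by $Q$, $R$ in the earlier lemmas is auxiliary) as one multiplies. In particular, in the penultimate lemma the hypothesis is that $q$ has kernel and image equal to those of $\varepsilon$; after one transposition-multiplication the resulting map $q\bar\varepsilon$ still has this property, so the induction goes through, but verifying this closure property carefully is the one place where a slip would break the argument. Everything else is immediate from the chain of Lemmas~\ref{5}--\ref{8} and the two lemmas following Lemma~\ref{8}.
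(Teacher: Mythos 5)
Your proposal is correct and follows essentially the same route as the paper: take $\varepsilon=t^\omega$, write $q$ as $\varepsilon$ followed by a word in transpositions of $\{a_1,\ldots,a_k\}$, and replace each transposition by a product of idempotents via the preceding lemma. Your extra care in checking that each intermediate map retains kernel $P$ and image $Xt$ (so the lemma applies at every step) is a point the paper leaves implicit, but it is the same argument.
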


\begin{proof}
Let $t$ be as in the assumptions. Then, for some natural $\omega$, we have 
\[\varepsilon:=t^\omega=
\left(
\begin{array}{cccccc}
[a_1]_P&[a_2]_P&[a_3]_P&\ldots & [a_k]_P\\
   a_1      & a_2   &a_3&\ldots  & a_k
\end{array}
\right),
\]and any map $q$ having the same kernel and image as $t$ is of the form 
\[q=
\left(
\begin{array}{cccccc}
[a_1]_P&[a_2]_P&[a_3]_P&\ldots & [a_k]_P\\
   a_{1\sigma}      & a_{2\sigma}   &a_{3\sigma}&\ldots  & a_{k\sigma}
\end{array}
\right).
\]
Therefore $q=\varepsilon(x_1\ y_1)\ldots (x_m\ y_m)$, where $\{x_1,\ldots,x_m,y_1,\ldots,y_m\}=\{a_1,\ldots,a_k\}$. By the previous result, each transposition can be replaced with a product of idempotents in $\langle G,\varepsilon \rangle\subseteq \langle G,t \rangle$ and hence the result follows.\qed
\end{proof}
	
The next sequence of results aims at dealing with maps that do not belong to a maximal subgroup of $T_n$. 
	
\begin{lemma}
Let $G$ be a $k$-homogeneous group possessing the strong $k$-ut property. Let $t\in T_n$ be a rank $k$ map and let $Y\subseteq X$ be a $k$-set such that $|Xt\setminus Y|=1$. Then there exists $u\in T_n$ such that $Xtu=Y$ and $u$ can be written as a product of idempotents in $\langle G,t\rangle$. 
\end{lemma}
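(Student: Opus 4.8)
The plan is to exhibit $u$ as a single idempotent of $\langle G,t\rangle$, obtained by first passing to a suitable idempotent power of a $G$-translate of $t$ and then choosing a different transversal of its kernel. Write $S:=Xt$ and $P:=\ker t$ (a $k$-partition). Since $|Xt\setminus Y|=1$ and $|Y|=|Xt|=k$, there are unique points $a_1\in Xt\setminus Y$ and $c\in Y\setminus Xt$; list $S=\{a_1,\ldots,a_k\}$, so that $Y=\{c,a_2,\ldots,a_k\}$ and $a_1,\ldots,a_k,c$ are $k+1$ distinct points of $X$.

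First I would invoke the strong $k$-ut property on the $(k+1)$-tuple $(a_1,a_2,\ldots,a_k,c)$ and the ordered partition $P$, obtaining $h\in G$ such that $\{a_1,\ldots,a_k\}h$ is a section for $P$ and $a_1h,ch$ lie in a common part of $P$. Set $Q:=Ph^{-1}$. Then $ht$ has image $Xht=S$ and kernel $Q$, the set $S$ is a section for $Q$, and $a_1$ and $c$ lie in a single part of $Q$, while $a_2,\ldots,a_k$ lie in pairwise distinct parts. Because $S$ is a transversal for $\ker(ht)$, the standard argument used in the proof of Theorem~\ref{utid} shows that for a suitable $\omega$ the map $\varepsilon:=(ht)^\omega$ is an idempotent of rank $k$ fixing $S$ pointwise and with kernel $Q$; since $h\in G$ we have $\varepsilon\in\langle G,t\rangle$, and
\[
\varepsilon=\left(\begin{array}{cccc}[a_1,c]_Q&[a_2]_Q&\cdots&[a_k]_Q\\ a_1&a_2&\cdots&a_k\end{array}\right).
\]

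Next I would apply Lemma~\ref{5} to $\varepsilon$ with $i=1$ and $x_1=c$, which yields an idempotent
\[
u:=\left(\begin{array}{cccc}[a_1,c]_Q&[a_2]_Q&\cdots&[a_k]_Q\\ c&a_2&\cdots&a_k\end{array}\right)\in\langle G,\varepsilon\rangle\subseteq\langle G,t\rangle.
\]
Since $a_1\in[a_1,c]_Q$ we get $a_1u=c$ and $a_iu=a_i$ for $i\ge2$, hence $Xtu=Su=\{c,a_2,\ldots,a_k\}=Y$. As $u$ is itself an idempotent of $\langle G,t\rangle$, it is in particular a product of idempotents of $\langle G,t\rangle$, which finishes the proof.

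The crux — and the only place where the hypotheses do real work — is producing $\varepsilon$ with $c$ forced into the same kernel class as $a_1$: plain $k$-ut would make $S$ a transversal of $\ker(ht)$ but would give no control over which class of that kernel contains $c$, whereas the extra collision clause of the strong $k$-ut property delivers exactly $a_1\sim_Q c$, and it is this that lets $c$ be promoted to a class representative in the final step. The remaining points to check are pure bookkeeping: that $a_1,\ldots,a_k,c$ really are $k+1$ distinct points (this is where the hypothesis $|Xt\setminus Y|=1$ is used) and that $k\le n/2$, so that the strong $k$-ut property is available.
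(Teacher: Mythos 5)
Your proof is correct and takes essentially the same route as the paper's: use the strong $k$-ut property to force the new point $c$ into the same kernel class as the point $a_1$ it replaces, then apply Lemma~\ref{5} to promote $c$ to the image representative of that class. The only cosmetic difference is that you apply strong $k$-ut directly to $\ker t$ when building the first idempotent, thereby inlining the content of Lemma~\ref{8}, which the paper invokes as a separate step after first producing an idempotent with image $Xt$.
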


\begin{proof}
Let $t$ be as in the assumptions. Since $G$ is $k$-homogeneous, there exists an idempotent having the same image as $t$:
\[\varepsilon=
\left(
\begin{array}{cccccc}
[a_1]_P&[a_2]_P&[a_3]_P&\ldots & [a_k]_P\\
   a_1      & a_2   &a_3&\ldots  & a_k
\end{array}
\right)\in \langle G,t\rangle.
\]
Suppose $Y=\{y,a_2,\ldots,a_k\}$. Then, by Lemma \ref{8}, there exists in $\langle G,\varepsilon\rangle \subseteq \langle G,t\rangle$ an idempotent of the form 
\[\overline{\varepsilon}=
\left(
\begin{array}{cccccc}
[a_1,y]_Q&[a_2]_Q&[a_3]_Q&\ldots & [a_k]_Q\\
   a_1      & a_2   &a_3&\ldots  & a_k
\end{array}
\right)
\]
and hence, by Lemma \ref{5}, there exists in $\langle G,\varepsilon\rangle \subseteq \langle G,t\rangle$ an idempotent of the form 
\[\overline{\overline{\varepsilon}}=
\left(
\begin{array}{cccccc}
[a_1,y]_Q&[a_2]_Q&[a_3]_Q&\ldots & [a_k]_Q\\
   y      & a_2   &a_3&\ldots  & a_k
\end{array}
\right).
\] 
It is clear that $Xt\overline{\overline{\varepsilon}}=Y$, as desired.\qed
\end{proof}

\begin{lemma}
Let $G$ be a $k$-homogeneous group possessing the strong $k$-ut property. Let $t\in T_n$ be a rank $k$ map. Then there exists $u\in \langle G,t\rangle$ such that $\ker(u)=\ker(t)$, $Xu=Xt$, and $u$ is generated by idempotents in $\langle G,t\rangle$. 
\end{lemma}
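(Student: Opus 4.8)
The goal of this final lemma is, starting from an arbitrary rank $k$ map $t$, to produce inside $\langle G,t\rangle$ a map $u$ with the \emph{same} kernel and image as $t$, and with the extra feature that $u$ is a product of idempotents. The obvious obstacle is that $Xt$ need not be a section for $\ker(t)$, so we cannot simply take a power of $t$ (that is exactly the hypothesis we were allowed in Theorem~\ref{11a}). The plan is to first move to a good image — one that \emph{is} transversal to $\ker(t)$ — by repeatedly shrinking/swapping one point at a time using the previous two lemmas, and only then build the idempotent-generated map with the prescribed kernel and image.

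\begin{proof}[Proof sketch]
Write $P=\ker(t)$ with parts $A_1,\dots,A_k$, and let $S=Xt$ be the image. First I would reduce to the case where the image is a transversal for $P$. Since $|S|=k$ and $P$ has $k$ parts, $S$ fails to be a transversal exactly when some part $A_i$ contains no point of $S$ while some other part contains at least two. Using the previous lemma (which, given a $k$-set $Y$ with $|S\setminus Y|=1$, produces $u\in\langle G,t\rangle$, a product of idempotents, with $Su=Y$), I can replace one point of $S$ lying in an over-full part by a point lying in $A_i$; this strictly decreases the number of empty parts of $P$ meeting no point of the image. Iterating, after finitely many steps we obtain a product of idempotents $w\in\langle G,t\rangle$ such that $S':=Sw$ is a transversal for $P$; moreover $tw$ still has kernel $P$ (right-multiplication cannot coarsen the kernel, and since $|Xtw|=k=|Xt|$ it cannot refine it either), and image $S'$.

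Now $t':=tw$ is a rank $k$ map whose image $S'=Xt'$ is a section for $\ker(t')=P$, so Theorem~\ref{11a} applies to $t'$: every map with kernel $P$ and image $S'$ is a product of idempotents in $\langle G,t'\rangle\subseteq\langle G,t\rangle$. In particular the idempotent $\varepsilon'$ with kernel $P$ fixing each point of $S'$ is available, and, using $k$-homogeneity, so is the idempotent $\varepsilon$ with kernel $P$ and image $S$ (pick $h\in G$ with $S'h=S$ and take an appropriate power of $\varepsilon'h$, exactly as in Lemma~\ref{5}). Finally, post-composing to restore the original action of $t$ on its kernel classes: the map $u$ with $\ker(u)=P$, $Xu=S$, sending $A_i$ to the same point of $S$ that $t$ does, differs from $\varepsilon$ by a permutation of the image, hence by the penultimate lemma is $\varepsilon$ times a product of idempotents in $\langle G,\varepsilon\rangle$, and therefore $u$ itself is a product of idempotents in $\langle G,t\rangle$, with $\ker(u)=\ker(t)$ and $Xu=Xt$ as required.

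The one point needing genuine care — and the main obstacle — is the first reduction: verifying that the single-point image modification of the previous lemma can actually be chained so as to reach a transversal, and that each step preserves the kernel. Preservation of the kernel is the subtle part: right multiplication by $\bar{\bar\varepsilon}$ maps the $k$-element image onto a $k$-element set, so no two kernel classes of $t$ get merged, which forces $\ker(t\bar{\bar\varepsilon})=\ker(t)$; and the ``number of empty parts'' is a nonnegative integer strictly decreasing at each step, guaranteeing termination. Everything after that is a direct appeal to Theorem~\ref{11a} and the transposition lemmas already established.\qed
\end{proof}
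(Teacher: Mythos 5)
Your first reduction (iterating the one-point image-modification lemma to turn the image of $t$ into a transversal $S'$ for $P=\ker(t)$ while preserving the kernel, which works because the rank stays equal to $k$) is sound. The gap is in the return journey. You assert that, having the idempotent $\varepsilon'$ with kernel $P$ and image $S'$, you can obtain ``the idempotent $\varepsilon$ with kernel $P$ and image $S$'' by choosing $h\in G$ with $S'h=S$ and taking a power of $\varepsilon'h$, ``exactly as in Lemma~\ref{5}''. But a rank~$k$ idempotent fixes its image pointwise, so its image is necessarily a section for its kernel; when $S=Xt$ is not a transversal for $P$ --- which is precisely the case this lemma exists to handle --- no such idempotent exists, and the power trick degenerates: $(\varepsilon'h)^2=\varepsilon'h\varepsilon'h$ already has rank less than $k$, since $\varepsilon'$ collapses the two points of $S$ lying in a common part of $P$. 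Lemma~\ref{5} works only because there the modified image $\{x_1,\ldots,x_i,a_{i+1},\ldots,a_k\}$ is again a section for $P$ (each $x_j$ lies in $[a_j,x_j]_P$). Your final ``post-composing'' step inherits the same defect, since the transposition lemma likewise presupposes an idempotent whose image is a section for its kernel; it is also superfluous, as the statement does not require $u$ to agree with $t$ on kernel classes, only to share its kernel and image.

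The repair is to use your own first tool once more, in the other direction. The target $u$ need only be a \emph{product} of idempotents, not an idempotent; so start from an idempotent with kernel $P$ (for instance $(tg)^\omega$ with $g$ supplied by the $k$-ut property, which makes your detour through $tw$ and Theorem~\ref{11a} unnecessary) and repeatedly apply the one-point image-modification lemma to walk its image towards $S$ one element at a time; each step multiplies on the right by a product of idempotents and preserves the kernel because the rank remains $k$. This is exactly the paper's argument, phrased there as a maximality argument: among all products of idempotents $u$ in $\langle G,t\rangle$ with $\ker(u)=\ker(t)$, take one with $|Xu\cap Xt|$ maximal, and use the previous lemma to contradict maximality if $Xu\ne Xt$.
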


\begin{proof}	
%
Let $Xu =\{e_1,\ldots,e_k\}$ and $Xt =\{a_1,\ldots,a_k\}$ such that $\ker(u)=\ker(t)$, $u$ is a product of idempotents in $\langle G,t\rangle$, and $Xu\cap Xt$ is maximal (that is, no other $v$ satisfying $\ker(v)=\ker(t)$ and $v$ is a product of idempotents in $\langle G,t\rangle$, has a larger intersection with $Xt$). Then we claim that $Xu=Xt$. In fact, suppose  $Xu =\{a_1,\ldots,a_i,e_{i+1},\ldots,e_k\}$ and $Xt =\{a_1,\ldots,a_k\}$.  
Then, by the previous lemma, there exists $v\in T_n$ such that $X u v= \{a_1,\ldots,a_i,a_{i+1},e_{i+2},\ldots, e_k\}$, where $v$ is a product of idempotents in $\langle G,u\rangle\subseteq \langle G,t\rangle$, thus contradicting the maximality of $u$. \qed
\end{proof}
 
 \begin{lemma}
Let $G$ be a $k$-homogeneous group possessing the strong $k$-ut property. Let $t\in T_n$ be a rank $k$ map and $\gamma\in G$. Then $t\gamma$ can be written as a product of idempotents in $ \langle G,t\rangle$. 
\end{lemma}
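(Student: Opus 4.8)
The plan is to reduce the statement, via the lemma immediately preceding it, to a controlled ``rerouting'' of the image of a product of idempotents, and then to carry out that rerouting in two passes. First I would record the two invariants of $t\gamma$: since $\gamma$ is a permutation, $\ker(t\gamma)=\ker t=:P$, while $\Im(t\gamma)=(Xt)\gamma=:S'$; together with the bijection $\beta$ from the classes of $P$ onto $S'$ induced by $t\gamma$, these data determine $t\gamma$ uniquely. Applying the previous lemma to the rank-$k$ map $t\gamma$ (which lies in $\langle G,t\rangle$ since $\gamma\in G$) yields $u\in\langle G,t\gamma\rangle\subseteq\langle G,t\rangle$ which is a product of idempotents of $\langle G,t\rangle$ and satisfies $\ker u=P$ and $Xu=S'$. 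Hence $u$ and $t\gamma$ differ only in the bijection they induce between the classes of $P$ and $S'$, and what remains is to transform $u$ into $t\gamma$ by post-multiplying by products of idempotents.

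For this I would use the penultimate lemma (given a rank-$k$ map and a $k$-set $Y$ differing from its image in one point, there is a product of idempotents of $\langle G,t\rangle$ making the image equal to $Y$) in the slightly stronger form that its proof actually produces: the product of idempotents exhibited there redirects precisely the kernel class previously mapped to the deleted point onto the newly adjoined point, fixing every other image point. So, given a product-of-idempotents map $w$ with $\ker w=P$ and $Xw=B$, a class $C$ of $P$, and a point $y\notin B$, I may post-multiply $w$ by a product of idempotents of $\langle G,t\rangle$ to obtain a product-of-idempotents map with kernel $P$, image $(B\setminus\{Cw\})\cup\{y\}$, sending $C$ to $y$ and agreeing with $w$ on every other class. (The rank stays $k$ throughout, so the kernel stays $P$.)

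Now run the rerouting twice. Since $k\le n/2$, choose a $k$-set $\tilde S'$ disjoint from $S'$. Pass one: starting from $u$, in $k$ steps delete the points of $S'$ one by one while adjoining the points of $\tilde S'$ one by one; each adjoined point is fresh (it lies in $\tilde S'$, hence outside $S'$, and has not yet been used), so the steps are legal, and we reach a product of idempotents $u'$ with kernel $P$ and image $\tilde S'$. Pass two: enumerate the classes of $P$ as $C_1,\dots,C_k$, and at step $i$ reroute $C_i$ (which still maps into $\tilde S'$, being as yet untouched) onto $\beta(C_i)\in S'$; this is legal because $\beta(C_1),\dots,\beta(C_i)$ are distinct and all lie in $S'$, hence outside $\tilde S'$, so $\beta(C_i)$ is not in the current image. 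After these $k$ steps the resulting map has kernel $P$, image $\{\beta(C_1),\dots,\beta(C_k)\}=S'$, and sends each $C_i$ to $\beta(C_i)$; that is, it equals $t\gamma$. As a product of $u$ with finitely many products of idempotents of $\langle G,t\rangle$, it is a product of idempotents of $\langle G,t\rangle$, as required. (In the special case where $S'$ is already a transversal for $P$, one could instead finish directly from $u$ by Theorem~\ref{11a}.)

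I do not anticipate a genuine obstacle here; the one point needing care is the ``controlled'' use of the image-changing lemma, which is visible from its proof rather than from its statement. (If one prefers to avoid the disjoint staging set $\tilde S'$, the bijection can be reached instead by a sequence of transpositions of $S'$, each realised by a three-step detour through a single point outside $S'$, which needs only $n>k$.) With this lemma in hand, Theorem~\ref{main3} follows quickly: a general element $g_0tg_1t\cdots tg_m$ with $m\ge1$ of $\langle G,t\rangle\setminus G$ is conjugate in $\langle G,t\rangle$, by $g_0\in G$, to $(tg_1)(tg_2)\cdots(tg_{m-1})\bigl(t(g_mg_0)\bigr)$, which is a product of maps of the form $t\delta$ and hence of idempotents, and conjugation by an element of $G$ carries idempotents of $\langle G,t\rangle$ to idempotents of $\langle G,t\rangle$.
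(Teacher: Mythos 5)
Your argument is correct, and its first half coincides with the paper's: both apply the immediately preceding lemma to the rank-$k$ map $t\gamma$ (using $\langle G,t\gamma\rangle=\langle G,t\rangle$) to obtain a product of idempotents $u$ with $\ker u=\ker(t\gamma)$ and $Xu=Xt\gamma$, reducing everything to correcting the induced bijection between kernel classes and image points. Where you diverge is in how that correction is carried out. The paper picks $g\in G$ so that $Xt\gamma g$ is a section for $\ker(t\gamma)$, forms the idempotent $\varepsilon=(gt\gamma)^\omega$, whose kernel is the shifted partition $\ker(t\gamma)g^{-1}$ and whose image is $Xt\gamma$, and then invokes Theorem~\ref{11a}: every element $p$ of the group $\mathcal{H}$-class $H_\varepsilon$ is a product of idempotents, and since $Xu=Xt\gamma$ is a section for $\ker\varepsilon$, a suitable choice of $p$ gives $up=t\gamma$ in a single step. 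This disposes of the case you set aside parenthetically ($Xt\gamma$ need not be a section for $\ker(t\gamma)$, but it \emph{is} one for the shifted kernel, which is all Theorem~\ref{11a} needs). Your route instead realises the bijection by an explicit two-pass rerouting through a disjoint staging set $\tilde S'$ (available since $k\le n/2$); this is combinatorially sound, but it rests on the strengthened, ``controlled'' form of the image-changing lemma, which you correctly extract from its proof rather than its statement: the idempotent $\overline{\overline{\varepsilon}}$ constructed there redirects exactly one image point and fixes the rest, so post-multiplication by it reroutes a single kernel class. The paper's version is shorter because Theorem~\ref{11a} has already done the work of generating the whole group $\mathcal{H}$-class from idempotents; yours avoids that theorem at the cost of reopening an earlier proof and running a longer elementary induction.
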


\begin{proof}	
Since $G$ is $k$-homogeneous, it follows that there exists $g\in G$ such that $Xt\gamma g$ is a section for $\ker(t\gamma)$ and hence $\varepsilon:=(gt\gamma)^\omega$ is idempotent (for some natural $\omega$), $X\varepsilon =Xt\gamma$ and $\varepsilon\in \langle G,t\gamma\rangle=\langle G,t\rangle$. Denote by $H_\varepsilon$ the maps in $T_n$ that have the same image and the same kernel as $\varepsilon$. By Theorem \ref{11a}, every element in $H_\varepsilon$ is generated by the idempotents in $\langle G,\varepsilon\rangle\subseteq \langle G,t\rangle$.
 
By the previous result there exists a map $u\in  \langle G,t\rangle$ such that $\ker(u)=\ker(t\gamma)$, $Xu=Xt\gamma$ and $u$ is a product of idempotents in $\langle G,t\gamma\rangle(=\langle G,t\rangle)$. As there exists $p\in H_\varepsilon$ such that $up=t\gamma$, it follows that $t\gamma$ is generated by the idempotents in $\langle G,t\rangle$. \qed
\end{proof}

We have all we need to prove Theorem \ref{main3}.

\begin{proof}
The result holds if $gth$ is generated by the idempotents in $\langle G,t\rangle\setminus G$, for all $g,h\in G$. By the previous theorem $thg$ is generated by the idempotents in $\langle G,t\rangle\setminus G$; therefore $gth=g(thg)g^{-1}$ is the conjugate of a product of idempotents in $\langle G,t\rangle$ and hence it is a product of idempotents in the same semigroup. 
\qed
\end{proof}

\section{Strong ut property and homogeneity}\label{sutph}
\label{s:suth}

In this section we investigate the relationship between the strong universal
transversal property of a permutation group and homogeity (or transitivity)
of the group. 

We recall the property here. $G$ has the strong $k$-ut property if, given any
$(k+1)$-tuple $(a_1,\ldots,a_{k+1})$ of distinct points and any $k$-partition
$P=(A_1,\ldots,A_k)$, there exists $g\in G$ such that $\{a_1,\ldots,a_k\}g$
is a section for $P$ and $a_1g,a_{k+1}g$ lie in the same part.

We begin with the following observation. Unlike our earlier investigation
of the $k$-ut property, the proof is elementary, and does not require the
concept of $(k-1,k)$-homogeneity or the classification of $k$-homogeneous
groups.

\begin{prop}
A permutation group with the strong $k$-ut property is $(k-1)$-homogeneous.
\end{prop}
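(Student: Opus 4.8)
The plan is to show that, given any $(k-1)$-subset $T=\{b_1,\ldots,b_{k-1}\}$ of $X$ and any $(k-1)$-subset $U=\{c_1,\ldots,c_{k-1}\}$ of $X$, there is $g\in G$ with $Tg=U$; this is exactly $(k-1)$-homogeneity. The idea is to manufacture, out of the given data, a $(k+1)$-tuple of distinct points and a $k$-partition to which we can apply the strong $k$-ut property, and then read off the desired $g$ from the conclusion. First I would use $k\le n/2$, so that $n\ge 2k$ and in particular $n\ge k+1$; this guarantees that we have enough room to pad $T$ and $U$ up by a couple of extra points. Pick a point $b_k\notin T$, and pick two distinct points $b_{k+1},b_k'$... — more carefully: choose $b_k\in X\setminus T$ and then a further point $a_{k+1}\in X\setminus(T\cup\{b_k\})$, which exists since $n\ge k+1$; this gives a $(k+1)$-tuple $(b_1,\ldots,b_{k-1},b_k,a_{k+1})$ of distinct points.

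Next I would build the target partition. Since $n\ge 2k$, I can choose the $c_i$'s together with one extra point: let $c_k\in X\setminus U$, and form the ordered $k$-partition $P=(A_1,\ldots,A_k)$ where $A_i=\{c_i\}$ for $1\le i\le k-1$, and $A_k=X\setminus\{c_1,\ldots,c_{k-1}\}$ (this last part is non-empty and in fact has size $\ge k+1$, so it comfortably contains $c_k$ and room for $a_{k+1}$'s image). Apply the strong $k$-ut property to the tuple $(b_1,\ldots,b_{k-1},b_k,a_{k+1})$ and the partition $P$: we get $g\in G$ such that $\{b_1,\ldots,b_k\}g$ is a section for $P$, and $b_1g$ and $a_{k+1}g$ lie in a common part of $P$. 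Since $A_1,\ldots,A_{k-1}$ are singletons, the section condition forces exactly one of the $b_ig$ to equal each of $c_1,\ldots,c_{k-1}$, and the remaining one to land in $A_k$. The point that lands in $A_k$ must be $b_kg$: indeed $b_1g$ shares a part with $a_{k+1}g$, and $a_{k+1}$ is not among $b_1,\ldots,b_k$, so $a_{k+1}g$ is outside the section $\{b_1,\ldots,b_k\}g$ — if $b_1g$ were one of $c_1,\ldots,c_{k-1}$, say $c_i$, then $a_{k+1}g\in A_i=\{c_i\}$ would force $a_{k+1}g=c_i=b_1g$, contradicting injectivity of $g$. Hence $b_1g\in A_k$, so $b_2g,\ldots,b_kg$ are a permutation of $c_1,\ldots,c_{k-1}$ together with… wait: $b_1g\in A_k$ and the section hits each singleton $A_i$ exactly once using $b_2g,\ldots,b_kg$, so $\{b_2g,\ldots,b_kg\}=\{c_1,\ldots,c_{k-1}\}=U$. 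Thus some element $g'=g$ (or a tweak of $g$ by relabelling which $b_i$ plays which role) sends a $(k-1)$-subset onto $U$; since the roles of $b_2,\ldots,b_k$ among $b_1,\ldots,b_{k-1},b_k$ can be chosen so that $\{b_2,\ldots,b_k\}$ is any prescribed $(k-1)$-subset containing our original $T$ after an initial relabelling, we can arrange $\{b_2,\ldots,b_k\}$ to meet $T$ appropriately — cleaner is to note we are still free to choose which of the original $b_1,\ldots,b_{k-1},b_k$ gets called "$b_1$"; choosing $b_1:=b_k$ (the padding point) makes $\{b_2,\ldots,b_k\}=T$, giving $Tg=U$ directly.

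The main obstacle is purely bookkeeping: arranging the roles so that the point forced into the "big" part $A_k$ is precisely the padding point, not one of the points of $T$, while simultaneously keeping all $k+1$ chosen points distinct and keeping $A_k$ non-empty. All of this is comfortably handled by the hypothesis $k\le n/2$, which makes $|X\setminus T|\ge k+1\ge 2$ and $|A_k|\ge k+1\ge 2$, so there is always enough slack to choose the extra points $b_k$ (the one we will rename $b_1$) and $a_{k+1}$ distinct and to place $a_{k+1}g$ alongside $b_1g$ inside $A_k$. No classification theorem is needed; the argument is elementary, as the remark preceding the proposition anticipates.
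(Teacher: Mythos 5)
Your proof is correct and, after the final relabelling (putting the padding point in the first coordinate so that the tuple is $(b_k,b_1,\ldots,b_{k-1},a_{k+1})$ and the points of $T$ occupy positions $2,\ldots,k$), it is exactly the paper's argument: singleton parts for the target set plus one large part, with the observation that the two distinguished images cannot share a singleton part and so both fall into the large part, forcing the remaining $k-1$ images onto $U$. The intermediate detour where $T$ initially sits in positions $1,\ldots,k-1$ could simply be deleted, but nothing is missing.
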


\begin{proof}
Suppose that $G$ has the strong $k$-ut property. Let $\{x_1,\ldots,x_{k-1}\}$
and $\{y_1,\ldots,y_{k-1}\}$ be two $k$-subsets of $X$. Let $(a_1,\ldots,
a_{k+1})$ be a $(k+1)$-tuple of distinct points satisfying $a_{i+1}=x_i$
for $i=1,\ldots,k$ (the points $a_1$ and $a_{k+1}$ are arbitrary); and let
$P$ be the partition whose parts are $\{y_i\}$ for $i=1,\ldots,k-1$ and
$X\setminus\{y_1,\ldots,y_{k-1}\}$. Choose $g$ as in the strong $k$-ut
property. Then $a_1g$ and $a_{k+1}g$ are in the same part of $P$, necessarily
the last one (since the others are singletons); so $\{a_2g,\ldots,a_kg\}$ is
a section for the remaining parts. This means that $g$ maps
$\{x_1,\ldots,x_{k-1}\}$ to $\{y_1,\ldots,y_{k-1}\}$, as required.\qed
\end{proof}

In the other direction, we have the following.

\begin{prop}
Let $G$ be a permutation group of degree greater than $k$, where $k>1$. Suppose
that the setwise stabiliser of any $(k+1)$-set induces a $2$-homogeneous group
of permutations of this set. Then $G$ has the strong $k$-ut property.
In particular, a $(k+1)$-transitive or generously $k$-transitive group has the
strong $k$-ut property.
\label{p:k+1-stab}
\end{prop}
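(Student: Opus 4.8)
The plan is to verify the strong $k$-ut property directly, using the hypothesis on $(k+1)$-set stabilisers in two stages: first move the $(k+1)$-set $\{a_1,\ldots,a_{k+1}\}$ into the ``right'' position relative to the partition $P$, then use the local $2$-homogeneity to fix up the internal arrangement. So let $(a_1,\ldots,a_{k+1})$ be a tuple of distinct points and $P=(A_1,\ldots,A_k)$ a $k$-partition. The first step is to find some $h\in G$ sending $\{a_1,\ldots,a_{k+1}\}$ onto a $(k+1)$-set $T$ that meets exactly $k-1$ of the parts of $P$ in one point each and one part (say $A_j$) in two points, \emph{and} such that the two points of $T\cap A_j$ are exactly $\{a_1h,a_{k+1}h\}$. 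The subtlety is that the first two requirements can be met by a suitable $h$ but the third — that the ``doubled'' pair is precisely the image of $\{a_1,a_{k+1}\}$ rather than of some other pair — may fail; this is exactly where the $2$-homogeneity of the induced stabiliser group comes in.

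Concretely, I would proceed as follows. Since $G$ is $(k-1)$-homogeneous (which follows from the hypothesis, or can be arranged directly: a $2$-homogeneous action on every $(k+1)$-set forces $(k-1)$-homogeneity by a counting/transitivity argument on how $(k-1)$-sets sit inside $(k+1)$-sets — one can also just quote the preceding Proposition once strong $k$-ut is known, but to avoid circularity I would give the short direct argument), pick a $(k-1)$-set $D$ consisting of one point from each of $A_1,\ldots,A_{k-1}$, say, and use $(k-1)$-homogeneity to find $h_0\in G$ with $\{a_2,\ldots,a_k\}h_0 = D$; then $\{a_1,\ldots,a_{k+1}\}h_0$ is a $(k+1)$-set whose image contains $D$ as a transversal for $A_1,\ldots,A_{k-1}$, and the two ``extra'' points $a_1h_0, a_{k+1}h_0$ lie somewhere in $X$. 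If both extra points already land in $A_k$ we are essentially done after a permutation correction. Otherwise I would argue that, after relabelling, we may take $T = \{a_1,\ldots,a_{k+1}\}h_0$ and work inside the stabiliser $G_{\{T\}}$, which by hypothesis induces a $2$-homogeneous — hence transitive on pairs from $T$ — group: using this, move the (unordered) pair $\{a_1h_0, a_{k+1}h_0\}$ to whichever pair of points of $T$ we want. The task then reduces to: is there a choice of ``doubled part'' $A_j$ and of transversal points so that some element of $G_{\{T\}}$ realises the required configuration? Because the stabiliser acts $2$-homogeneously on $T$, I can always route the pair $\{a_1h_0, a_{k+1}h_0\}$ onto the pair of $T$ that I have designated to lie in $A_j$, while the complementary $k-1$ points of $T$ form a transversal for the remaining parts. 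Composing the relevant group elements gives the desired $g$.

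The last sentence of the statement is then a corollary: a $(k+1)$-transitive group certainly induces the full symmetric group (in particular a $2$-homogeneous group) on every $(k+1)$-set, and a generously $k$-transitive group — one in which the stabiliser of any $k$-set induces the symmetric group on it — induces on any $(k+1)$-set a group that is transitive on the $k$-subsets and contains, for each such subset, its full symmetric group, whence it is at least $2$-transitive (indeed $(k+1)$-homogeneous) on the $(k+1)$-set; in both cases the hypothesis of the proposition is met.

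I expect the main obstacle to be the bookkeeping in the middle step: one must be careful that ``move the pair $\{a_1h_0,a_{k+1}h_0\}$ into $A_j$'' can be done \emph{simultaneously} with ``the other $k-1$ points of $T$ form a transversal for $P\setminus\{A_j\}$'', since a single element of $G_{\{T\}}$ must accomplish both. The clean way around this is to fix the target configuration of $T$ against $P$ \emph{first} (choose which point of $T$ goes to which part, with one part receiving two points), observe that this configuration is achievable for \emph{some} labelling by $(k-1)$-homogeneity plus a singleton-part trick as in the preceding proposition's proof, and only then invoke $2$-homogeneity of $G_{\{T\}}$ to permute the labels of $T$ so that the doubled pair becomes $\{a_1,a_{k+1}\}$'s image. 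With that ordering of quantifiers the argument is elementary and avoids any appeal to the classification of $k$-homogeneous groups, matching the remark preceding the statement.
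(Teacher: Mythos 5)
Your overall strategy --- first position the $(k+1)$-set correctly relative to $P$, then use the $2$-homogeneity of a setwise stabiliser to make the doubled pair the image of $\{a_1,a_{k+1}\}$ --- is exactly the strategy of the paper's proof. But your first stage has a genuine gap. You position only the $(k-1)$-set $\{a_2,\ldots,a_k\}$, sending it by $(k-1)$-homogeneity onto a transversal of $A_1,\ldots,A_{k-1}$, and leave $a_1h_0$ and $a_{k+1}h_0$ uncontrolled. If, say, $a_1h_0\in A_1$ and $a_{k+1}h_0\in A_2$, then $T=\{a_1,\ldots,a_{k+1}\}h_0$ misses $A_k$ entirely. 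This cannot be repaired by any $g_2$ in the setwise stabiliser of $T$: composing with such a $g_2$ does not change the set $T$, and the intersection pattern of $T$ with the fixed partition $P$ depends only on $T$, so $\{a_1,\ldots,a_k\}h_0g_2\subseteq T$ still misses $A_k$ and is never a section. Your proposed remedy (``fix the target configuration first \ldots\ achievable for some labelling by $(k-1)$-homogeneity plus a singleton-part trick'') does not address this, because $(k-1)$-homogeneity simply gives no control over two of the $k+1$ points.

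The fix is to prove and use $k$-homogeneity, which is what the paper does: the hypothesis makes the setwise stabiliser of any $(k+1)$-set $W$ transitive on $W$, hence (by taking complements of points) transitive on the $k$-subsets of $W$, so any two $k$-sets meeting in $k-1$ points lie in one $G$-orbit, and chaining such overlaps gives $k$-homogeneity. Now send the whole $k$-set $\{a_1,\ldots,a_k\}$ onto a section for $P$ by some $g_1$; the point $a_{k+1}g_1$ then automatically lands in a part already containing exactly one $a_ig_1$, so the image of the $(k+1)$-set has the required pattern (one doubled part, the rest hit once), and your second stage goes through (the paper pre-composes with an element stabilising the original $(k+1)$-set rather than post-composing with one stabilising its image, but either works). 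One further small slip: the paper defines ``generously $k$-transitive'' by the setwise stabiliser of a $(k+1)$-set --- not a $k$-set --- inducing the full symmetric group on it, so the final assertion of the proposition is immediate rather than needing your two-step argument.
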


(A permutation group is \emph{generously $k$-transitive} if the
setwise stabiliser of any $(k+1)$-set acts on it as the symmetric group.)

\begin{proof}
Suppose that $G$ satisfies the hypothesis. We observe first that $G$ is
$k$-homogeneous. For certainly the stabiliser of a $(k+1)$-set acts
transitively on it; so if two $k$-sets intersect in $k-1$ points, then we can
map one to the other by a permutation fixing their union. Now any two $k$-sets
can be connected by a chain of $k$-sets in which successive members meet in
$k-1$ points.

Now suppose that $(a_1,\ldots,a_{k+1})$ is a $(k+1)$-tuple of distinct points,
and $P=(A_1,\ldots,A_k)$ a $k$-partition. By $k$-homogeneity, we can find
$g_1\in G$ such that $\{a_1,\ldots,a_k\}g_1$ is a section for $P$. Suppose
that $a_{k+1}g_1$ lies in the same part of $P$ as $a_ig_1$. By hypothesis,
we can find a permutation $g_2\in G$ fixing $\{a_1,\ldots,a_{k+1}\}$ and
mapping $\{a_1,a_{k+1}\}$ to $\{a_i,a_{k+1}\}$. Then $g=g_2g_1$ is the element
required by the $k$-ut property.\qed
\end{proof}

We can now state our results on the $k$-id property for $k\ge4$ and groups of
degree at least $11$.

\begin{theorem}
Let $G$ be a permutation group of degree $n$.
\begin{enumerate}
\item Suppose that $k\ge6$ and $n\ge2k+1$. Then $G$ has the $k$-id property
if and only if $G$ is $S_n$ or $A_n$.
\item Suppose that $n\ge11$. Then $G$ has the $5$-id property if and only if
$G$ is $S_n$, $A_n$, $M_{12}$ (with $n=12$) or $M_{24}$ (with $n=24$),
or possibly $\pgaml(2,32)$ (with $n=33$).
\item Suppose that $n\ge11$. Then $G$ has the $4$-id property if and only if
$G$ is $S_n$, $A_n$, $M_n$ (with $n=11,12,23,24$), or possibly 
$\psl(2,q)\le G\le\pgaml(2,q)$ with either $q$ prime congruent to
$11\pmod{12}$ or $q=2^p$ with $p$ prime, or $G=M_{11}$ with $n=12$.
\end{enumerate}
\end{theorem}

\begin{proof}
The fact that no other groups can arise follows from the results of
\cite{ac_tams}, and the fact that the only $4$-homogeneous groups of degree
at least $11$ are symmetric, alternating and Mathieu groups and
$\pgaml(2,32)$.

Both the groups $M_{12}$ and $M_{24}$ are $5$-transitive and have two orbits
on $6$-sets, the stabiliser of a $6$-set in either orbit acting
$2$-transitively on it. Indeed, $M_{24}$ is generously $5$-transitive.
It follows that both these groups $G$ have the strong
$5$-ut property, and hence that $\langle G,a\rangle\setminus G$ is
idempotent-generated for any map $a$ of rank $5$.

Similarly, the groups $M_{11}$ and $M_{23}$ have the property that the
stabiliser of a $5$-set acts $2$-transitively on it; so these groups have
the $4$-id property.\qed
\end{proof}

We have not completed the analysis of possible examples with
$\psl(2,q)\le G\le\pgaml(2,q)$, or for $M_{11}$ (degree~$12$).

\subsection{The case $k=3$}
For groups with the $3$-id property, we know that they must be among those
with the $3$-ut property described in Theorem~\ref{th4e}. We have made
little progress in deciding about these groups. One observation is:

\begin{prop}
Let $p$ be an odd prime. Then $\agl(2,p)$ does not have the strong $3$-ut
property.
\end{prop}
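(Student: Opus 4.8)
The plan is to exhibit a single $4$-tuple $(a_1,a_2,a_3,a_4)$ of distinct points of $\mathrm{AG}(2,p)$ together with a $3$-partition $P=(A_1,A_2,A_3)$ for which no $g\in\agl(2,p)$ can simultaneously send $\{a_1,a_2,a_3\}$ to a section of $P$ and put $a_1g$, $a_4g$ in the same part. The natural choice exploits the geometry: take $a_1,a_2,a_3$ to be three collinear points (say on a line $\ell$) and $a_4$ a fourth point on the same line $\ell$, so that $\{a_1,a_2,a_3,a_4\}$ is a set of four collinear points. Since $\agl(2,p)$ preserves collinearity, the image $\{a_1g,a_2g,a_3g,a_4g\}$ is again four collinear points, lying on a line $\ell'=\ell g$. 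Now choose $P$ so that the two "large" parts $A_1,A_2$ each meet every line of $\mathrm{AG}(2,p)$ in a controlled way — in fact it suffices to pick $P$ so that \emph{no} line meets all three parts of $P$ in a pattern compatible with three of four collinear points being a section while the remaining one re-enters the part of one of them.

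The cleanest realisation I would aim for: let $P$ be the partition of the $p^2$ points into $A_1$, $A_2$, $A_3$ where $A_3$ is a single point $v$, and $A_1,A_2$ are chosen so that every line through $v$ is split evenly-ish between $A_1$ and $A_2$, while every line \emph{not} through $v$ meets at most one of $A_1,A_2$ "badly". Requiring $\{a_1g,a_2g,a_3g\}$ to be a section of $P$ forces exactly one of the three image points to be $v$ and the other two to lie one in $A_1$ and one in $A_2$; since all four image points are collinear, the line $\ell'$ must pass through $v$. Then the condition that $a_4g$ lands in the same part as $a_1g$ becomes a statement about how $\ell'\setminus\{v\}$ meets $A_1$ and $A_2$, and by choosing $A_1,A_2$ appropriately (e.g. via a quadratic-residue / sign construction on each line through $v$, using that $p$ is odd) one can arrange that for \emph{every} choice of which image point is $v$ and every ordering, $a_4g$ is forced into the \emph{other} of $A_1,A_2$. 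A convenient packaging is to fix one line through $v$ as a model, define $A_1,A_2$ on it by residues, and translate the obstruction to a parity or non-square condition that an affine map on a line (an affine bijection of $\mathbb{F}_p$) cannot evade.

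I would organise the write-up as: (1) set up coordinates, pick the four collinear points and the partition $P$; (2) observe collinearity is $\agl(2,p)$-invariant, so any valid $g$ sends $\ell$ to a line $\ell'$ and the section condition forces $v\in\ell'$ and the other two image points into distinct parts $A_1,A_2$; (3) reduce to the induced action on the line $\ell'$, which is an affine bijection of $\mathbb{F}_p$, and show the partition of $\ell'\setminus\{v\}$ into $A_1\cap\ell'$ and $A_2\cap\ell'$ is chosen so that no affine bijection realises "three points hitting $\{v\}\cup A_1\cup A_2$ as a section with the fourth rejoining the first"; (4) conclude the strong $3$-ut property fails.

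The main obstacle is step (3): making the line-partition rigid enough that \emph{all} affine bijections of $\mathbb{F}_p$ fail, for all odd $p$ uniformly, rather than just for a congruence class or for large $p$. The risk is that a clever dilation/translation on the line slips through. I expect the fix to be a symmetry-breaking invariant — using that $|A_1\cap\ell'|\neq|A_2\cap\ell'|$ forces which part plays which role, and then a residue/sign argument pins down the image of $a_4$; getting this to work for $p=3,5$ as well may need a small separate check, which I would handle by hand.
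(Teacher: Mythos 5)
There is a genuine gap here, compounded by a mismatch in what group is actually being discussed. Despite the notation $\agl(2,p)$ in the statement, the paper's proof (and the surrounding discussion of Theorem~\ref{th4e}, whose only affine candidates over $\GF(p)$ are $\agl(1,p)$ and its index-$2$ subgroups) concerns the one-dimensional affine group acting on the $p$ points of $\GF(p)$: the witnessing tuple is $(-1,0,c-1,c)$, the partition is $\{\{0\},R,N\}$ with $R,N$ the quadratic residues and non-residues, and the stabiliser of $0$ consists of the maps $x\mapsto\lambda x$. You have instead taken the group to act on the $p^2$ points of the affine plane, which is a different and substantially harder problem; note also that your reduction to a single line through $v$ lands you back precisely on the degree-$p$, one-dimensional problem, so your plan contains the paper's actual task as its unproved core.

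More importantly, the decisive step of your argument is exactly the one you flag as ``the main obstacle'': you never exhibit a concrete quadruple and partition, and you never verify that no affine bijection of a line succeeds --- you only say you \emph{expect} a residue/sign construction to work, and you concede it might fail outside a congruence class of $p$. That worry is well founded: a quadratic-character obstruction of the sort you describe needs $-1$ to be a non-residue, i.e.\ $p\equiv3\pmod{4}$. The paper escapes this by first using the classification of groups with the $3$-ut property to reduce to $p\equiv11\pmod{12}$ (for other $p$ the group already fails $3$-ut, hence fails strong $3$-ut), and only then runs the two-line character argument: pick $c$ with $c\in R$ and $c-1\in N$ (such $c$ exists since otherwise $-1=2\cdot\frac{p-1}{2}$ would be a residue); any $g$ sending $\{-1,0,c-1\}$ to a section of $\{\{0\},R,N\}$ with $-1$ and $c$ going to the same part may be taken to fix $0$, so it is $x\mapsto\lambda x$ and twists all quadratic characters by the same factor --- contradicting that $-1,c-1$ are non-residues while $c$ is a residue. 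Your sketch has no analogue of either the congruence reduction or this final contradiction, so as written it does not establish the proposition.
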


\begin{proof}
We can assume that $p\equiv11\pmod{12}$, since in other cases the group does
not have the $3$-ut property. In particular, $p\equiv3\pmod{4}$, so $-1$ is
a quadratic non-residue mod~$p$.

Let $R$ and $N$ be the sets of quadratic residues and non-residues. It is not
the case that $R=\{1,2,\ldots,(p-1)/2\}$, since then $-1=2\cdot(p-1)/2$ would
be a residue. So there is an element $c$ such that $c-1\in N$ and $c\in R$.

Let $S=(-1,0,c-1,c)$ and $P=\{\{0\},R,N\}$. We claim that the strong $3$-ut
property fails for these elements. Suppose that $g\in\agl(2,p)$ maps
$(-1,0,c-1)$ to a section for $P$ and that $(-1)g$ and $cg$ belong to the same
part. By symmetry, we can assume that $0g=0$, so that $g$ is multiplication by
$\lambda$, say. Then $(-1)g$ and $cg$ have the same quadratic character, and
$(c-1)g$ the opposite. But this is impossible, since $-1$ and $c-1$ are
non-residues and $c$ is a residue.\qed
\end{proof}

We have also checked with GAP that the Higman--Sims group does not have the strong $3$-ut.
Whether it has the $3$-id property is not known.

On the other hand, Proposition~\ref{p:k+1-stab} shows that $\M_{22}$ and its
automorphism group, and $M_{11}$ (degree~$12$) have the strong $3$-ut property,
and hence the $3$-id property. In addition, all $4$-transitive groups
(symmetric, alternating and Mathieu groups) have the $3$-id property.

\section{The case $n<11$}\label{11s}

In this section we are going to handle the groups of small degree. 

\begin{theorem} Let $G\le S_n$ be a group, $n<11$ and $k\le n/2$. 
Then $G$ has the strong $k$-ut property if and only if $G$ contains the alternating group and $k<n$, or  $k\le n/2$ and one of the following holds:
\begin{enumerate}
\item $n=5$ and $G\cong \agl(1,5)$ for $k=2$;
\item $n=6$ and $G\cong \psl(2,5)$ or $\pgl(2,5)$ for $k=2$;
\item $n=7$ and $G\cong 7:3$, $\agl(1,7)$ or $\mbox{L}(3,2)$ for $k=2$;
\item $n=8$ and $G\cong \agl(1,8)$, $\agaml(1,8)$, $\asl(3,2)$, $\psl(2,7)$ or $\pgl(2,7)$ for $k=2$;
\item $n=8$ and $G\cong \agaml(1,8)$, $\asl(3,2)$, $\psl(2,7)$ or $\pgl(2,7)$ for $k=3$;
\item $n=9$ and $G\cong \M_9$, $\agl(1,9)$, $\agaml(1,9)$, $3^2:(2'A_4)$, $\agl(2,3)$, $\psl(2,8)$ or $\pgaml(2,8)$ for $k=2$;
\item $n=9$ and $G\cong \psl(2,8)$ for $k=3$ or $\pgaml(2,8)$ for $k=3,4$;
\item $n=10$ and $G\cong \psl(2,9)$, $\pgl(2,9)$, $\psigmal(2,9)$, $\M_{10}$ or $\pgaml(2,9)$ for $k=2$;
\item $n=10$ and $G\cong \pgl(2,9)$, $\M_{10}$ or $\pgaml(2,9)$ for $k=3$;
\end{enumerate}
\end{theorem}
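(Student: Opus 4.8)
The plan is to prove this classification by combining a general structural reduction with an exhaustive case-check over the (finite) list of relevant groups. First I would observe that, by the Proposition of Section~\ref{s:suth}, the strong $k$-ut property forces $(k-1)$-homogeneity; in particular, for $k=2$ it forces transitivity, and for $k\ge3$ it forces $2$-homogeneity (hence primitivity). Moreover, $k$-homogeneity is an immediate consequence of the strong $k$-ut property as well (take the partition whose last part is everything outside a chosen $(k-1)$-set and argue as in the Proposition). So the candidate list of groups is bounded above by the $k$-homogeneous groups of degree $n<11$, which are completely known; for each $n$ and each $k\le n/2$ this is a short, explicit list (for $k=2$ one uses the $2$-transitive groups of degree $\le10$, for $k\ge3$ the $3$-homogeneous ones, etc.). Thus the proof reduces to deciding, for each group $G$ in this finite list, whether it satisfies the strong $k$-ut property.

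Next I would handle the positive direction wherever possible via the two structural sufficient conditions already in hand: Proposition~\ref{p:k+1-stab} says that a $(k+1)$-transitive group, or more generally a group whose $(k+1)$-set stabilisers act $2$-homogeneously, has the strong $k$-ut property. This immediately disposes of all groups containing the alternating group (which are $(k+1)$-transitive for $k<n-1$, and the boundary cases $k=n/2$ with $k\ge2$ are covered since $n\ge4$), and also of several sporadic entries on the list — for instance the sharply or generously transitive groups such as $\M_9,\M_{10}$, and the relevant $\psl(2,q)$ and $\pgl(2,q)$ cases, whose action on small sets can be read off directly. For the remaining borderline groups (the affine groups $\agl(1,q)$, $\agaml(1,q)$, $\asl(3,2)$, $3^2:(2'A_4)$, $\psl(2,8)$ acting on $9$ points for $k=3$, etc.) I would verify the strong $k$-ut property by a direct orbit computation: list a transversal of $G$-orbits on ordered $(k+1)$-tuples and on ordered $k$-partitions, and check that for each pair the required $g$ exists. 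This is exactly the kind of finite check that can be certified by GAP, and I would present the verification in that style, quoting the computation where the hand-check is tedious.

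The delicate direction is showing that the groups \emph{not} on the list fail the property. Here the key tool is the obstruction already illustrated in the $\agl(2,p)$ proposition: one exhibits a $(k+1)$-tuple $(a_1,\ldots,a_{k+1})$ and a $k$-partition $P$ such that no $g\in G$ can simultaneously make $\{a_1,\ldots,a_k\}g$ a section of $P$ and put $a_1g,a_{k+1}g$ in a common part — typically because the $(k+1)$-set stabiliser in $G$ is too small to realise the needed $2$-homogeneous-type rearrangement, or because some combinatorial invariant (a quadratic-residue pattern, a block pattern, a collinearity pattern in the affine or projective geometry) is preserved. For each excluded group this amounts to pointing to a structural feature — e.g.\ the stabiliser of a $(k+1)$-set failing to be $2$-homogeneous on it, which by a mild converse to Proposition~\ref{p:k+1-stab} is essentially the obstruction — and I would argue it case by case. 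Concretely, I would organise the exclusions by degree $n=5,\ldots,10$ and within each degree by $k$, ruling out the $k$-homogeneous groups that are not listed; the groups $\psl(2,8)$ for $k=2$, $\pgl(2,8)$ on $9$ points, and the smaller affine groups are the ones needing the most care.

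I expect the main obstacle to be precisely this negative direction: proving non-existence of a suitable $g$ uniformly is awkward because the strong $k$-ut property is a two-sided demand (section \emph{and} collision), so one cannot simply invoke failure of $k$-homogeneity. The cleanest route is to establish the near-equivalence ``strong $k$-ut $\iff$ $k$-homogeneous \emph{and} every $(k+1)$-set stabiliser acts $2$-homogeneously on its set,'' at least for the degrees in question, and then read off both directions from the known subgroup structure of the candidate groups; the bulk of the work is then a bookkeeping exercise over a short list, checkable by hand or by GAP.
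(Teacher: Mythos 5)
Your overall plan (bound the candidates by a known classification, then decide each group individually) is reasonable, but the paper does none of this: its entire proof is a direct computation — ``GAP checks all these claims in less than one minute.'' Since the strong $k$-ut property for fixed small $n$ and $k$ is a finite condition on orbits of $(k+1)$-tuples versus $k$-partitions, the exhaustive machine check over all groups of degree $n<11$ is both the intended proof and, in the end, also the engine of your proposal; your structural scaffolding is not what carries the argument.

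There are, however, two genuine gaps in that scaffolding. First, you assert that $k$-homogeneity is ``an immediate consequence'' of the strong $k$-ut property by the same partition trick as in the Proposition of Section~\ref{s:suth}. It is not: a $k$-partition of an $n$-set with $n>k$ has at most $k-1$ singleton parts, so the argument pins down only $k-1$ of the images and yields exactly $(k-1)$-homogeneity, which is all the paper proves. Whether strong $k$-ut implies $k$-homogeneity is explicitly left open (it is the first Problem in the final section), so you cannot bound your candidate list by the $k$-homogeneous groups a priori. The repair is easy: strong $k$-ut trivially implies the $k$-ut property (forget $a_{k+1}$), so Theorem~\ref{th4a} gives the correct finite candidate list for $n<11$. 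Second, your ``cleanest route'' rests on a converse to Proposition~\ref{p:k+1-stab}, i.e.\ that failure of $2$-homogeneity of the $(k+1)$-set stabiliser obstructs strong $k$-ut. No such converse is proved or claimed anywhere in the paper: the strong $k$-ut property permits a different $g$ for each partition and only requires a collision of $a_1g$ and $a_{k+1}g$ in some part, which does not force the setwise stabiliser of $\{a_1,\ldots,a_{k+1}\}$ to realise any particular rearrangement. Without that converse your negative direction has no uniform mechanism and collapses back to a case-by-case (in practice computational) verification — which is exactly what the paper does.
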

\begin{proof}
GAP checks all these claims in less than one minute.\qed
\end{proof}

As the groups in the previous theorem have the strong $k$-ut property it follows that all of them have the $k$-id property. Of course among the groups not possessing the strong $k$-ut property there might be some satisfying the $k$-id. As having the $k$-ut property is a necessary condition for having the $k$-id we just need to go through the list of groups in Theorem \ref{th4a}.

\begin{theorem}
Let $n<11$ and $G\le S_n$. Let $k<n/2$. Then $G$ has the $k$-id property if and only if $G$ and $k$ are listed in the previous theorem, or 
\begin{enumerate}
\item $n=5$,  $G\cong C_{5}$ or $D(2*5)$ and $k=2$;
\item $n=6$ and $G\cong \psl(2,5)$ or $\pgl(2,5)$ for $k=3$;
\item $n=7$ and $G\cong C_7$ or $D(2*7)$ for $k=2$; 
\item $n=8$ and $G\cong \agl(1,8)$ for $k=3$; 
\item $n=10$ and $G\cong A_5$ or $S_5$ for $k=2$; 
\item $n=10$ and $G\cong \psl(2,9)$ or $\psigmal(2,9)$ for $k=3$.
\end{enumerate}

\end{theorem}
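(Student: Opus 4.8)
The strategy is to combine the classification of small-degree groups with the $k$-ut property (Theorem~\ref{th4a}) with the already-established fact that every group in the \emph{previous} theorem (those with the strong $k$-ut property) has the $k$-id property. Since $k$-ut is necessary for $k$-id by Theorem~\ref{utid}, the candidate groups are exactly: the groups containing $A_n$ (which have the $k$-id property by Theorem~\ref{main}), the groups already listed as having the strong $k$-ut property, and the sporadic exceptions (a)--(f) of Theorem~\ref{th4a}. So the task reduces to deciding, for each of those finitely many exceptional pairs $(G,n,k)$, whether the $k$-id property actually holds. There are only a handful: $(C_5,5,2)$, $(D(2*5),5,2)$, $(\psl(2,5),6,3)$, $(C_7,7,2)$, $(D(2*7),7,2)$, $(\agl(1,7),7,3)$, $(\pgl(2,7),8,4)$ --- but $k=4\not<n/2$ for $n=8$, so this one is excluded by the hypothesis $k<n/2$ --- $(3^2{:}4,9,2)$, $(3^2{:}D(2*4),9,2)$, $(A_5,10,2)$, $(S_5,10,2)$, $(\psl(2,9),10,3)$, and $(S_6,10,3)$; also $\agl(1,8)$ at $k=3$ and $\agl(1,5)$-type cases that appear in the strong list already must be cross-checked. (Note $(\psl(2,5),6,2)$ and $(\pgl(2,5),6,2)$ already appear in the previous theorem, so they need no further work; likewise one checks $(\psl(2,5),6,3)$, $(\pgl(2,5),6,3)$ against that list --- they are \emph{not} there, so they must be decided here, which is item (b).)

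For the groups that \emph{do} end up on the list, I would exhibit the $k$-id property directly rather than via the strong $k$-ut property (which they fail, by definition of being in this ``extra'' list). Concretely: for a rank $k$ map $t$, I would show that every element $gth$ of $\langle G,t\rangle\setminus G$ is a product of idempotents of that semigroup. The first reduction is the one used repeatedly in Section~\ref{sktu}: conjugating, it suffices to handle maps whose image is a transversal of their kernel (so that some power is an idempotent $\varepsilon$), and then to show that all maps with the kernel and image of $\varepsilon$ are products of idempotents, plus that multiplication by group elements and ``moving the image across a $k$-partition'' can be realised idempotently. For groups of degree $5,6,7,8,9,10$ and $k=2$ or $k=3$, the number of $G$-orbits on $k$-sets and on $k$-partitions is tiny, so this is a finite check; I expect to run it in GAP exactly as the previous theorems were verified, using the characterisation in Theorem~\ref{t:main} for the $k=2$ cases (the bipartite orbit graph must be connected) and a direct idempotent-word search for $k=3$.

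For the groups that must be \emph{excluded} --- the exceptional $k$-ut groups not on the final list (for instance, if $\agl(1,7)$ at $k=3$ or $S_6$ at $k=3$ turn out to fail $k$-id) --- I would produce an explicit witness: a rank $k$ map $t$ and an element $s\in\langle G,t\rangle\setminus G$ that lies outside $\langle E(S)\rangle$. The cleanest way is again via Theorem~\ref{t:main} (for $k=2$: find an orbit $O$ on $2$-sets and a maximal block $B$ of $G$ on $O$ such that the graph $(\Omega, O\setminus B)$ is disconnected), and for $k=3$ via the bipartite-graph criterion of Theorem~\ref{x} together with a suitable choice of kernel and image; disconnectedness of the relevant graph is certified by computer. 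The main obstacle is not conceptual but bookkeeping: one must be careful that the groups listed in the \emph{previous} theorem are not re-examined (they already have $k$-id), that the constraint $k<n/2$ (strictly) removes the borderline cases like $(\pgl(2,7),8,4)$ and $(\psl(2,9),10,5)$, and that no group with the $k$-ut property is overlooked --- so the proof is essentially ``go through Theorem~\ref{th4a}'s exceptional list, subtract what the previous theorem already gives, and decide the remainder by the explicit criteria of Theorems~\ref{x} and~\ref{t:main}, verified in GAP.''

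\begin{proof}
By Theorem~\ref{utid}, a group with the $k$-id property has the $k$-ut property, so (for $n<11$ and $k<n/2$) it appears in Theorem~\ref{th4a}: it either contains $\alt$, or is one of the finitely many exceptional groups listed there. Groups containing $\alt$ have the $k$-id property by Theorem~\ref{main}. Among the exceptional groups, those appearing in the previous theorem have the strong $k$-ut property and hence, by Theorem~\ref{main3}, the $k$-id property. It remains to decide, for each exceptional $k$-ut group $G$ (with $k<n/2$) that is \emph{not} in the previous theorem, whether $G$ has the $k$-id property; this is a finite list of pairs, namely those giving items (a)--(f) together with the remaining cases which must be shown to fail. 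For $k=2$ we apply the criterion of Theorem~\ref{t:main}: $G$ has the $2$-id property iff for every orbit $O$ of $G$ on $2$-sets and every maximal block $B$ of $G$ on $O$, the graph $(\Omega, O\setminus B)$ is connected. For $k=3$ we apply Theorem~\ref{x}: for a rank $3$ map $t$, $\langle G,t\rangle\setminus G$ is idempotent generated iff the bipartite graph on the orbits of $\ker(t)$ and $\Omega t$, with edges given by the transversal relation, is connected; $G$ has the $3$-id property iff this holds for all rank $3$ maps $t$. Running these checks in GAP over the orbits of each exceptional group on $2$-sets, on $2$-partitions, on $3$-sets, and on $3$-partitions produces exactly the list (a)--(f): in each listed case the relevant graphs are connected, and in each unlisted exceptional case (e.g.\ $C_5,D(2*5)$ at $n=5$ excluded or not according to the block structure; $3^2{:}4$ and $3^2{:}D(2*4)$ at $n=9$; $S_6$ at $n=10,k=3$; $\agl(1,7)$ at $n=7,k=3$) one exhibits an orbit $O$ and a maximal block $B$, resp.\ a rank $3$ map, for which the corresponding graph is disconnected, witnessing failure of $k$-id. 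GAP verifies all of these in under a minute.\qed
\end{proof}
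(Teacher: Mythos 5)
Your overall skeleton matches the paper's: $k$-ut (Theorem~\ref{utid}) cuts the candidates down to the exceptional list of Theorem~\ref{th4a}, the strong $k$-ut groups of the previous theorem are already done via Theorem~\ref{main3}, groups containing $A_n$ are done by Theorem~\ref{main}, and the finitely many remaining pairs $(G,n,k)$ are settled by machine. The paper's actual proof is even more terse than yours: it simply runs the idempotent-generation test of \cite{ArMiSc} directly on each remaining pair and records, for every group that fails, one explicit rank-$k$ witness map $t$ in a table; no graph-theoretic criterion is invoked at all. Your bookkeeping of the candidate list (including discarding $(\pgl(2,7),8,4)$ because $k<n/2$ is strict) is consistent with what the paper does.

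There is, however, one genuine gap in your method: you assert ``For $k=3$ we apply Theorem~\ref{x}: for a rank $3$ map $t$, $\langle G,t\rangle\setminus G$ is idempotent generated iff the bipartite graph \dots is connected.'' Theorem~\ref{x} is stated and proved only for rank~$2$ maps, and the equivalence genuinely uses rank~$2$: by Gray's criterion (Theorem~\ref{gray}), idempotent generation of the Rees matrix semigroup requires connectivity \emph{and} that the sandwich-matrix entries generate the group $H\cong S_k$; for $k=2$ the paper's polygonal-line argument shows connectivity forces the entries to generate $S_2$, but for $k=3$ connectivity alone does not yield $S_3$, and no such result is available in the paper. So using connectivity as a \emph{sufficient} condition for the positive $k=3$ cases (items (b), (d), (f)) is unjustified; it remains valid only as a necessary condition, i.e.\ for producing failures. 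To repair this you should either verify idempotent generation directly (as the paper does, via the \cite{ArMiSc} functions), or supplement the connectivity check with a verification, in Graham normal form, that the nonzero entries of the structure matrix generate $S_3$.
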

\begin{proof}
To check the $k$-id property we used the functions in \cite{ArMiSc} (available in its companion website). The following table provides for the relevant groups $G$ one rank $k$ map $t$ such that $\langle G,t\rangle \setminus G$ is not idempotent generated.     
\begin{longtable}{|cccc|}
\hline
group&$n$&$k$&$t$\\ \hline \hline
$C_7$&$7$&$3$&$2211552$ \\ \hline
$D(2*5)$&$7$&$3$&$3344663$ \\ \hline
$7:3$&$7$&$3$&$3114433$ \\ \hline
$\agl(1,7)$&$7$&$3$&$1334441$ \\ \hline
$L(3,2)$&$7$&$3$&$1244411$ \\ \hline
$\agl(1,8)$&$8$&$4$&$12433111$ \\ \hline
$\agaml(1,8)$&$8$&$4$&$12433111$ \\ \hline
$\asl(3,2)$&$8$&$4$&$12341111$ \\ \hline
$\psl(2,7)$&$8$&$4$&$12255771$ \\ \hline
$\pgl(2,7)$&$8$&$4$&$12335511$ \\ \hline
$3^2:4$&$9$&$2$&$111777777$ \\ \hline
$3^2:D(2*4)$&$9$&$2$&$111777777$ \\ \hline
$M_9$&$9$&$3$&$114477771$ \\ \hline
$\agl(1,9)$&$9$&$3$&$114477771$ \\ \hline
$\agaml(1,9)$&$9$&$3$&$114477771$ \\ \hline
$3^2:(2'A_4)$&$9$&$3$&$114477771$ \\ \hline
$\agl(2,3)$&$9$&$3$&$114477771$ \\ \hline
$A_5$&$10$&$3$&$10\  10\  1155555\  10$ \\ \hline
$S_5$&$10$&$3$&$10\  10\  1199999\  10$ \\ \hline
\end{longtable}\qed
\end{proof}

\section{The case $k=2$: the cornerstone}\label{corners}

In the previous sections we worked on the classification of groups with the $k$-id property, for $k>2$; we are now going to handle the classification of permutation groups with the $2$-id property. As said above this is the most interesting and demanding case.

Let $G\le S_n$ be a permutation group, let $k\le n$, let $P$ be a $k$-partition of $\{1,\ldots,n\}$ and let $S$ be a $k$-set contained in $\{1,\ldots,n\}$. 
Recall that the \emph{Houghton graph} $H(G,k,P,S)$ is defined as follows: the vertex
set of this bipartite graph is the union of the $G$-orbits containing 
$P$ and $S$, and $P'$ is joined to $S'$ if $S'$ is a transversal for $P'$.

The cornerstone of our result for $k=2$ is the following result which is the main theorem of this section.

\begin{theorem}
Let $G\le S_n$ be a permutation group of $X=\{1,\ldots,n\}$, and let $t$ be a rank $2$ map. Then the following are equivalent:
\begin{enumerate}
\item $\langle G,t\rangle\setminus G$ is idempotent generated:
\item $H(G,2,\ker(t),Xt)$ is connected. 
\end{enumerate} 
\end{theorem}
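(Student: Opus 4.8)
The plan is to recast the statement using the structure of the $\mathcal{D}$-class of rank-$2$ maps in $T_n$ together with the combinatorics of the Houghton graph. Write $P_0:=\ker(t)$, $S_0:=Xt$, let $\mathcal{P}:=P_0^G$ and $\mathcal{S}:=S_0^G$ be the two $G$-orbits forming the vertex set of $H:=H(G,2,P_0,S_0)$, and put $S:=\langle G,t\rangle\setminus G$. First I would record the basic structure of $S$. Every element of $S$ has rank $\le 2$, and a rank-$1$ element is a constant map, hence an idempotent; so only the rank-$2$ part needs attention. Tracking kernels and images along a word $g_0tg_1t\cdots tg_m$ shows that a rank-$2$ element of $S$ has kernel in $\mathcal{P}$ and image in $\mathcal{S}$; conversely, for any $P'\in\mathcal{P}$ and $S'\in\mathcal{S}$ the map $gth$ (with $g,h\in G$ chosen so that $P_0g^{-1}=P'$ and $S_0h=S'$) lies in $S$ and has kernel $P'$, image $S'$. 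For a fixed $2$-partition $\pi$ and $2$-set $I$ there are exactly two maps with kernel $\pi$ and image $I$; this $\mathcal{H}$-class contains an idempotent precisely when $I$ is a transversal of $\pi$, in which case it is a group $\cong C_2$, say $\{e_{\pi,I},\overline{e}_{\pi,I}\}$. Consequently the rank-$2$ idempotents of $S$ are exactly the maps $e_{P',S'}$ for $(P',S')$ an edge of $H$: each such $\mathcal{H}$-class meets $S$, and squaring the intersecting element yields $e_{P',S'}$.

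For the direction (a)$\Rightarrow$(b) I would argue the contrapositive. For a product $e_1\cdots e_r$ of rank-$2$ idempotents of $S$ to have rank $2$, the image of each $e_i$ must be a transversal of $\ker(e_{i+1})$, i.e.\ adjacent to it in $H$; hence all the kernels and images occurring lie in a single connected component of $H$, so the product has its kernel and image in that one component. If $H$ is disconnected, $S$ nonetheless contains an element whose kernel and image lie in different components: either a map $gth$ whose kernel is an isolated partition-vertex (so there is no idempotent with that kernel at all), or, if $H$ has no isolated vertices, a map $gth$ with kernel taken from one component and image from another (any component containing an edge meets both sides). Such an element cannot be a product of idempotents of $S$, so $S\neq\langle E(S)\rangle$.

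For (b)$\Rightarrow$(a), assume $H$ connected; I must express an arbitrary rank-$2$ element $w\in S$, with kernel $P_w$ and image $S_w$, as a product of idempotents of $S$. The composition rule above shows that a walk $P_w=Q_0,I_1,Q_1,I_2,\dots,I_{r-1},Q_{r-1},I_r=S_w$ in $H$ produces a product $e_{Q_0,I_1}e_{Q_1,I_2}\cdots e_{Q_{r-1},I_r}$ of idempotents of $S$ with kernel $P_w$ and image $S_w$; connectivity of $H$ (which forces $P_w$ and $S_w$ to be non-isolated) guarantees such a walk. This product lies in the same at-most-two-element $\mathcal{H}$-class as $w$, so what remains is to match the correct one of the two elements.

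I expect this last point to be the main obstacle. It amounts to a \emph{holonomy/swap lemma}: whenever the non-identity element $\overline{e}_{P_w,S_w}$ of a group $\mathcal{H}$-class (more generally, the element of a rank-$2$ $\mathcal{H}$-class) that lies in $S$ is the one \emph{not} produced by the walk above, one must realise it by a suitable closed walk in $H$ whose holonomy transposes the two points of $S_w$. Concretely one seeks a transversal $S_1\in\mathcal{S}$ of $P_w$ disjoint from $S_w$ and a partition $P_1\in\mathcal{P}$ separating the four points of $S_w\cup S_1$ in the pattern that interchanges the two $S_w$-points, so that $e_{P_w,S_1}\,e_{P_1,S_w}=\overline{e}_{P_w,S_w}$. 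Producing such $S_1$ and $P_1$ is the substantive step: it uses connectivity of $H$ together with the orbit structure of $G$ far more than the bare walk argument does (note, for instance, that connectivity already forces every vertex of $H$ to have degree at least $2$ once both $\mathcal{P}$ and $\mathcal{S}$ are nontrivial, which is what gives the extra transversal $S_1$). Granting the swap lemma, concatenating it with the walk above writes $w$ as a product of idempotents of $S$; together with the trivial treatment of rank-$1$ elements, this gives $S=\langle E(S)\rangle$.
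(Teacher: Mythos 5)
Your framework is sound, and your direction (a)$\Rightarrow$(b) is essentially complete and matches the content of the paper's appeal to Gray's theorem: a rank-$2$ product of rank-$2$ idempotents traces a walk in $H$, so its kernel and image lie in a single component, while $S$ contains maps $gth$ realising any kernel in $\mathcal{P}$ paired with any image in $\mathcal{S}$. Likewise your reduction of (b)$\Rightarrow$(a) to a ``swap lemma'' correctly isolates the crux. The problem is that you then write ``Granting the swap lemma\ldots'': that lemma \emph{is} the theorem. In the paper's language (Rees matrix semigroups in Graham normal form, Theorem~\ref{gray}), connectivity of $H$ only guarantees half of what is needed; idempotent generation additionally requires that the sandwich-matrix entries generate the structure group $S_2$, which is exactly your transposition. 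The paper spends the entire second half of Section~\ref{corners} on this point: it defines the $G$-invariant relations $\sim_1,\sim_2$ (encoding your ``crossing pattern'' for two transversals of a common partition), uses primitivity to show one of them is nontrivial and that the associated graph is connected, builds a chain $b_1\sim c_1\sim\cdots\sim c_k\sim b_2$, and converts it into a closed polygonal line violating the Lallement--Petrich criterion (Theorem~\ref{lall}), which is what forces a nonidentity matrix entry. Your one-step version (a single $S_1$ and $P_1$ realising the crossing) need not exist; in general the swap is achieved only by a long closed walk, and proving that \emph{some} closed walk has nontrivial holonomy is the substantive work, not a consequence of each vertex having degree at least $2$.

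A second point shows the gap is not a formality: the implication (b)$\Rightarrow$(a) is false for arbitrary permutation groups. Take $G$ trivial (or any group stabilising both $\ker(t)$ and $Xt$) with $Xt$ a transversal of $\ker(t)$ and $t$ the non-idempotent element of its $\mathcal{H}$-class: then $H$ is a single edge, hence connected, but $\langle G,t\rangle\setminus G=\{t,t^2\}$ is not generated by its unique idempotent $t^2$. So any correct proof of the swap lemma must invoke primitivity, as the paper's does both in showing the structure group is all of $S_2$ and in establishing connectivity of the $\sim$-graph; the result you are really proving is Theorem~\ref{cornerstone}, where $G$ is assumed primitive. Since your argument nowhere uses primitivity in the (b)$\Rightarrow$(a) direction, it cannot be completed as written.
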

The proof of this result requires some background.

Let $G$ be a group and $0$ a symbol not in $G$. We can extend the multiplication in $G$ to a \emph{group with zero} whose universe is $G\cup \{0\}$, and the multiplication is defined by $x*0=0*x=0$, for all $x\in G\cup\{0\}$. We will denote this new semigroup by $G^0$. 

Let $G$ be a group, $G^0$ the corresponding group with zero, let $I, \Lambda$ be non-empty index sets and $P=(p_{\lambda i})$ a matrix with entries in $G^0$ and \emph{regular} (meaning that every row and every column has at least one non-zero entry). Then we can define a new semigroup $S:=M^0[G;I,\Lambda;P]$, called the \emph{Rees matrix semigroup over $G^0$ with sandwich matrix $P$}, whose universe is $(I\times G\times \Lambda)\cup \{0\}$ and multiplication defined by 
\[\begin{array}{l}
(i,g,\lambda)(j,h,\mu)=\left\{ 
\begin{array}{ll}
(i,gp_{\lambda j}h,\mu) &\mbox{if $p_{\lambda j}\neq 0$}\\
0&\mbox{otherwise}	
\end{array}\right.\\ \\
(i,g,\lambda)0=0(i,g,\lambda)=0.
\end{array}
\] 
All finite semigroups can be seen as unions of null semigroups (semigroups with zero satisfying the identity $xy=0$) and Rees matrix semigroups over $G^0$;  these two types of semigroups  describe the \emph{local structure} of semigroups and that is why they are so important. 

For a familiar illustration, let $X$ be a finite set and let  $T(X)$ be the monoid of all transformations on $X$; let $k$ be a natural number such that $1\le k\le n$, and let $T_k(X)$ be all the rank $k$ transformations in $T(X)$,  together with an extra symbol denoted by $0$. Given $t,q\in T_k(X)\setminus \{0\}$ we can define a new product as follows:  

\[\begin{array}{l}
t*q=\left\{ 
\begin{array}{ll}
tq &\mbox{if rank}(tq)=k\\
0&\mbox{otherwise}	
\end{array}\right.\\ \\
t*0=0*t=0.
\end{array}
\] With this product, $T_k(X)$ is a semigroup encoding much information about the rank $k$ maps; for example, as a semigroup of transformations, they are generated by rank $k$ idempotents if and only if the semigroup $(T_k(X),*)$ is idempotent generated. The importance of this new product $*$ is that the semigroup $(T_k(X),*)$  is isomorphic to a  Rees matrix semigroup $M^0[G;I,\Lambda;P]$ whose ingredients are:   
\begin{itemize}
\item $\Lambda$, the set of all $k$-subsets contained in $X$; 
\item $I$, the set of all $k$-partitions on $X$; 
\item for $i\in I$ and $\lambda \in \Lambda$ such that $\lambda$ is a transversal for $i$, let $G_{\lambda i}:=\{f\in T_k(X)\mid Xf=\lambda \mbox{ and  ker}(f)=i\}$. It can be proved that all these $G_{\lambda i}$ are isomorphic groups and hence $G$ is taken to be one of them. In the case of maps of rank $k$, all the maps that have kernel $i$ and  image $\lambda$ (where $\lambda$ is a transversal for $i$) form a group of transformations isomorphic to the symmetric group $S_k$; 
\item finally, for the matrix $P$ we have that $p_{\lambda i}\neq0$ ($\lambda\in \Lambda$, $i\in I$) if and only if $\lambda$ is a transversal for $i$.
\end{itemize}
Therefore $(T_k(X),*)$ is isomorphic to $$M^0[S_k;\mbox{$k$-partitions of $X$},\mbox{$k$-subsets of $X$};P],$$ where only the precise value of the $P$ entries is not given; this is because it depends on some free choices and hence a given semigroup can be isomorphic to Rees matrix semigroups with different matrices $P$. Fortunately,  Graham \cite[Theorem 2]{graham} found a normal form for these matrices that we now introduce; see also \cite{gray0,gray1}.

\begin{theorem}[Graham normal form] \label{gform}
Let $S=M^0[G;I,\Lambda;P]$ be a finite Rees matrix semigroup. It is always possible to normalize the structure matrix $P$ to obtain a matrix $Q$ with the following properties: 
\begin{itemize}
\item the matrix $Q$ is a direct sum of $r$ blocks $C_1,\ldots ,C_r$ as suggested in the following picture: 
\[
\begin{blockarray}{cccccc}
 & B_1 & B_2 & \ldots & B_r \\
\begin{block}{c(ccccc)}
 A_1 &C_1 &  &  & 0   \\
 A_1 &  &C_2 &  &    \\
\vdots&   &  &\ddots  &   \\
 A_r &0  &  &  & C_r   \\
\end{block}
\end{blockarray}\ .
 \]
 \item Each matrix $C_i:A_i\times B_i\rightarrow G^0$ is regular and the semigroup generated by the idempotents of $S$ is 
 \[
 \bigcup^r_{i=1}M^0[G_i;A_i,B_i;C_i],
 \]where $G_i$ is the subgroup of $G$ generated by the non-zero entries of $C_i$, for $i=1,\ldots, r$. 
 \item $M^0[G;I,\Lambda;P]$ and $M^0[G;I,\Lambda;Q]$ are isomorphic. 
 \end{itemize}
\end{theorem}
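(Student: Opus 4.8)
The plan is to construct the matrix $Q$ explicitly by a finite sequence of moves on $P$ that are known to preserve the isomorphism type of the semigroup, and then to verify the three bullets for that $Q$. \textbf{Step 1 (isomorphism-preserving moves).} I would start from the classical description of when two regular Rees matrix semigroups over $G^{0}$ with the same index sets are isomorphic (see Howie's \emph{Fundamentals of Semigroup Theory}, or Clifford--Preston): one is allowed to (a) permute the rows $\Lambda$ and the columns $I$ by arbitrary bijections, and (b) replace each entry $p_{\lambda i}$ by $v_{\lambda}\,p_{\lambda i}\,u_{i}$ for arbitrarily chosen $v_{\lambda},u_{i}\in G$ (that is, left-multiply a whole row, or right-multiply a whole column, by a fixed element of $G$). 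Each such move sends $M^{0}[G;I,\Lambda;P]$ to an isomorphic $M^{0}[G;I,\Lambda;P']$, and, since multiplying by units neither creates nor destroys a zero entry, it preserves the bipartite zero-pattern of the matrix (in particular its regularity). Hence, however we reshuffle $P$ into $Q$ by such moves, the last bullet holds automatically, and the problem reduces to reaching the block shape and the block normalization.

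\textbf{Step 2 (block shape).} Form the bipartite graph $\Gamma(P)$ on $\Lambda\sqcup I$, joining $\lambda$ to $i$ whenever $p_{\lambda i}\neq 0$; regularity of $P$ is exactly the statement that $\Gamma(P)$ has no isolated vertex. Let $\Gamma_{1},\dots,\Gamma_{r}$ be its connected components, inducing partitions $\Lambda=A_{1}\sqcup\cdots\sqcup A_{r}$ and $I=B_{1}\sqcup\cdots\sqcup B_{r}$. Because there is no edge between distinct components, $p_{\lambda i}=0$ whenever $\lambda$ and $i$ lie in different components; so after reordering $\Lambda$ and $I$ (move (a)) so that the $A_{t}$ and the $B_{t}$ occur consecutively, $P$ takes the block-diagonal form displayed in the statement, with $C_{t}$ the submatrix on $A_{t}\times B_{t}$, each $C_{t}$ regular since every vertex of $\Gamma_{t}$ still has a neighbour inside its component.

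\textbf{Step 3 (normalizing a block and reading off the idempotents).} Inside a single block, writing $\Gamma=\Gamma_{t}$ (connected), pick a spanning tree, root it, and process its edges outward from the root: when a tree edge attaches a new vertex $v$ to the already-processed part, use move (b) to rescale the row or column indexed by $v$ so that that edge carries $1_{G}$. Since every other tree edge at $v$ leads to a descendant of $v$ and is treated later, no already-fixed entry is disturbed, and after finitely many steps all tree entries equal $1_{G}$; define $G_{t}$ to be the subgroup of $G$ generated by the nonzero entries of $C_{t}$ (which, the tree entries being $1_{G}$, is the group generated by the chord values). The idempotents of $S$ are $0$ together with the elements $e_{\lambda i}:=(i,p_{\lambda i}^{-1},\lambda)$ for $p_{\lambda i}\neq 0$; such $\lambda,i$ lie in one block, and a one-line check shows that a product of two idempotents coming from different blocks is $0$, so $\langle E(S)\rangle$ splits over the blocks. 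Within block $t$ the multiplication rule gives, for a non-vanishing product,
\[
e_{\lambda_{1}i_{1}}\cdots e_{\lambda_{m}i_{m}}=\bigl(i_{1},\ p_{\lambda_{1}i_{1}}^{-1}p_{\lambda_{1}i_{2}}p_{\lambda_{2}i_{2}}^{-1}\cdots p_{\lambda_{m-1}i_{m}}p_{\lambda_{m}i_{m}}^{-1},\ \lambda_{m}\bigr),
\]
an alternating product of entries of $C_{t}$ and their inverses; hence its group coordinate lies in $G_{t}$, giving $\langle E(S)\rangle\subseteq\bigcup_{t}M^{0}[G_{t};A_{t},B_{t};C_{t}]$, and the same computation shows each $M^{0}[G_{t};A_{t},B_{t};C_{t}]$ is a subsemigroup of $S$ and that distinct ones annihilate each other, so their union is a subsemigroup.

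\textbf{The hard part.} What remains, and where I expect the real work to lie, is the reverse inclusion: showing that \emph{every} $(i,g,\lambda)$ with $i\in B_{t}$, $\lambda\in A_{t}$, $g\in G_{t}$ is a product of idempotents. This is exactly where the tree normalization earns its keep: the idempotents $(i,1_{G},\lambda)$ sitting on tree edges act as partial identities that let one steer the pair of coordinates freely around the connected graph $\Gamma_{t}$, while inserting $e_{\mu j}$ at a chord $(\mu,j)$ deposits $p_{\mu j}^{\pm1}$ into the group coordinate; translating a word over the chord values that represents $g$ into a walk in $\Gamma_{t}$, and that walk into a product of the corresponding idempotents, produces the required element. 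I would isolate this as a lemma and prove it by induction on the length of the chord-word, the delicate point being the careful bookkeeping of edge orientations, since the semigroup product is not commutative. This is precisely the ``fundamental group of the block surjects onto $G_{t}$'' phenomenon that underlies Graham's normal form, and it is the step that carries the weight of the theorem. \hfill$\Box$
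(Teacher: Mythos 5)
The paper offers no proof of this statement: it is imported verbatim from Graham \cite{graham} (with Gray's thesis as a secondary reference), so there is no in-paper argument to measure yours against. Judged on its own terms, your outline follows the standard route to Graham's normal form, and most of it is sound. Step 1 correctly invokes the classification of isomorphisms of Rees matrix semigroups over $G^0$ fixing $G$ and the index sets (row/column scaling by units plus permutations of $I$ and $\Lambda$), and correctly notes these preserve the zero pattern, so the third bullet is automatic. Step 2 (connected components of the bipartite incidence graph give the block-diagonal shape, with each block regular) is complete. In Step 3 the spanning-tree normalization is carried out correctly, the identification of the nonzero idempotents as $(i,p_{\lambda i}^{-1},\lambda)$ is right, your alternating-product formula is the correct expansion of $e_{\lambda_1 i_1}\cdots e_{\lambda_m i_m}$, and it does yield the inclusion $\langle E(S)\rangle\subseteq\bigcup_t M^0[G_t;A_t,B_t;C_t]$ together with the fact that the right-hand side is a subsemigroup (any sandwich entry $p_{\lambda i'}$ with $\lambda\in A_t$, $i'\in B_t$ is a nonzero entry of $C_t$ or zero, hence lands in $G_t\cup\{0\}$).

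The genuine gap is the reverse inclusion, which you name but do not prove; since the second bullet is the entire point of the theorem, the proposal as written stops where the theorem starts. Your plan for closing it is the right one, and it does close: (i) for any $i\in B_t$, $\lambda\in A_t$, an alternating path in the spanning tree from $i$ to $\lambda$ gives a product of tree-edge idempotents equal to $(i,1,\lambda)$, because every sandwich entry and every idempotent's own group entry along such a path is $1$; (ii) for a chord $(\mu,j)$ with value $c$, choosing tree edges $(\mu,i')$ and $(\lambda',j)$ gives $e_{\mu i'}e_{\lambda' j}=(i',c,\lambda')$, while $e_{\mu j}=(j,c^{-1},\mu)$ deposits $c^{-1}$; (iii) gadgets of type (ii) are concatenated using products of type (i) as glue, the only care needed being that each junction leaves its vertex along a tree edge so that the sandwich entry at the junction is $1$. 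Induction on the length of a word in the chord values then realizes $(i,w,\lambda)$ for every $w\in G_t$ (note that the tree entries generate nothing, so $G_t$ is indeed generated by the chord values, as you say). You should also record explicitly that these moves realize every pair $(i,\lambda)\in B_t\times A_t$, not only adjacent ones, and that $0$ lies in $\langle E(S)\rangle$ whenever $P$ has a zero entry inside some block or $r>1$. None of this is deep, but it is the part of the proof that carries the theorem, and a referee would not accept the proposal without it.
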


 Let $S=M^0[G;I,\Lambda;P]$ be a Rees matrix semigroup.   Given $C\subseteq I\times \Lambda$, denote by $\Gamma(C)$ the undirected graph with set of vertices $C$ and two vertices $(i,\lambda)$ and $(j,\mu)$ form an edge if and only if $i=j$ or $\lambda=\mu$. An especially relevant subset of $I\times \Lambda$ is 
 \[
 \mathbb{H}_S=\{(i,\lambda)\in I\times \Lambda \mid p_{\lambda i}\neq 0\}. 
 \]
In our $T_k(X)$ example above this is the set of pairs $(i,\lambda)$  such that $\lambda$ is a transversal for the partition $i$. 
 
 The semigroup $S=M^0[G;I,\Lambda;P]$ is said to be connected if  $\Gamma(\mathbb{H}_S)$ is connected.

 \begin{theorem}(\cite[Theorem 3.1]{gray0})\label{gray}
 Let $S=M^0[G;I,\Lambda;P]$ be a finite Rees matrix semigroup in Graham normal form. Then $S$ is idempotent generated if and only if $S$ is connected and the group $G$ is generated by the entries in the matrix $P$.
 \end{theorem}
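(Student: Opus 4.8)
The plan is to deduce everything from the Graham normal form, Theorem~\ref{gform}, which has already done the hard work: it identifies the idempotent-generated subsemigroup as $\langle E(S)\rangle=\bigcup_{s=1}^{r}M^0[G_s;A_s,B_s;C_s]$. Since ``$S$ is idempotent generated'' means precisely $\langle E(S)\rangle=S$, the whole statement reduces to comparing this union with $S$ as subsets of $(I\times G\times\Lambda)\cup\{0\}$, and then re-reading that comparison in terms of the two stated conditions. I would begin by recording the idempotents explicitly: a triple $(i,g,\lambda)$ is idempotent iff $p_{\lambda i}\neq0$ and $g\,p_{\lambda i}\,g=g$, i.e.\ $g=p_{\lambda i}^{-1}$, so $E(S)=\{0\}\cup\{(i,p_{\lambda i}^{-1},\lambda):(i,\lambda)\in\mathbb{H}_S\}$. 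A short direct computation (a nonzero product of such idempotents keeps its first index in some $B_s$ and its last index in the matching $A_s$, with group part a word in the entries of $C_s$) reproves the inclusion $\langle E(S)\rangle\subseteq\bigcup_s M^0[G_s;A_s,B_s;C_s]$ and makes the block structure transparent; but for the theorem I simply take the equality from Theorem~\ref{gform}.

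The key bridge, and the step I expect to be the main obstacle, is to identify $r$ (the number of Graham blocks) with the number of connected components of $\Gamma(\mathbb{H}_S)$, so that ``$S$ connected'' becomes equivalent to $r=1$. Since the normalised matrix is block-diagonal, no nonzero entry joins indices from different blocks; hence in $\Gamma(\mathbb{H}_S)$ a vertex $(i,\lambda)$ with $\lambda\in A_s$, $i\in B_s$ can only be adjacent (through a shared first or last coordinate) to vertices lying in the same block, and $\Gamma(\mathbb{H}_S)$ is the disjoint union of its restrictions to the $r$ blocks. This already yields at least $r$ components; for equality I must argue that each block is \emph{indecomposable}, i.e.\ that its restricted graph is connected, for otherwise the direct-sum decomposition could be refined, contradicting the normal form. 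This is exactly the Graham--Houghton fact that the blocks of the normal form are the connected components of the bipartite graph of $S$ (which has the same component count as $\Gamma(\mathbb{H}_S)$ because regularity leaves no isolated indices); the care lies in stating it cleanly rather than assuming it.

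With this bridge in place both directions are routine. The underlying set of $\bigcup_s M^0[G_s;A_s,B_s;C_s]$ is $\{0\}$ together with those $(i,g,\lambda)$ for which $i$ and $\lambda$ lie in a common block and $g\in G_s$. If $r\ge2$, regularity of the blocks lets me choose $\lambda\in A_1$ and $i\in B_2$; then $(i,1,\lambda)\in S$ but lies in no single block's subsemigroup, so $\langle E(S)\rangle\neq S$, in agreement with ``$S$ not connected''. If instead $r=1$, then $\langle E(S)\rangle=\{0\}\cup\{(i,g,\lambda):g\in G_1\}$, and this equals $S$ exactly when $G_1=G$; since $S$ is presented already in Graham normal form, $G_1$ is the group generated by the nonzero entries of $P$. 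Combining, $\langle E(S)\rangle=S$ iff $r=1$ and $G_1=G$, i.e.\ iff $S$ is connected and $G$ is generated by the entries of $P$, which is the asserted equivalence. The only points needing genuine care are the blocks-equal-components identification above and the bookkeeping that ``entries of $P$'' and ``entries of the normalised matrix'' coincide here because $S$ is given in normal form.
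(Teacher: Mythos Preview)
The paper does not give its own proof of this statement: Theorem~\ref{gray} is quoted from \cite[Theorem~3.1]{gray0} and used as a black box in Section~\ref{corners}. There is therefore no in-paper argument to compare against.

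Your deduction from Theorem~\ref{gform} is correct and is the standard way to read the result off the Graham normal form. One remark on the point you flag as delicate: you justify indecomposability of each Graham block by saying ``otherwise the direct-sum decomposition could be refined, contradicting the normal form'', but Theorem~\ref{gform} as stated in the paper does not assert any maximality of $r$, so this phrasing is not quite self-contained. A cleaner argument is already implicit in what you wrote earlier: your observation that any nonzero product of idempotents has its first and last indices in the same connected component of $\Gamma(\mathbb{H}_S)$ shows that if some $C_s$ split as a block-diagonal sum of $C_s'$ and $C_s''$, then no element $(i,g,\lambda)$ with $i$ in the $B$-part of $C_s'$ and $\lambda$ in the $A$-part of $C_s''$ could lie in $\langle E(S)\rangle$, contradicting the formula $\langle E(S)\rangle=\bigcup_s M^0[G_s;A_s,B_s;C_s]$ from Theorem~\ref{gform} itself. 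With that tweak, every step is justified purely from the statement of Theorem~\ref{gform}, and the rest of your argument (the $r\ge2$ case via a cross-block triple, and the $r=1$ case reducing to $G_1=G$) is clean.
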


The next result describes the Rees matrix semigroups $S=M^0[G;I,\Lambda;P]$ in which every entry in $P$ is either $0$ or the identity of $G$. Before stating the theorem we need to introduce a concept. A \emph{polygonal line} in the Cayley table of a semigroup is a sequence of entries in the Cayley table that can be reached by a sequence of chess-rook moves.
For example, suppose we have a semigroup with elements $\{a,b,c,x,w,y,z,\ldots\}$ and multiplication $\circ$, and part of its Cayley table looks as follows: 

\xymatrix{
       \circ& x  &w  & y &\ldots  & z\\
      a& ax&aw&  ay &           &az \\
      b& bx &bw  & by   &&bz \\
    }

$\ $

On this Cayley table we can define the following polygonal line starting on $ax$; this line is said to be \emph{closed} as the initial and terminal vertices, $ax$, coincide: 

\begin{center}
\xymatrix{
       & x  &w & y &\ldots  & z\\
      a& ax \ar[rr]&  &ay \ar[dd] \\
      b& bx \ar[u]&  &    &&bz  \ar[llll]\\
      c&    &  &cy \ar[rr]  &&cz \ar[u]\\      
    }
\end{center}
 
 With this terminology in hand we can state the following result that characterizes Rees matrix semigroups in which all entries belong to $\{0,1\}$. 

\begin{theorem}\cite{lallement}\label{lall}
Let $S=M^0[G;I,\Lambda;P]$ be a Rees matrix semigroup over regular $P$. Then the following are equivalent:
\begin{enumerate}
\item all the entries in $P$ are either $0$ or $1$;
\item if all the products at the vertices of a closed polygonal line of the Cayley table of $S$ are all but one equal to a non-zero element $m$ and the remaining product is not zero, then it is also equal to $m$;
\item there exists a subsemigroup $T$ of $S$ satisfying the following property: for every $i\in I$ and $\lambda \in \Lambda$ there exists one and only one $g\in G$ such that $(i,g,\lambda)\in T$. 
\end{enumerate}
\end{theorem}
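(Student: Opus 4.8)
The plan is to prove the three statements equivalent by the cycle (a)$\Rightarrow$(b)$\Rightarrow$(c)$\Rightarrow$(a), bearing in mind one convention: the step (c)$\Rightarrow$(a) will in fact only produce a \emph{normalisation} of $P$ (in the sense of replacing each $p_{\lambda i}$ by $u_\lambda p_{\lambda i}v_i$ for units $u_\lambda,v_i$, which does not change $S$ up to isomorphism) with entries in $\{0,1\}$; since we work with Rees matrix semigroups up to the normalisations of Theorem~\ref{gform}, this is exactly what (a) asserts.

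For (a)$\Rightarrow$(b) I would begin with the elementary remark that when $P$ has entries in $\{0,1\}$ the coordinates of a non-zero product $(i,g,\lambda)(j,h,\mu)=(i,gh,\mu)$ are \emph{separable}: the first coordinate is read off the left factor, the last off the right factor, and the middle is simply $gh$. Take a closed polygonal line $v_0,v_1,\ldots,v_{2n}=v_0$ in the Cayley table whose edges alternate between horizontal and vertical rook moves, and suppose every vertex-product is non-zero, with all but one equal to $m=(i_0,g_0,\mu_0)$ and the remaining one $m'$. Each row occurring on the line meets a non-exceptional vertex, so has first coordinate $i_0$; hence \emph{every} vertex-product, $m'$ included, has first coordinate $i_0$, and dually last coordinate $\mu_0$. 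Writing $\gamma$ and $\delta$ for the $G$-coordinates of the row- and column-elements used, the equalities between non-exceptional products sharing a row (resp.\ a column) say that consecutive $\delta$'s (resp.\ $\gamma$'s) along the line agree; chaining these around the line the long way, so as to skip the single exceptional vertex, forces all the $\gamma$'s equal and all the $\delta$'s equal, whence $m'$ also has middle coordinate $g_0$ and $m'=m$. So (b) holds.

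For (b)$\Rightarrow$(c) it is enough to bring $P$ into $\{0,1\}$-form, since then $T=\{(i,1,\lambda):i\in I,\ \lambda\in\Lambda\}\cup\{0\}$ is a subsemigroup with exactly one element over each pair $(i,\lambda)$. I would regard the non-zero positions of $P$ as the edges of a bipartite graph on $I\sqcup\Lambda$ --- essentially $\Gamma(\mathbb{H}_S)$ --- on which a normalisation acts as a gauge transformation at the vertices; its components partition $I$ and $\Lambda$, so one may treat each separately. Fix a spanning tree of a component and normalise so that every tree-edge entry becomes $1$ (possible since the tree is acyclic). Given a non-tree non-zero position $(\lambda,i)$, the fundamental cycle it closes is an alternating cycle of ``same-$\lambda$'' and ``same-$i$'' steps; realise it as a closed polygonal line in the Cayley table by representing a ``same-$\lambda$'' step by a single row-element and a ``same-$i$'' step by a single column-element, choosing all these row-elements with one fixed first coordinate, all the column-elements with one fixed last coordinate, and all $G$-coordinates trivial. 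Along this line every vertex-product is the same element $m$ except at the cell of $(\lambda,i)$, where the middle coordinate is $p_{\lambda i}$; by (b) this gives $p_{\lambda i}=1$, so after normalisation every entry lies in $\{0,1\}$.

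For (c)$\Rightarrow$(a), let $g_{i\lambda}$ be the unique element of $G$ with $(i,g_{i\lambda},\lambda)\in T$. Closure of $T$ forces $g_{i\lambda}\,p_{\lambda j}\,g_{j\mu}=g_{i\mu}$ whenever $p_{\lambda j}\ne0$. Fixing $\mu_0\in\Lambda$ and using regularity to pick, for each $\lambda$, some $j$ with $p_{\lambda j}\ne0$, one reads off that $f_\lambda:=g_{i\lambda}^{-1}g_{i\mu_0}$ is independent of $i$ and that $p_{\lambda j}=f_\lambda\,g_{j\mu_0}^{-1}$ at every non-zero position; normalising $P$ by $u_\lambda:=f_\lambda^{-1}$, $v_j:=g_{j\mu_0}$ turns every non-zero entry into $1$. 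I expect the genuine difficulty to be concentrated in (b)$\Rightarrow$(c): making the dictionary between fundamental cycles of the bipartite graph and closed polygonal lines in the Cayley table precise, and carrying out the bookkeeping that keeps \emph{all} the other vertex-products equal to one element $m$ while isolating $p_{\lambda i}$ in a single exceptional vertex (together with the routine-but-necessary reduction to connected components); the remaining implications are essentially formal once the separable-coordinates remark and the ``long way round'' chaining are in hand.
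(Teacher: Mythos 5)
The paper does not prove this statement at all: it is quoted verbatim from Lallement--Petrich \cite{lallement} and used as a black box, so the only comparison available is with the literature, and your argument is the standard one. Your proof is correct. The cycle (a)$\Rightarrow$(b)$\Rightarrow$(c)$\Rightarrow$(a), with the ``separable coordinates'' observation and the long-way-round chaining of the $G$-coordinates for (a)$\Rightarrow$(b), the spanning-tree normalisation plus realisation of each fundamental cycle of the bipartite graph of non-zero positions as a closed polygonal line isolating the single non-tree entry for (b)$\Rightarrow$(c), and the identity $g_{i\lambda}p_{\lambda j}g_{j\mu}=g_{i\mu}$ for (c)$\Rightarrow$(a), is exactly how this result is proved; regularity of $P$ is invoked precisely where it is needed (no isolated vertices of the bipartite graph, and the existence of some $j$ with $p_{\lambda j}\neq0$ in the last step). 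Two points deserve emphasis. First, your decision to read (a) ``up to normalisation'' is not an optional convention but is forced: conditions (b) and (c) are invariant under replacing $p_{\lambda i}$ by $u_\lambda p_{\lambda i}v_i$, whereas (a) as literally printed is not --- for $G=C_2=\{1,-1\}$, $|I|=|\Lambda|=1$ and $P=(-1)$, the singleton $T=\{(1,-1,1)\}$ is an idempotent subsemigroup meeting the unique set $\{(1,g,1):g\in G\}$ exactly once, so (c) holds while the literal (a) fails --- so the theorem as stated in the paper is only true under your reading, and it is worth saying so. Second, in (b)$\Rightarrow$(c) you normalise the tree edges to $1$ and then invoke (b); strictly you should either note that (b) transfers along the normalising isomorphism (it does, since that isomorphism permutes rows and columns of the Cayley table and hence preserves closed polygonal lines), or run the polygonal-line computation against the original matrix, absorbing the tree-edge values into the $G$-coordinates of the chosen row and column elements. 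Neither point is a gap; both are one-sentence repairs.
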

The subsemigroup $T$ mentioned in the last part of the theorem contains all the idempotents of $S$ and the condition implies, in particular, that the product of two idempotents is an idempotent. 

We have all the auxiliary results needed to start proving the following result,
stating the equivalence of (a) and (b) in our main theorem. 

 \begin{theorem}\label{cornerstone}
 Let $G\le S_n$ be a primitive group and let $t\in T_n$ be a rank $2$ transformation. Then $\langle G,t\rangle\setminus G$ is generated by its own idempotents if and only if its rank $2$ maps induce a connected Rees matrix semigroup.
 \end{theorem}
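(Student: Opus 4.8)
Write $S=\langle G,t\rangle\setminus G$; every element of $S$ has rank $1$ or $2$, so the first move is to concentrate on the rank-$2$ part. As in the discussion preceding the theorem, the rank-$2$ elements of $\langle G,t\rangle$, with the product $*$ of the $\mathcal J$-class $(T_2(X),*)$, form a Rees matrix semigroup $R=M^0[K;I,\Lambda;P']$: using the identity $\ker(g_0tg_1\cdots tg_m)=\ker(t)g_0^{-1}$ (valid whenever the rank stays equal to $2$) and the dual statement for images, one sees that $I$ is the $G$-orbit of $\ker(t)$, that $\Lambda$ is the $G$-orbit of $Xt$, that $P'$ is the restriction to $I\times\Lambda$ of the sandwich matrix of $(T_2(X),*)$, and that $K\le S_2$ is the structure group. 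As is standard, $\Gamma(\mathbb H_R)$ is connected exactly when the bipartite Houghton graph $H(G,2,\ker(t),Xt)$ is connected. Hence what must be proved is that $S$ is idempotent generated if and only if $R$ is connected.

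The reduction --- that $S$ is idempotent generated if and only if $R$ is --- is routine. If a product of idempotents of $S$ uses an idempotent of rank $\le 1$, the product has rank $\le 1$; so a rank-$2$ element of $S$ that is a product of idempotents is a product of rank-$2$ idempotents, and such a product agrees with the corresponding $*$-product in $R$. Thus $S$ idempotent generated gives $R$ idempotent generated. Conversely, if $R$ is idempotent generated, then a rank-$1$ element $r$ of $S$ is represented by a word $g_0tg_1\cdots tg_m$ with $m\ge 2$ (a word $g_0tg_1$ has rank $2$); splitting off the last factor and inducting on $m$, $r$ is a product of rank-$2$ elements of $S$, each of which is a product of rank-$2$ idempotents of $S$ by hypothesis, so $r$ --- hence every element of $S$ --- is a product of idempotents.

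Next I would apply Gray's theorem (Theorem~\ref{gray}) to $R$ in Graham normal form (Theorem~\ref{gform}): $R$ is idempotent generated if and only if $R$ is connected \emph{and} its structure group $K$ is generated by the nonzero entries of $P'$. One direction of the theorem now drops out: if $S$ (hence $R$) is idempotent generated then $R$ is connected, so the Houghton graph is connected. Concretely, a product $e_1\cdots e_\ell$ of rank-$2$ idempotents of $S$ that stays of rank $2$ traces a walk $\ker(e_1)\sim Xe_1\sim\ker(e_2)\sim\cdots\sim Xe_\ell$ in the Houghton graph; applying this to the rank-$2$ maps $g^{-1}t$ and $tg$, whose kernels $g\cdot\ker(t)$ run over all of $I$ and whose images $Xt\cdot g$ run over all of $\Lambda$, exhibits all vertices in one component.

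The substantive direction is the converse: assuming the Houghton graph connected, show that $K$ is generated by the entries of $P'$. Since $K\le S_2$, the only case requiring work is $K\cong C_2$, where one must produce a nonzero entry equal to the transposition. By Lallement's theorem (Theorem~\ref{lall}) this is equivalent to showing that the rank-$2$ idempotents of $S$, together with $0$, do \emph{not} form a subsemigroup, i.e.\ to exhibiting rank-$2$ idempotents $e_1,e_2$ of $S$ with $e_1e_2$ of rank $2$ but not idempotent. The hypothesis $K\cong C_2$ unpacks to: some $\mathcal H$-class of $R$ has two elements, equivalently there is a rank-$2$ map $w\in S$ in a group $\mathcal H$-class with $w\ne w^2$ --- that is, some $g\in G$ carries the image of a rank-$2$ element of $S$ onto a transversal of its kernel while interchanging the two kernel classes. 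Feeding such a $w$ (and its idempotent power $w^2$) into the elementary observation that, for rank-$2$ idempotents $e_1,e_2$ with $Xe_1$ a transversal for $\ker(e_2)$, the product $e_1e_2$ fails to be idempotent precisely when the two image points of $e_2$ lie in a single part of $\ker(e_1)$, and using connectivity to transport the relevant partitions and $2$-sets into position so that the idempotents one needs actually lie in $S$, should yield $e_1,e_2$. A more explicit alternative is to trace a closed walk in the Houghton graph whose holonomy in $S_2$ is the transposition --- an odd ``necklace'' of singleton-type partitions already does this for a cyclic group of prime degree. I expect this $K\cong C_2$ case --- ruling out the coexistence of connectivity with an identically trivial structure matrix --- to be the main obstacle; everything else is bookkeeping on top of the quoted theorems of Graham, Gray and Lallement.
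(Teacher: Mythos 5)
Your setup and reduction match the paper's almost exactly: you identify the rank-$2$ part of $\langle G,t\rangle\setminus G$ with a Rees matrix semigroup over a structure group $K\le S_2$ whose index sets are the orbits of $\ker(t)$ and $Xt$, you dispose of the rank-$1$ elements (the paper does this even more cheaply, noting that by synchronization they are constants and hence idempotent), and you invoke Graham's normal form, Gray's theorem and Lallement's theorem in precisely the roles the paper assigns them. The forward direction is then immediate in both treatments. So the architecture is right, and your reformulation of the remaining task --- exhibit two rank-$2$ idempotents of $S$ whose product has rank $2$ but is not idempotent, equivalently a closed polygonal line with non-trivial holonomy --- is a correct restatement of what must be shown.

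But that remaining task is the theorem, and you do not do it. The phrases ``using connectivity to transport the relevant partitions and $2$-sets into position \ldots should yield $e_1,e_2$'' and ``I expect this \ldots to be the main obstacle'' defer exactly the step where the paper does all of its work: the definition of $V(x)$ and the relations $\sim_1,\sim_2$, the argument (from primitivity and the $2$-Hc hypothesis) that one of them is non-trivial, the chain $b_1\sim c_1\sim\cdots\sim c_k\sim b_2$ obtained from connectedness of the graph $\{\{x_0,y_0\}g\mid g\in G\}$, and the explicit interleaved sequences $t_1,\ldots,t_{k+1}$ and $b,b_1,\ldots,b_k$ whose products form a closed polygonal line with all vertices equal except the last. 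Two specific holes in your sketch: (i) your ``elementary observation'' tells you \emph{when} a product of two idempotents fails to be idempotent, but you never show that idempotents with the required kernel/image configuration actually lie in $S$ --- producing them from the connectivity hypothesis is the whole point of the paper's gadget; and (ii) your fallback of an ``odd necklace'' is verified only for a cyclic group of prime degree, whereas the claim is needed for an arbitrary primitive group with connected Houghton graph. As it stands the proposal proves only the easy direction; the converse is an accurate plan, not a proof.
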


 Fix $t$, a rank $2$ map such that $A_1t=a_1$ and $A_2t=a_2$. Let $G\le S_n$ be a primitive group. The Rees matrix semigroup induced by $S$ is $$S'=M^0[H;\{\{A_1,A_2\}g\mid g\in G\}, \{\{a_1,a_2\}g\mid g\in G\};P],$$ with $H$ and $P$ still undefined. Regarding $H$, in general, it is the set of all maps in $S$ that have a given kernel and a given image (with the image being a transversal for the kernel). As there are only two rank $2$ maps  
with given image and kernel, it follows that $H$ either is the trivial group or $S_2$.  Now, the primitivity of $G$ implies that  there exist $g,h\in G$ such that $a_1g\in A_1$ and $a_2g\in A_2$, and $a_1h\in A_2$ and $a_2h\in A_1$. Therefore, $H$ has at least two elements and hence (by the discussion above) $H=S_2$, the symmetric group on two points. Regarding $P$ we ignore how it looks like, but  we can assume that it is in Graham's normal form as by Theorem \ref{gform}  every matrix $P$ of a  Rees matrix semigroup can be normalized. Note also that the connectedness of the semigroup $S'$  means that the graph $\Gamma(\mathbb{H}_{S'})$ is connected, and this is equivalent  (given the particular nature of $\Lambda$ and $I$ in $S'$) to saying that it is connected  the bipartite graph whose vertices is the union of $\{\ker(t)g\mid g\in G\}$ and $\{Xtg\mid g\in G\}$, and two vertices (a set $I$ and a partition $Q$) are connected if  $I$ is a transversal for $Q$.

If $S$ is idempotent generated, then  $S'$ is also idempotent generated and hence, by Theorem  \ref{gray}, $S'$ is a connected Rees matrix semigroup. 
 The direct implication of Theorem \ref{cornerstone} follows.

Regarding the converse, let $t\in T_n$ be a rank $2$ transformation and let $G\le S_n$ be a  primitive group such that $H(G,2,\ker(t),Xt)$ is connected.  
It is known that $G$ synchronizes every rank $2$ map and hence the semigroup $\langle G,t\rangle\setminus G$ will have some maps of rank $2$ and all the constants. It is obvious that the constants are idempotent. Thus $S:=\langle G,t\rangle\setminus G$ will be idempotent generated if and only if every rank $2$ map in $S$ can be written as a product of idempotents of $S$; by Theorem \ref{gray}, it is enough to prove that in $S'$ the entries of $P$ generate $S_2$.   The matrix $P$ fails  to generate $S_2$ only if all its entries are $0$ and $1$. To prove that this does not happen, by Theorem \ref{lall}, we only need to prove that there exists one closed $l$-polygonal line in $S$ in which all vertices are non-zero,  and such that $(l-1)$-vertices have value $m$, while the remaining vertice has a different value.  This is what we prove now using the following \emph{gadget}.

Let $x\in X$, and recall that $Xt=\{a_1,a_2\}$ and $A_1t=\{a_1\}$, $A_2t=\{a_2\}$. 
\[
V(x):=\{y\in X \mid (\exists g\in G)\{a_1,a_2\}g=\{x,y\}\}. 
\]
Now we define the following relation: for all $x,y\in X$,
 \[
 x\sim_1 y \Leftrightarrow (\exists g \in G) \ x,y \in A_1g \ \&\ V(x)\cap A_2g\neq \emptyset \neq V(y)\cap A_2 g.
 \]
The notation $Y\perp \pi$ means that the pair $Y=(y_1,y_2)$ is a transversal for the ordered partition $\pi=(P_1,P_2)$, with $y_i\in P_i$. With this notation,  $x$ and $y$ are $\sim_1$-related if there exist $x_1,y_1\in X$ such that $\{x,x_1\},\{y,y_1\}\in \{a_1,a_2\}G$, and  $(x,x_1)\perp (A_1,A_2)g \perp (y,y_1)$, for some $g\in G$. The elements $x_1$ and $y_1$ that are linked through $\sim$ to $x$ and $y$ (respectively) will be denoted by $\overline{x}$ and $\overline{y}$. 

Similarly we define  
\[
 x\sim_2 y \Leftrightarrow (\exists g \in G) \ x,y \in A_2g \ \&\ V(x)\cap A_1g\neq \emptyset \neq V(y)\cap A_1 g.
 \]

As $G$ is $2$-Hc, either $\sim_1$ or $\sim_2$ is non-trivial, say it is $\sim_1$. From now on it will be just denoted by  $\sim$, and $x_0, y_0$, are two different $\sim$-related elements. Observe that a primitive $G$ has the strong $2$-ut property, for all partition of type $(n-1,1)$ and hence, by the general results above, $\langle G,t\rangle\setminus G$ is idempotent generated so that we only have to care about partitions of type $(n,m)$, with both $m,n>1$. 

It is clear that $x\sim y \Rightarrow xg\sim yg$, for all $g\in G$, and hence $\sim$ is a $G$-relation.  The primitivity of $G$ guarantees that $\{\{x_0,y_0\}g\mid g\in G\}$ is a connected graph.  

Since $\sim$ is non-trivial, it follows that in  $\langle G,t\rangle$ there exists a map 
\[
a=\left(\begin{array}{cc}
A_1&A_2\\
\alpha_1&\alpha_2
\end{array}\right)
\]such that $\alpha_1\in A_1$ and $\alpha_2\in A_2$, with $(\alpha_1,\alpha_2)g=(a_1,a_2)$, for some $g\in G$, and there exists also $\{b_1,b_2\}\in \{a_1,a_2\}G$ such that $(b_1,b_2)\perp (A_1,A_2)$; thus $\alpha_1\sim b_1$. By the primitivity of $G$, it follows that there exist elements $c_1,\ldots,c_k\in X$ such that 
\[
b_1\sim c_1\sim c_2\sim \ldots \sim c_k\sim b_2.
\]
Recall that given a partition $\pi=(P_1,P_2)$ and $x\in P_i$, the notation $[x]_P$ means the part of $P$ containing $x$, that is, $P_i$. 

We observe that if there exists in $\langle G,t\rangle\setminus G$ a map $b$ such that $P_1b=\{b_1\}$ and $P_2b=\{b_2\}$, then there exists a map $b'\in \langle G,t\rangle\setminus G$ such that $P_1b'=\{b_2\}$ and $P_2b'=\{b_1\}$; this is a consequence of the primitivity of $G$. The second observation is that if $x\sim y$, then there are permutations $g_1,g_2,h\in G$ such that $\{x,\overline{x}\}=\{a_1,a_2\}g_1$, $\{y,\overline{y}\}=\{a_1,a_2\}g_2$, and $(x,\overline{x})\perp ( A_1,A_2)h\perp (y,\overline{y})$. This implies that the maps 
\[\begin{array}{ccc}
\left(\begin{array}{cc}
[x]_{Ag}&[\overline{x}]_{Ag}\\
x&\overline{x}
\end{array}\right)
&\mbox{ and }&
\left(\begin{array}{cc}
[y]_{Ag}&[\overline{y}]_{Ag}\\
y&\overline{y}
\end{array}\right)
\end{array}
\] both belong to  $\langle G,t\rangle\setminus G$ (where $A=(A_1,A_2)$). We use these two observations and the sequence of $\sim$-related elements introduced above to define a sequence of rank $2$ maps in $\langle G,t\rangle$:
\[\begin{array}{cc}
t_1=\left(\begin{array}{cc}
[b_1,c_1]_{T_1}&[b_2,\overline{c_1}]_{T_2}\\
x_1&x_2
\end{array}\right)
&
t_2=\left(\begin{array}{cc}
[c_1,c_2]_{T_2}&[\overline{c_1},\overline{c_2}]_{T_2}\\
x_1&x_2
\end{array}\right)\\
\\
\ldots\\
\\
t_k=\left(\begin{array}{cc}
[c_{k-1},c_k]_{T_k}&[\overline{c_{k-1}},\overline{c_k}]_{T_k}\\
x_1&x_2
\end{array}\right)
&
t_{k+1}=\left(\begin{array}{cc}
[c_k,b_2]_{T_{k+1}}&[\overline{c_k},b_1]_{T_{k+1}}\\
x_1&x_2
\end{array}\right),
\end{array}
\] and yet another sequence of maps: 
\[\begin{array}{cc}
b=\left(\begin{array}{cc}
A_1&A_2\\
b_1&b_2
\end{array}\right)
&
b_1=\left(\begin{array}{cc}
A_1&A_2\\
c_1&\overline{c_1}
\end{array}\right)\\
\\
\ldots\\
\\
b_{k-1}=\left(\begin{array}{cc}
A_1&A_2\\
c_{k-1}&\overline{c_{k-1}}
\end{array}\right)
&
b_{k}=\left(\begin{array}{cc}
A_1&A_2\\
c_{k}&\overline{c_{k}}
\end{array}\right).
\end{array}
\]

It is clear that the sequence
\[
bt_1,b_1t_1,b_1t_2,b_2t_2,\ldots,b_kt_{k+1},b_1t_{k+1}
\]is a closed polygonal line whose vertices all evaluate to 
\[
\left(\begin{array}{cc}
A_1&A_2\\
x_1&x_2
\end{array}\right)
\] except the last one that yields
\[
\left(\begin{array}{cc}
A_1&A_2\\
x_2&x_1
\end{array}\right).
\]
 
 Therefore, the semigroup $S'$ admits a closed polygonal line failing the equivalent conditions of Theorem \ref{lall} so that the matrix $P$ in $S'$ has two non-zero entries, and hence the entries in $P$  generate $S_2$. Theorem \ref{cornerstone} is proved.

\section{From $2$-Hc to road closures}\label{roadclos}

We say that a
permutation group $G$ has the $2$-Hc property if every Houghton graph
$H(G,2,S,P)$, where $S$ is a $2$-set and $P$ a $2$-partition of the domain,
is connected. (Recall that this graph has vertex set $SG\cup PG$, and an
edge from $S'$ to $P'$ if $S'$ is a transversal for $P'$.)
Since these graphs can be exponentially large (the number of 
$2$-partitions is $2^{n-1}$), we translate the property into one which can be checked
by looking at the orbital graphs for $G$.

\begin{theorem}
Let $G$ be a finite transitive permutation group on $\Omega$. The following
two conditions are equivalent:
\begin{enumerate}\itemsep0pt
\item $G$ has the $2$-Hc property;
\item for every orbit $O$ of $G$ on $2$-sets of $\Omega$, and every maximal
block of imprimitivity $B$ for $G$ acting on $O$, the graph with vertex set
$\Omega$ and edge set $O\setminus B$ is connected.
\end{enumerate}
\end{theorem}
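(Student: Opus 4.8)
The plan is to translate both conditions into statements about a single orbit $O$ of $G$ on $2$-sets, viewed as an orbital graph, letting blocks of $G$ acting on $O$ and connected components of Houghton graphs communicate through one auxiliary relation on the orbit of $2$-partitions. So, fix an orbit $O$ of $G$ on $2$-sets and write $\Gamma_O=(\Omega,O)$ for the corresponding (orbital) graph, on whose edge set $G$ acts transitively. For a $2$-partition $Q=\{Q_1,Q_2\}$ let $T_O(Q)\subseteq O$ be the set of edges of $\Gamma_O$ that cross the cut, so that $T_O(Qg)=T_O(Q)g$, and $T_O(Q)=\emptyset$ exactly when $Q_1,Q_2$ are unions of connected components of $\Gamma_O$. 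For $S$ with $SG=O$, the graph $H(G,2,S,P)$ is the bipartite graph on $O\cup PG$ joining $e$ to $Q$ when $e\in T_O(Q)$. I would first record the elementary observation that $H(G,2,S,P)$ is connected if and only if (i) every edge of $O$ lies in $T_O(Q)$ for some $Q\in PG$, and (ii) the graph on $PG$ with $Q\sim Q'$ whenever $T_O(Q)\cap T_O(Q')\neq\emptyset$ is connected; that (i) holds automatically once $\Gamma_O$ is connected, because the $G$-translates of any single non-empty $T_O(Q)$ already exhaust $O$; and that, conversely, when $\Gamma_O$ is disconnected one obtains a disconnected Houghton graph for free by taking $P=\{C,\Omega\setminus C\}$ with $C$ a component of $\Gamma_O$ (then $T_O(Q)=\emptyset$ for every $Q\in PG$).

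For (a)$\Rightarrow$(b) I would argue by contraposition. Assume some maximal block $B$ of $G$ on some orbit $O$ has $(\Omega,O\setminus B)$ disconnected, say with separation $\Omega=X\sqcup Y$; then every edge of $O$ joining $X$ to $Y$ must lie in $B$. If such an edge exists, set $P=\{X,Y\}$, so $\emptyset\neq T_O(P)\subseteq B$; then for each $Q\in PG$ the non-empty set $T_O(Q)$ sits inside a single block of the block system $\{Bg:g\in G\}$, and the map sending $Q$ to that block is constant on $\sim$-classes and surjective onto a set of size $\ge2$ (since $B$ is proper), whence $\sim$ is disconnected and so is $H(G,2,S,P)$. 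If no such edge exists then $\Gamma_O$ is itself disconnected and the last remark of the previous paragraph applies. Either way the $2$-Hc property fails.

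For (b)$\Rightarrow$(a), again contrapositively, suppose $H(G,2,S,P)$ is disconnected and put $O=SG$. If $\Gamma_O$ is disconnected, then the set of edges lying inside a component of $\Gamma_O$ is a $G$-block system on $O$; I would pass up the block lattice to a maximal block $B$ of $G$ on $O$ that is a union of such edge-sets, and then $(\Omega,O\setminus B)$ is disconnected because no edge of $O\setminus B$ leaves the union of the deleted components, so (b) fails. If $\Gamma_O$ is connected, then (i) holds, so disconnectedness of $H$ forces the relation $\sim$ on $PG$ to be disconnected; its classes form a $G$-block system on $PG$, and pushing forward by $Q\mapsto T_O(Q)$ gives a $G$-block system $\{B_1,\dots,B_t\}$ on $O$ with $t\ge2$ (the $B_j$ are non-empty, pairwise disjoint since a common edge would merge two classes, and cover $O$ by transitivity on $O$). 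Now any $Q=\{Q_1,Q_2\}$ in the class defining $B_1$ has $T_O(Q)\subseteq B_1$, so $(\Omega,O\setminus B_1)$ has no $Q_1$--$Q_2$ edge and is disconnected; enlarging $B_1$ to a maximal block $B$ and noting $O\setminus B\subseteq O\setminus B_1$ keeps it disconnected, so (b) fails.

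The point needing the most care is the second half of the last step: one must verify that the $\sim$-classes on $PG$, and the edge-sets $T_O(Q)$ they determine in $O$, genuinely assemble into honest $G$-block systems (this is exactly where transitivity of $G$ on $O$ and on $PG$ is used), and one must bridge from an arbitrary proper block to a maximal one, which is handled by the monotonicity remark that deleting fewer edges cannot reconnect a disconnected graph. It is worth stressing that primitivity of $G$ is never assumed anywhere; it emerges from the argument, since a disconnected orbital graph defeats both conditions simultaneously.
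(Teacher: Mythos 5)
Your proof is correct and follows essentially the same route as the paper's: both reduce connectivity of the bipartite Houghton graph to a one-sided ``two-step'' graph whose connected components form a block system on a $G$-orbit, and conversely translate a disconnecting block into a $2$-partition all of whose sections lie in a single block. The only differences are bookkeeping --- you collapse onto the partition side $PG$ and push forward to $O$ via the crossing-edge sets $T_O(Q)$, where the paper collapses onto the $2$-set side $SG$ --- and that you treat explicitly the degenerate case of a disconnected orbital graph and the passage from a proper block to a maximal one, both of which the paper leaves implicit.
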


\paragraph{Remark} We call condition (b) the \emph{road closure condition}:
orbital graphs for primitive groups are connected, and the condition asserts
that the graph cannot be disconnected by deleting a block of imprimitivity
for the action of~$G$. In other words, thinking of the orbital graph as a
connected road network, it cannot be disconnected by closing the roads in a
block of imprimitivity. As a simple example of a primitive graph for which
this property fails, consider the square grid graph (Figure~\ref{f:grid}).
(Two points in the same row or column are joined: the automorphism group is
the non-basic primitive group $G=S_m\wr S_2$.) The action of $G$ on edges
has two blocks of imprimitivity, the horizontal edges and the vertical
edges: removing one block leaves a graph with $m$ components.

\begin{figure}[htbp]
\begin{center}
\setlength{\unitlength}{1mm}
\begin{picture}(40,40)
\multiput(0,0)(10,0){5}{\circle*{1}}
\multiput(0,10)(10,0){5}{\circle*{1}}
\multiput(0,20)(10,0){5}{\circle*{1}}
\multiput(0,30)(10,0){5}{\circle*{1}}
\multiput(0,40)(10,0){5}{\circle*{1}}
\multiput(0,0)(10,0){5}{\line(0,1){40}}
\multiput(0,0)(0,10){5}{\line(1,0){40}}
\end{picture}
\end{center}
\caption{\label{f:grid}A grid fails the road closure condition}
\end{figure}
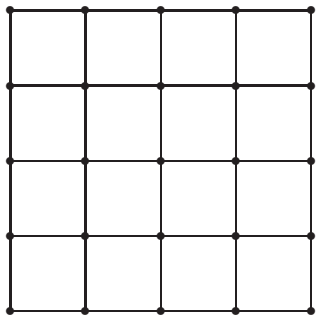

\begin{proof} 
Connectedness of $H(G,2,S,P)$ is equivalent to connectedness of the graph
with vertex set $SG$, having an edge from $S'$ to $S''$ whenever there is a
partition $P'\in PG$ for which both $S'$ and $S''$ are sections. We call this
the \emph{$2$-step Houghton graph}. This holds since every partition $P'$ is
joined to a subset $S'$.

Suppose that the $2$-Hc condition fails, and let $S$ and $P$ be a subset and
partition witnessing the failure. The edge set of a connected component of
the $2$-step Houghton graph is a block of imprimitivity $B$ for $G$ acting
on $SG$, since $G$ must permute the connected components among themselves.
Then, with $O=SG$, we see that $O\setminus B$ must have the property that no
edge is a section for $P$, and so this set is the edge set of a disconnected
graph (the parts of $P$ are unions of connected components).

Conversely, suppose that there is a $2$-set $S$ and a block $B$ for $G$ acting
on $O=SG$ such that the graph $(\Omega,O\setminus B)$ is disconnected. Let
$P$ be a $2$-partition, one of whose parts is a connected component for this
graph. Then every pair in $SG$ which is a section for $P$ must belong to $B$.
Hence all the edges of the $2$-step Houghton graph $H(G,2,S,P)$ are contained in
translates of $B$, and the graph is disconnected.

So the $2$-Hc property is equivalent to the road closure property.\qed
\end{proof}

A number of corollaries follow easily from this theorem. We begin with negative
results. 
The group $\pom^+(8,q)$ acts on a polar space which contains equal
numbers of points and of ``solids'' ($3$-dimensional projective spaces) in
each of two families; these are permuted transitively by the ``triality''
group of outer automorphisms, which induces $S_3$ on the three types of
object. The action of $\pom^+(8,q):S_3$ on triples of mutually incident
objects consisting of a point and a solid from each family is primitive
(these are examples $P_2$ in \cite[Table III]{kleid}, see also \cite{bhrd}).\label{trial}

\begin{theorem}\label{nonex}
\begin{enumerate}\itemsep0pt
\item A transitive imprimitive group fails the road closure property.
\item A primitive non-basic group fails the road closure property.
\item A primitive group which has an imprimitive normal subgroup of index~$2$
fails the road closure property.
\item The primitive action of $\pom^+(8,q):S_3$ described above
fails the road closure property.
\end{enumerate}
\end{theorem}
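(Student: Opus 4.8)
The plan is to prove each of the four parts by exhibiting, in each case, an orbit $O$ of $G$ on $2$-sets together with a block of imprimitivity $B$ for $G$ acting on $O$ whose removal disconnects the graph $(\Omega, O\setminus B)$; by Theorem~\ref{roadc} (equivalently, the theorem just proved in Section~\ref{roadclos}) this is exactly a failure of the road closure property.

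For part~(a), suppose $G$ preserves a partition $\Pi$ of $\Omega$ into blocks of size $b>1$ (with more than one block). Take $O$ to be the orbit of $2$-sets contained in a single block of $\Pi$ (nonempty since $b\ge 2$), and let $B$ be the subset of $O$ consisting of pairs inside one fixed block $\Delta\in\Pi$. Since $G$ permutes the blocks of $\Pi$, $B$ is a block of imprimitivity for $G$ acting on $O$. But then $O\setminus B$ is a disjoint union of cliques, one on each block other than $\Delta$, together with the isolated block $\Delta$, so it is disconnected. For part~(b), a primitive non-basic group preserves a Cartesian structure $\Omega = \Gamma^m$ with $m\ge 2$; here the Hamming graph provides the counterexample exactly as the grid does in Figure~\ref{f:grid}. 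Take $O$ to be the orbit of pairs differing in exactly one coordinate; then ``differing in coordinate $i$'' partitions $O$ into $m$ blocks permuted by $G$ (which induces $S_m$ on the coordinates), and removing one such block leaves $|\Gamma|^{m-1}$ components (the fibres over the remaining coordinates), so the graph is disconnected.

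For part~(c), let $N\trianglelefteq G$ with $[G:N]=2$ and $N$ imprimitive, preserving a partition $\Pi$ of $\Omega$. The element of $G\setminus N$ moves $\Pi$ to another $N$-invariant partition $\Pi'$, and $G$ permutes the two partitions $\{\Pi,\Pi'\}$. Take $O$ to be the $G$-orbit of a $2$-set $\{x,y\}$ with $x,y$ in a common block of $\Pi$ \emph{and} in a common block of $\Pi'$ — one must argue such a pair exists, or else work with the orbit of pairs lying in a common $\Pi$-block and track how $G$ acts on it. The natural block $B$ to remove is the set of pairs lying in one fixed block of $\Pi$ or one fixed block of $\Pi'$ (this set is a block for $G$ because $G$ permutes $\{\Pi,\Pi'\}$ and $N$ fixes each), and one checks that $O\setminus B$ disconnects $\Omega$. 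The delicate point is matching up the two congruences and verifying that $B$ really is a single block for the full group $G$ rather than just for $N$; I expect this to require a short but careful combinatorial argument about how the outer element interchanges the systems.

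Part~(d) is the hardest and I expect it to be the main obstacle. Here we have the primitive action of $\pom^+(8,q):S_3$ on triples consisting of a point and a solid from each family of the $D_4$ polar space, with triality inducing $S_3$ on the three object types. The strategy is the same in principle: identify an orbital graph on this domain and a block of imprimitivity whose deletion disconnects it. The natural candidate for the block structure comes from the three types of object: a suitable orbital $O$ should come in three ``flavours'' according to which of the three coordinates (point / solid-family-1 / solid-family-2) is ``responsible'' for the adjacency, and $S_3$ from triality permutes these three flavours, so each flavour is a block $B$. The real work is (i) choosing the correct $G$-orbit $O$ on $2$-subsets of the domain so that it genuinely splits into three triality-conjugate pieces, which requires understanding the incidence geometry and the $N$-orbits on pairs of flats, and (ii) showing that deleting one piece disconnects $\Omega$ — this amounts to a connectivity computation inside the polar space, presumably reducible to the fact that fixing (say) the point-coordinate of the triple forces the pair of triples to lie in a proper substructure. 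I would lean on \cite{kleid} and \cite{bhrd} for the orbit data, and on standard facts about the $D_4$ geometry and triality, rather than attempting an ab initio analysis.
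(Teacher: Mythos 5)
Parts (a) and (b) of your proposal are correct and match the paper's argument (in (a) the paper observes more simply that the orbital graph $(\Omega,SG)$ is \emph{already} disconnected; in (b) your component count is off --- deleting the coordinate-$i$ block leaves $|\Gamma|$ components, the level sets of coordinate $i$, not $|\Gamma|^{m-1}$ --- but the conclusion stands). The genuine gap is in part (c). Your proposed orbit, pairs lying in a common block of $\Pi$ \emph{and} of $\Pi'$, can be empty: in the motivating example $\pgl(2,11)$ of degree $55$, blocks of the two $\psl(2,11)$-systems meet in a single point. Worse, your candidate $B$ (pairs inside one fixed block of $\Pi$ together with pairs inside one fixed block of $\Pi'$) is in general not a block of imprimitivity: nothing forces $Bh=B$ or $Bh\cap B=\emptyset$ for $h\in N$ moving the chosen $\Pi$-block while stabilising the chosen $\Pi'$-block. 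The paper's construction sidesteps the ``matching of congruences'' entirely: take $S=\{x,y\}$ inside one block of the $N$-system $\Pi$ and let $B=SN$. Since $N\trianglelefteq G$ has index $2$, we have $SG=SN\cup SNg$ and $G$ permutes the two $N$-orbits, so $SN$ is automatically a block (maximal, and proper by primitivity of $G$); moreover $SG\setminus SN=SNg$ consists of pairs inside blocks of the translated system $\Pi g$, hence is the edge set of a disconnected graph.

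For (d) you have the right skeleton but leave both key steps unexecuted, and you overestimate the difficulty of the disconnection step. Take $S=\{t,t'\}$ with $t,t'$ two incident triples agreeing in exactly two of the three positions. Then $SG$ splits into three orbits of $H=\pom^+(8,q)$, indexed by the position of disagreement and permuted by the triality $S_3$; these are blocks of imprimitivity. Deleting one of them, say the position-$1$ block, leaves only edges joining triples that \emph{agree} in the first position, so every connected component has constant first coordinate and the graph is disconnected --- no connectivity computation inside the polar space is needed. The only geometric input is the existence of such pairs and the transitivity of $H$ on each disagreement class, for which one cites \cite{kleid} and \cite{bhrd}; as written, your proposal defers exactly the part that must be made precise while anticipating work (``a connectivity computation inside the polar space'') that the argument does not actually require.
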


\begin{proof} (a) If $S$ is a $2$-subset of a block of imprimitivity,
then the graph with edge set $SG$ is disconnected.

\smallskip

(b) Suppose that $G$ is primitive but non-basic; then $\Omega$ can be identified
with the set $Q^d$ of all $d$-tuples over an alphabet $Q$ of size $q>2$,
and $G$ is contained in the wreath product $\mathrm{Sym}(Q)\wr S_d$, where
the group permuting the coordinates is transitive.

Consider a pair $S$ of points which agree in all but one coordinate. The
images of $S$ under $G$ contain, for each coordinate, a pair of vertices which
differ only in that coordinate. Now, for each fixed coordinate, the pairs
differing in that coordinate form a block of imprimitivity $B$; and
the graph with edge set $SG\setminus B$ is disconnected, since all vertices
in a connected component have the same entry in the chosen coordinate.

\smallskip

(c) Suppose that the primitive group $G$ has an imprimitive normal subgroup
$H$ of index~$2$. Let $B$ be a block for $H$ containing a point $x$, and
choose $y$ in $B$; now put $S=\{x,y\}$, and let $P$ be the partition 
$(B,X\setminus B)$. Now all the images of $S$ under $H$ are subsets of blocks
in the block system containing $B$, so $SH$ is the edge set of a disconnected
graph; and $SG=SH\cup SHg$ for $g\in G\setminus H$, so $SH$ is a block for
$G$ acting on $SG$, and $SG\setminus SH$ is disconnected. Thus the road
closure property fails.

\smallskip

(d) Let $G=\pom^+(8,q):S_3$, and $H=\pom^+(8,q)$, a normal subgroup of 
index~$6$ in $G$ with quotient group $S_3$.

Let $t=(p,\sigma,\sigma')$ be a triple belonging to the set on which $G$ acts,
and let $t'$ be another triple having two elements in common with $t$.
Let $S=\{t,t'\}$. Then $SG$ falls into three orbits under $H$, which are
blocks of imprimitivity for $G$ in its action on $SG$: each orbit is determined
by one of the three positions in the triple where its elements disagree. 

Consider the graph whose edge set is the union of two of these three blocks,
say those corresponding to disagreement in the second and third positions.
Then any edge joins triples which agree in the first position, so the
entire connected component consists of triples which agree in the first
position. So the graph with this union of blocks as edges is disconnected.\qed
\end{proof}

\paragraph{Remarks} (a) There are several examples of primitive groups 
satisfying the conditions of (c) of Proposition~\ref{nonex}. Such a group is
contained in the automorphism group of an incidence structure of points and
blocks, acting on the set of flags (incident point-block pairs). To see this,
we may choose $B$ to be a minimal block of imprimitivity for $H$; then, if
$x\in B$ and $g\in Gx\setminus H_x$, $Bg$ is another block of imprimitivity
for $H$, and so $B\cap Bg=\{x\}$. Now we construct the incidence structure as
follows: its ``points'' are the $H$-translates of $B$, and the ``blocks'' the
$H$-translates of $Bg$, a ``point'' and ``block'' being incident if their
intersection is non-empty. We see that there is a bijection between the domain
of $G$ and the set of flags of the incidence structure.

Examples include
\begin{enumerate}\itemsep=0pt
\item points and hyperplanes of a finite projective space, where incidence is
inclusion (so that $G$ acts on the set of point-hyperplane flags);
\item more generally, the $i$-spaces and $n-i-1$-spaces in $n$-dimensional
projective space over a finite field, where incidence is inclusion;
\item points and hyperplanes of a finite projective space, where incidence
is non-inclusion;
\item points and lines of a self-dual generalized quadrangle (the symplectic
quadrangle over a field of characteristic~$2$);
\item points and lines of a self-dual generalised hexagon (associated with
the group $G_2(q)$, where $q$ is a power of~$3$);
\item points and blocks of a suitable symmetric design such as the
$(11,5,2)$ or $(11,6,3)$ designs (these give examples of degrees
$55$ and $66$, with $G=\pgl(2,11)$), or the $(176,126,90)$ design
associated with the Higman--Sims group).
\end{enumerate}

\smallskip

Another class of examples, extending the example above of $\pgl(2,11)$
with degree~$55$, is given by the following construction.

Let $p$ be a prime congruent to $\pm1$ (mod~$5$) and to $\pm3$ (mod~$8$).
From the list of subgroups of $\pgl(2,p)$ (e.g.\ in Dickson~\cite{dickson}
or Huppert~\cite{huppert}),
we see that $\pgl(2,p)$ contains one conjugacy class of subgroups isomorphic
to $A_5$, splitting into two classes in $\psl(2,p)$. Now an $A_4$ inside one
of these $A_5$s is normalised by $S_4$ in $\pgl(2,p$); elements of
$S_4\setminus A_4$ thus conjugate $A_5$ into another $A_5$ (in the other
class in $\psl(2,p)$) intersecting it in $A_4$.

So $\pgl(2,p)$ acts primitively on the cosets of $S_4$, but the subgroup of
index $2$ (namely $\psl(2,p)$) is imprimitive, since the stabiliser $A_4$ is
contained in two $A_5$s (one in each class) -- it has two systems of blocks
of size $5$.

This gives an action of $\pgl(2,p)$ of degree $p(p^2-1)/24$.

\medskip

We come now to our main conjecture, which asserts that the converse of this
theorem is true.

\begin{conj}
A primitive basic permutation group which does not satisfy condition (c) or (d)
of Theorem~\ref{nonex1} has the road closure property.
\end{conj}

Computation shows that the conjecture is true for groups with degree at most
$130$, as we will discuss later. We tried to prove this conjecture, and proposed it to some world class experts in permutation groups, but after many attempts by several people, all of us formed the conviction that this is an extremely difficult problem.

\section{Some positive results}\label{pos}

Next we give some examples to show that many ``typical'' primitive
groups do have the road closure property.

\begin{theorem}
\begin{enumerate}\itemsep0pt
\item A $2$-homogeneous group has the road closure property.
\item A transitive permutation group of prime degree has the road closure
property.
\item A primitive permutation group of degree the square of a prime has the
road closure property if and only if it is basic.
\item The symmetric or alternating group of degree $m$, acting on the set of
$k$-element subsets of $\{1,\ldots,m\}$ (with $m>2k$), has the road closure
property.
\end{enumerate}
\end{theorem}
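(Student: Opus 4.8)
The plan is to treat the four statements separately, in each case using the road closure criterion from Theorem~\ref{nonex1}'s converse-free direction together with the reformulation in Theorem~\ref{roadc}: a transitive group $G$ fails the road closure property precisely when some orbital graph $(\Omega, O)$ can be disconnected by deleting the edges lying in a single maximal block of imprimitivity $B$ for the $G$-action on $O$.

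For (a), if $G$ is $2$-homogeneous then it is transitive on unordered pairs, so there is only one orbit $O$ of $2$-sets, and the action of $G$ on $O$ is transitive; a transitive action has no proper maximal block system unless it is imprimitive, but even then the key point is cleaner: the $2$-homogeneity means $G$ acts primitively enough that I claim $G$ acting on $O$ has no block system whose removal disconnects $K_n$. In fact I would argue directly: any $B\subsetneq O$ that is a block has $|B|<|O|$, and since $G$ is transitive on pairs, the complement $O\setminus B$ still contains, for every vertex $v$, many pairs through $v$ (a block cannot contain all pairs through a fixed point, else by transitivity on pairs it would contain all pairs). A short counting/connectivity argument then shows $(\Omega, O\setminus B)$ is connected. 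For (b), prime degree $p$: here I invoke Burnside's theorem that a transitive group of prime degree is either $2$-transitive or a subgroup of $\agl(1,p)$ (Frobenius of order $p\cdot d$ with $d\mid p-1$). The $2$-transitive case is covered by (a). In the Frobenius case the orbital graphs are circulant (Cayley graphs on $C_p$), and one checks that a block of imprimitivity for $G$ on an orbital $O$ corresponds to a multiplicative coset structure; because $p$ is prime, removing such a block still leaves a connected circulant — this is where the primality of $p$ does the work, via the fact that $C_p$ has no proper nontrivial subgroup, so a connection set cannot be ``killed'' without emptying it.

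For (c), degree $p^2$: the ``only if'' direction is immediate from Theorem~\ref{nonex1}(b) (a non-basic primitive group fails road closure). For ``if'', a basic primitive group of degree $p^2$ is, by the O'Nan--Scott theorem, affine (the diagonal and almost simple cases do not occur in degree $p^2$ with the basic hypothesis; the affine ones have socle $C_p^2$). So $G\le \agl(2,p)$ with $G$ acting irreducibly on $C_p^2$ (irreducibility being exactly basicness here). Then an orbital graph is a Cayley graph on $C_p^2$ with connection set $C$ a union of $G_0$-orbits, and a maximal block of imprimitivity on an orbital is governed by the subgroup structure: the only proper nontrivial subgroups of $C_p^2$ are the lines, and irreducibility of $G_0$ forbids any single line from absorbing a whole orbital in a $G$-invariant way. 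I would make this precise by showing that if deleting a block $B$ disconnects the Cayley graph, the components would form a block system for $G$ on $\Omega = C_p^2$, i.e.\ a $G_0$-invariant subgroup, contradicting irreducibility — so $G$ basic forces road closure. Finally (d), $G = S_m$ or $A_m$ on $k$-subsets with $m>2k$: the orbital graphs here are the Johnson-scheme graphs $J(m,k,i)$ (two $k$-sets adjacent iff they meet in $k-i$ points), $i=1,\dots,k$; the relevant one is the Johnson graph itself, $i=1$. The action of $S_m$ on each orbital is again $2$-transitive-ish on edges; the real content is to identify the maximal blocks of imprimitivity of $S_m$ (or $A_m$) on the set of edges of each $J(m,k,i)$ and check none disconnects. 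For $J(m,k,1)$ an edge is an ``almost-$(k{+}1)$-set'' $\{x\cup\{a\},\,x\cup\{b\}\}$ recorded by the data $(x, \{a,b\})$ with $|x|=k-1$; one enumerates the $S_m$-invariant equivalence relations on such pairs and verifies that removing any block class still lets any two $k$-sets be joined by a path (e.g.\ because one can route through $k$-sets avoiding the forbidden configuration, using $m>2k$ to have enough room).

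The main obstacle I expect is part (d): correctly enumerating \emph{all} maximal block systems of $S_m$ (and especially $A_m$, where extra blocks can appear) on the edge sets of the various orbital graphs $J(m,k,i)$, and then producing, for each such block $B$, an explicit connectivity argument for $(\Omega, O\setminus B)$. The condition $m>2k$ will be used repeatedly to guarantee enough ``spare'' points to reroute paths; getting a uniform argument covering all $i$ and both $S_m$ and $A_m$, rather than a case explosion, is the delicate point. Parts (a)--(c) I expect to be comparatively short once the Cayley-graph/block correspondence is set up, with (b) and (c) reducing essentially to the absence of proper subgroups of $C_p$ (resp.\ proper $G_0$-invariant subgroups of $C_p^2$).
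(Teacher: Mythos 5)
Your overall framework (reduce everything to the road closure criterion and analyse blocks orbital by orbital) matches the paper's, but part (c) contains a genuine error that invalidates your argument there. You propose to show that if deleting a block $B$ disconnects the orbital graph $(\Omega,O)$, then "the components would form a block system for $G$ on $\Omega$," contradicting irreducibility. But $G$ does \emph{not} preserve $O\setminus B$: it permutes the translates of $B$ among themselves, so only the setwise stabiliser $G_{\{B\}}$ of the block preserves $O\setminus B$, and hence only $G_{\{B\}}$ permutes the components. A maximal subgroup $G_{\{B\}}$ of a primitive group can perfectly well act imprimitively (or intransitively) on $\Omega$ --- this is exactly the mechanism behind Theorem~\ref{nonex1}(c) and the examples such as $\pgl(2,11)$ of degree $55$. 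Indeed, if your one-line reduction were valid it would apply verbatim to every primitive group and prove the full road closure conjecture, which the authors emphasise is open and apparently very hard. The paper's actual proof of (c) invokes Wielandt's theorem to reduce to the affine case and then analyses, for a Cayley graph on $V=C_p^2$, how a block $B$ can intersect each translation orbit ($p$-cycle): either $B$ is a union of $V$-orbits, or it meets each $V$-orbit in $1$, in $p$ points one per cycle, or in a whole cycle (an affine line); the last case needs a separate argument showing the omitted lines cannot all be concurrent. None of this is replaceable by an appeal to irreducibility alone.

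There are smaller gaps elsewhere. In (a), "many pairs through every vertex" does not imply connectivity (two large cliques have high minimum degree); the paper instead observes that if $(\Omega,O\setminus B)$ were disconnected then its complement $(\Omega,B)$ in $K_n$ would be connected, hence so would every translate of $B$, hence so would $O\setminus B$, a contradiction. In (b), your claim that a block "corresponds to a multiplicative coset structure" covers only the case where $B$ is a union of $C_p$-orbits (size divisible by $p$); you must also handle blocks meeting each Hamiltonian $p$-cycle in a single edge, where the complement contains a Hamiltonian path. Your plan for (d) is essentially the paper's (compute the pair stabilisers, read off the three candidate block systems --- same intersection, same symmetric difference, same union --- and check connectivity of each complement), and your identification of (d) as requiring a careful enumeration is fair.
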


\begin{proof}
(a) Suppose that $G$ is $2$-homogeneous. Then, for any $2$-set $S$, the graph
with edge set $SG$ is the complete graph.

Let $B$ be a block of imprimitivity for $G$ acting on $2$-sets (possibly a
singleton). If
$(\Omega,SG\setminus B)$ is disconnected, then $(\Omega,B)$ would be
connected, and so $(\Omega,B')$ would be connected for any translate $B'$
of $B$. But this is impossible, since $SG\setminus B$ is the union of all the
other translates of $B$.

\smallskip

(b) According to Burnside's Theorem, a transitive group $G$ of prime degree $p$
is either $2$-transitive (in which case part (a) applies), or is a subgroup of
$\agl(1,p)$. If a non-trivial block of imprimitivity
for $G$ on an orbit of $2$-sets has size divisible by $p$, then all its
translates contain connected circulant graphs, and so the complement is of
the original set is the edge set of a connected graph.
A block of size coprime to $p$ meets a $p$-cycle in one point,
and so its complement contains a path of length $p$ and is connected.

\smallskip

(c) Of course we may assume that $p$ is odd.

We begin with Wielandt's theorem \cite[Theorem 16.2]{wie},
which asserts that a primitive permutation group $G$ of degree $p^2$ satisfies
one of the following:
\begin{itemize}\itemsep0pt
\item[(i)] $G$ is affine;
\item[(ii)] $G\le S_p\wr S_2$;
\item[(iii)] $G$ is $2$-transitive.
\end{itemize}

Clearly type (ii) are non-basic (and hence fail the road closure property),
while type (iii) are basic and have the property. So we may assume that
$G$ is affine. Thus $G=p^2:H$, where
the linear group $H$ acts irreducibly on the vector space $V$ representing
the $p^2$. Irreducibility means simply that $H$ fixes no $1$-dimensional
subspace of $V$. Also, $G$ is basic if and only if $H$ is a primitive
linear group, which means that $H$ has no orbit of size $2$ on the set of
$1$-dimensional subspaces of $V$.

Now if $G$ is non-basic, then it does not have the road closure property.
So we may assume that
$G$ is basic, which (as above) means that the subgroup of $\pgl(2,p)$
induced by $H$ has no orbit of length $1$ or $2$ on the projective line.
All subgroups of $\pgl(2,p)$ are known, and we could simply examine individual
groups. Instead, the following argument aims at some generality.

We have to show that, for any orbital graph for $G$ (with edge set $O$), and
any block of imprimitivity $B$ for $G$ in its action on $O$, the graph with
edge set $O\setminus B$ is connected. Any edge $xy$ has a ``direction'', a
point on the line at infinity corresponding to the subspace of $V$ spanned by
$y-x$.

The graph with edge set $O$ is a Cayley graph for the translation group of $V$;
under this group, $V$ splits into orbits of size $p^2$, each of which is a
union of $p$ cycles of length $p$. We note that two subspaces of $V$
corresponding to different directions give a grid structure to $V$; if we
choose elements in these two subspaces, the resulting Cayley graph is the
Cartesian product of two cycles, and so is connected.

Suppose that $B$ is a union of $V$-orbits. There are two possibilities.
It may be that any two orbits whose edges have the same direction lie in the
same block. Then there are at least two directions outside $B$ realised by
$O$, and so $O\setminus B$ is connected. On the other hand, it may be that $B$
contains some but not all of the edges in each direction realised by $O$.
Then it also avoids at least one edge in each such direction, and again
$O\setminus B$ is connected.

So we may assume that $B$ is not a union of $V$-orbits. Now the intersection
of $B$ with a $V$-orbit is a block of imprimitivity for $V$. There are three
cases: the intersection has cardinality $1$; it has cardinality $p$ but
contains one edge from every cycle of the element of $V$ corresponding to the
direction of an edge; or it consists of a cycle of an element of $V$. In the
first two cases, we can choose $p-1$ edges in that direction forming a path.
Doing this in two different directions, we find the Cartesian product of
two paths, and is connected.

In the final case, the edges of $B$ in some fixed direction form a cycle of
an element of $V$, and so lie in a line of the affine plane. We can assume
that this is true for every direction. So $O\setminus B$ contains edges in
$p-1$ of the $p$ cycles in each possible direction. The only way to avoid
connectedness of $O\setminus B$ is that the omitted lines all pass through
the same point $x$, which thus has the property that $B$ is the set of all
edges containing $x$. But this is impossible. For if $xy$ is such an edge,
then the set of edges containing $y$ would also be a block $B'$; but
$B\cap B'=\{xy\}$, a contradiction.

\smallskip

(d) Let $G$ be $S_m$ acting on $k$-sets, with $m>2k$. Now a pair
$S$ of $k$-sets intersecting in $l$ points is stabilised by the direct product
of $S_l$, $S_{k-l}\wr S_2$, and $S_{m-2k+l}$. The overgroups in the symmetric
group are easily computed. We see that the possible blocks of imprimitivity
containing $S$ for $G$ acting on $SG$ consist of all pairs with the same
intersection, all pairs with the same symmetric difference, or all pairs with
the same union. In each of these cases, it is easy to see that the complement
of a block in the orbital graph is connected; the relevant set of pairs can
easily be bypassed.\qed
\end{proof}

\section{Computational results}\label{comput}

We have tested all primitive groups of degree up to $130$, and a number of 
groups of larger degree, and found no counterexample to our conjecture.

The algorithm checks the road closure condition in the simplest possible way.
Given a primitive group $G$, we do the following:
\begin{enumerate}\itemsep0pt
\item Check if $G$ is basic (the road closure fails if not).
\item Compute the orbits of $G$ acting on the set of $2$-element subsets.
\item For each orbit, compute the maximal blocks of imprimitivity for the
action of $G$ on this orbit; remove a block and check the remaining graph for
connectedness.
\end{enumerate}
We make a few comments on each step.

For the first step, we may make use of the \textsf{GAP} function
\texttt{ONanScottType} to exclude the non-basic groups. Unfortunately, this
does not work for affine primitive groups, since the function does not analyse
them further. So we had to write our own test for the non-basic property of
an affine group: build the possible Hamming graphs, ahd check for each union
of $G$-orbits on $2$-sets whether the corresponding graph is isomorphic to
a Hamming graph. One thing on our wish list for \textsf{GAP} is a test for
the basic property which works for affine groups!

The second step is straightforward.

For the third step, there is a \textsf{GAP} command to find all the blocks
of imprimitivity for a transitive permutation group containing a given point
of the domain. This command can take some time. It is known that the minimal
blocks of imprimitivity can be found in polynomial time~\cite{atkinson}; the
procedure for finding all blocks involves finding all the minimal blocks, and
for each such block, find all minimal blocks for the group acting on the
corresponding block system, and so on until we reach the system with a single
block.

This raises an interesting theoretical question.

\begin{qn}
Is there a polynomial upper bound in terms of $n$ for the number of maximal
blocks of imprimitivity (containing a given point) of a transitive permutation
group of degree~$n$?
\end{qn}

A special case of this question is the famous conjecture of Wall~\cite{wall},
according to which the number of maximal subgroups of a finite group is at
most the order of the group. (If a group $G$ has its regular action, then the
blocks of imprimitivity containing the identity are just the subgroups of $G$.)
Wall's conjecture is known to be false, but Liebeck \emph{et~al.}
\cite{lps_wall} found an upper bound of order $|G|^{3/2}$.

Some improvements to the program involve excluding groups dealt with by other
means such as those discussed above. We wrote a program along the lines just
described, and used it to find the basic groups failing the road closure
property up to degree $130$ and for several larger degrees. No counterexamples
to our conjecture were found. In the table, we give the degree, the number in
the list in \textsf{GAP} 4.7.4, and the name of the group.

\begin{table}[htbp]
\[
\begin{array}{|r|r|l|}
\hline
\hbox{Degree} & \hbox{Number} & \hbox{Group} \\
\hline
21 & 1 & \psl(3,2):2 \\
28 & 1 & \psl(3,2):2 \\
45 & 1,2,3 & S_6:2\hbox{ and subgroups} \\
52 & 1 & \psl(3,3):2 \\
55 & 3 & \psl(2,11):2 \\
66 & 1 & \psl(2,11):2 \\
105 & 1,\ldots,6 & \mathrm{Aut}(\psl(3,4))\hbox{and subgroups} \\
105 & 7 & S_8=\psl(4,2):2 \\
117 & 1 & \psl(3,3):2 \\
120 & 11 & S_8=\psl(4,2):2 \\
120 & 1 & S_7 \\
\hline
\end{array}
\]
\caption{\label{non-rc}Basic primitive groups without road closure property}
\end{table}

\section{Problems}

We start this section asking two of the most important questions prompted by this paper.
\begin{problem}
Is there any relation between being $k$-homogeneous and possessing the strong $k$-ut property?
\end{problem}

\begin{problem}
Is the road closure conjecture true?
\end{problem}

The classification of groups with $k$-id is almost finished, but there is still some cases to decide; probably, it is necessary to devlope more robust GAP code (more on that below).
\begin{problem}
Finish the classification of permutation groups that have the $3$-id property. The same for the $4$-id property. In particular, does
$\pgaml(2,32)$ have the $5$-id property? Does $M_{11}$ (degree~$12$) have the
$4$-id property?
\end{problem}

\begin{problem}
Is there a combinatorial condition on a permutation group which
is necessary and sufficient to the $k$-id property for $k>2$, analogous to the
road closure property for the $2$-id property?
\end{problem}

Recall that a group is said to be synchronizing if together with any singular map generates a constant. 

\begin{problem}
Let $G\le S_n$ be a primitive group and $t$ a non-invertible map. Is it true that the subsemigroup of $\langle G,t\rangle$ formed by its maps of minimum rank is generated by idempotents? This is trivially true for synchronizing groups; the question is what happens for the other primitive groups.   
\end{problem} 

As said in the introduction, the origins of this research are in two results, one proved by Howie to transformations on a set, and another similar proved by Erdos for transformations on a vector space. This similitude between the two semigroups is well known and studied, and in the context of this paper the following problem is very natural.

\begin{problem}
Classify the linear groups $G$ that together with any non-invertible linear transformation $t$ yield an idempotent generated semigroup:
\[	
\langle G,t\rangle \setminus G=\langle E\rangle . 
\]
\end{problem}

This paper closes the project started in \cite{ac_tams}. Now the next step is the following problem. 

\begin{problem}
Let $\Omega$ be a finite set. 
\begin{itemize}	
\item Classify the pairs $(G,I)$, where $G\le S_n$ and $I\subseteq \Omega$, such that the semigroup generated by  $G$ and any map with mage $I$ is regular. 
\item Classify the pairs $(G,I)$, where $G\le S_n$ and $I\subseteq \Omega$, such that the semigroup of singular maps generated by  $G$ and any map with mage $I$ is idempotent generated. 
\end{itemize} 
\end{problem}

To handle the undecided questions, and for general use, it would be convenient to have in GAP a number of new functions based on effective algorithms.  
\begin{problem}
\begin{enumerate}
\item Provide a command that finds if a $0$-Rees Matrix Semigroup is connected;
\item Provide a command that returns a given $0$-Rees Matrix Semigroup in Graham's normal form; 
\item  Given the two commands above, then it should be very easy (using Theorem \ref{gray}) to check if a $0$-Rees Matrix Semigroup is idempotent generated or not. 
\item Produce more efficient code to check if a permutation group has the $k$-id. Observe that with the code available we could not check if $M_{11}$ (degree $12$) has the $3$-id. 
\end{enumerate}
\end{problem}	

Regarding GAP functions to handle groups, we need very effective algorithms for the following:
\begin{problem}
\begin{enumerate}
\item check if a group has the [strong] $k$-ut property; 
\item find the sets $S\subseteq \Omega$ such that in the orbit of $S$ there is a transversal for every $|S|$-partition. 
\end{enumerate}
\end{problem}

\end{document}